\documentclass[11pt,a4paper]{article}
\usepackage[T1]{fontenc}
\usepackage[utf8]{inputenc}
\usepackage{amsmath,amssymb,amsthm,mathtools}

\usepackage{lmodern}

\usepackage[a4paper,margin=24mm]{geometry}
\usepackage{microtype}
\usepackage{footmisc}

\usepackage{booktabs,array,tabularx}
\usepackage{enumitem}
\usepackage{graphicx}
\usepackage{caption}
\usepackage{float}
\usepackage{needspace}
\usepackage{xcolor}

\numberwithin{equation}{section}
\usepackage[hypertexnames=false]{hyperref}
\usepackage[nameinlink,noabbrev]{cleveref}

\definecolor{CitationBlue}{HTML}{1F4E79}
\definecolor{ReferenceTeal}{HTML}{123B5D}

\hypersetup{
  colorlinks=true,
  citecolor=CitationBlue,
  linkcolor=ReferenceTeal,
  urlcolor=CitationBlue,
}

\newtheorem{theorem}{Theorem}[section]

\newtheorem{lemma}[theorem]{Lemma}
\newtheorem{corollary}[theorem]{Corollary}

\theoremstyle{remark}
\newtheorem{remark}[theorem]{Remark}

\theoremstyle{definition}
\newtheorem{example}[theorem]{Example}

\AddToHook{env/theorem/before}{\Needspace{3\baselineskip}}
\AddToHook{env/proposition/before}{\Needspace{3\baselineskip}}
\AddToHook{env/lemma/before}{\Needspace{3\baselineskip}}
\AddToHook{env/corollary/before}{\Needspace{3\baselineskip}}
\AddToHook{env/remark/before}{\Needspace{3\baselineskip}}
\AddToHook{env/example/before}{\Needspace{3\baselineskip}}

\floatstyle{ruled}
\newfloat{algorithm}{tbp}{loa}
\floatname{algorithm}{Algorithm}

\newcommand{\R}{\mathbb{R}}
\newcommand{\ip}[2]{\left\langle #1,#2\right\rangle}
\newcommand{\norm}[1]{\| #1\|}
\newcommand{\prox}{\operatorname{prox}}
\newcommand{\dom}{\operatorname{dom}}

\newcommand{\zer}{\operatorname{zer}}
\newcommand{\dist}{\operatorname{dist}}

\allowdisplaybreaks[2]
\setlist[enumerate]{
  leftmargin=*,
  itemsep=2pt,
  topsep=4pt,
  parsep=0pt
}

\AddToHook{cmd/normalsize/after}{%
  \setlength{\abovedisplayskip}{8pt plus 2pt minus 2pt}%
  \setlength{\belowdisplayskip}{8pt plus 2pt minus 2pt}%
  \setlength{\abovedisplayshortskip}{3pt plus 1pt}%
  \setlength{\belowdisplayshortskip}{5pt plus 1pt minus 1pt}%
}
\title{GRAAL for variational inequalities revisited:
  sharp \protect\\
  parameter range and adaptive step sizes
}

\author{%
  Santanu Soe\thanks{%
    Department of Mathematics, Indian Institute of Technology Madras,
    Chennai 600036, India; and School of Mathematics and Statistics,
    The University of Melbourne, Parkville, Victoria 3010, Australia.
    \newline
    Email:
    \href{mailto:ma22d002@smail.iitm.ac.in}%
      {\mbox{\nolinkurl{ma22d002@smail.iitm.ac.in}}};
    \href{mailto:santanu.soe@student.unimelb.edu.au}%
      {\mbox{\nolinkurl{santanu.soe@student.unimelb.edu.au}}}.%
  }
  \and
  Matthew K. Tam\thanks{%
    School of Mathematics and Statistics, The University of Melbourne,
    Parkville, Victoria 3010, Australia.
    \newline
    Email:
    \href{mailto:matthew.tam@unimelb.edu.au}%
      {\mbox{\nolinkurl{matthew.tam@unimelb.edu.au}}}.%
  }  
    \and
  V. Vetrivel\thanks{%
     Department of Mathematics, Indian Institute of Technology Madras,
    Chennai 600036, India.
    \newline
    Email:
    \href{mailto:vetri@iitm.ac.in}%
      {\mbox{\nolinkurl{vetri@iitm.ac.in}}}.%
  }  
}

\date{}

\begin{document}
\maketitle

\begin{abstract}
This paper revisits the Golden Ratio Algorithm (GRAAL) of Malitsky
(Math. Program. 184, 383--410, 2020) for monotone variational inequalities. The original analysis of GRAAL restricts the convex-combination parameter \(c\) to \(1<c\leq\varphi\), where \(\varphi=(1+\sqrt5)/2\) is the golden ratio. Through a new Lyapunov analysis, we show that this range of $c$ can be extended to \(1<c<3\). Further, a two-dimensional linear counterexample shows that this parameter range is sharp. In particular, for every \(c\geq3\) and every positive constant step size, the iterates of GRAAL can be unbounded. Building on this analysis, we propose a simpler version of the adaptive GRAAL (aGRAAL) that requires neither a line search nor a prescribed upper bound on the step sizes, and demonstrate its convergence for locally Lipschitz cost operators over the same parameter range. For both methods, we establish an \(O(1/N)\) ergodic gap bound and, under a residual-based error bound condition, prove the R-linear convergence of the adaptive iterates. Finally, numerical experiments are conducted to compare the proposed strategies with existing ones.
\end{abstract}

\noindent\textbf{Keywords:} mixed variational inequality; golden ratio
algorithm; adaptive step size; Lyapunov function; rate of convergence.

\section{Introduction}
\label{sec:introduction}
Let \(\mathbb H\) be a finite-dimensional real Hilbert space with
inner product \(\ip{\cdot}{\cdot}\) and induced norm
\(\norm{\cdot}\). In this paper, we consider the \emph{mixed variational inequality
problem} (MVIP):
\begin{equation}
\text{find }x^*\in\mathbb H\text{ such that }
\ip{F(x^*)}{x-x^*}+g(x)-g(x^*)\geq0
\quad\forall x\in\mathbb H.
\label{eq:monotone_vi}
\end{equation}
Throughout the paper, we assume that
\begin{enumerate}[label=\textnormal{(C.\arabic*)},ref=C.\arabic*]
\item\label{ass:operators}
\(g:\mathbb H\to(-\infty,+\infty]\) is a proper,
lower semicontinuous (lsc), and convex function. The operator
\(F:\overline{\dom g}\to\mathbb H\) is monotone, that is,
\[
\ip{F(x)-F(y)}{x-y}\geq0\qquad\forall x,y\in\overline{\dom g}.
\]
\item\label{ass:solution}
The solution set \(\mathcal S\) of \eqref{eq:monotone_vi} is nonempty.
\end{enumerate}
Additional assumptions on \(F\) will be stated when needed. The problem~\eqref{eq:monotone_vi} covers a wide range of models from optimisation, game theory,
and equilibrium analysis, as well as finite-dimensional discretisations
of variational inequalities in PDEs and
optimal control
\cite{FacchineiPang2003,kinderlehrer2000introduction,barbu1984optimal}. Here \(g\) typically represents constraints and/or nonsmooth terms,
while \(F\) need not be the gradient of a scalar function. We recall three
standard instances of \eqref{eq:monotone_vi} that motivate us to study this formulation.

\medskip
\textbf{Variational inequalities.}
Let \(C\subset\mathbb H\) be nonempty, closed, and convex, and take
\(g=\delta_C\), the indicator function of \(C\). Then
\eqref{eq:monotone_vi} reduces to
\[
\text{find }x^*\in C\text{ such that }
\ip{F(x^*)}{x-x^*}\geq0\qquad\forall x\in C.
\]
This is the classical variational inequality
problem~\cite{FacchineiPang2003}, which has been widely studied in the literature; see, for example,~\cite{patriksson1998nonlinear,solodov1999new,chan1982generalized}.

\medskip
\textbf{Composite minimisation.}
Let \(f:\mathbb H\to\R\) be convex and continuously differentiable. A point \(x^*\) solves
\[
\min_{x\in\mathbb H}\;f(x)+g(x)
\]
if and only if \(0\in\nabla f(x^*)+\partial g(x^*)\).
By the subgradient inequality, this inclusion is equivalent to
\eqref{eq:monotone_vi} with \(F=\nabla f\)
\cite{BauschkeCombettes2017}. For instance, the LASSO problem fits this formulation with  \(F(x)=A^*(Ax-b)\) and $g(x)=\gamma\|x\|_1$ ($\gamma>0$), where
\(A:\mathbb R^n\to\mathbb R^m\) is linear and \(b\in\mathbb R^m\).

\medskip
\textbf{Saddle point problems.}
Let \(g_i:\mathbb H_i\to(-\infty,+\infty]\), \(i=1,2\), be proper
lsc convex functions on finite-dimensional real Hilbert spaces. Let
\(\Phi:\mathbb H_1\times\mathbb H_2\to\R\) be continuously
differentiable, convex in its first argument, and concave in its second.
Consider
\begin{equation}
\min_{u\in\dom g_1}\;\max_{v\in\dom g_2}
g_1(u)+\Phi(u,v)-g_2(v).
\label{eq:saddle_problem}
\end{equation}
Here a solution is understood to be a saddle point. The saddle-point optimality conditions show that
\eqref{eq:saddle_problem} is equivalent to \eqref{eq:monotone_vi} with \(x=(u,v)\in\mathbb{H}=\mathbb{H}_1\times\mathbb{H}_2\),
\[
F(u,v)=\bigl(\nabla_u\Phi(u,v),-\nabla_v\Phi(u,v)\bigr)
\quad\text{and}\quad
g(u,v)=g_1(u)+g_2(v),
\]
see, for instance, \cite{nemirovski2004prox,BauschkeCombettes2017}.
The bilinear coupling \(\Phi(u,v)=\ip{Ku}{v}\), with \(K\) linear, is central to many primal--dual methods for convex optimisation
\cite{chambolle2011first,chambolle2016ergodic,chang2021golden}. Furthermore, methods for smooth nonbilinear couplings are also studied in
\cite{hamedani2021primal,chang2026convex}, and a decentralised
min--max setting is considered in \cite{malitsky2026first}.

When \(F\) is globally Lipschitz, popular methods for solving \eqref{eq:monotone_vi} include Tseng’s forward--backward--forward method~\cite{tseng2000modified} and forward--reflected--backward splitting~\cite{MalitskyTam2020}; see also \cite{cevher2021reflected,csetnek2019shadow,rieger2020backward} and the references therein. In most cases, the step sizes of these methods depend on a global Lipschitz constant to guarantee convergence of the iterates. However, computing such global constants can be expensive or unavailable (e.g., when $F$ is locally Lipschitz). Furthermore, since this constant measures the largest rate of change of \(F\) over its entire domain, the resulting step size can be overly conservative for iterates where \(F\) changes much more slowly. These observations motivate adaptive rules that select step sizes based on local information generated during each iteration \cite{malitsky2020golden,ou2026adaptive}. We focus on the golden ratio algorithm (GRAAL) and its adaptive
counterpart (aGRAAL) \cite[Algorithm 1]{malitsky2020golden}. Their common iteration reads as
\begin{equation}\label{eq:intro_graal}
\bar x^k=\frac{c-1}{c}x^k+\frac1c\bar x^{k-1},
\qquad
x^{k+1}=\prox_{\lambda_k g}
\bigl(\bar x^k-\lambda_kF(x^k)\bigr),
\end{equation}
where, for the fixed step size case (e.g., GRAAL), it is understood that
$\lambda_k\equiv\lambda$ for all $k$.
It is worth mentioning that the computational cost of \eqref{eq:intro_graal} is one new evaluation of \(F\) and one proximal evaluation per iteration. For \(c=\varphi:=(1+\sqrt5)/2\), Malitsky
\cite[Theorem~1]{malitsky2020golden} proved convergence of GRAAL when the step size satisfies \(0<\lambda\leq\varphi/(2L)\).
In this context, the golden ratio \(\varphi\) arises in the analysis due to the requirement that \(c-1-1/c\leq0\), which restricts \(c\) to \((1,\varphi]\). Subsequent work has been done to extend this range. For example, when \(g=\delta_C\) and \(c=2\), Alacaoglu et al.
\cite[Corollary~2]{alacaoglu2023beyond} proved convergence
of GRAAL and established an \(O(1/k)\) ergodic rate of convergence. Furthermore, Tran et al.~\cite[Theorem~6.2]{tran2026revisiting}
considered the resolvent version of \eqref{eq:intro_graal} for
the inclusion \(0\in F(x)+T(x)\), where \(T\) is maximally
\(3\)-cyclically monotone. It is known that the subdifferential of a proper lsc convex function is maximally monotone and cyclically monotone \cite[Theorem 22.18]{BauschkeCombettes2017}, and therefore maximally \(3\)-cyclically monotone. Thus, this includes the proximal case \(T=\partial g\). With suitable constant step sizes,
they established an \(O(k^{-1/2})\) best-iterate residual rate
for \(1<c<1+\sqrt3\). Interested readers are referred to \cite{chang2022grpda,chang2025splitting,chang2026convex, soe2026golden} for related parameter relaxations for structured primal--dual methods in both fixed and adaptive cases.

When $F$ is locally Lipschitz, aGRAAL chooses its step size by
\begin{equation}\label{eq:intro_agraal_step_size}
\lambda_k=\min\left\{
    \rho\lambda_{k-1},\;
    \frac{c\vartheta_{k-1}}{4\lambda_{k-1}}
    \frac{\norm{x^k-x^{k-1}}^2}
         {\norm{F(x^k)-F(x^{k-1})}^2},\;
    \lambda_{\max}
\right\},
\qquad
\vartheta_k=\frac{c\lambda_k}{\lambda_{k-1}},
\end{equation}
where \(1<c\leq\varphi\), \(\rho=1/c+1/c^2\),
\(\vartheta_0=1\), and \(\lambda_{\max}>0\). For $1<c<\varphi$, we have $\rho>1$, so the rule allows the step sizes to increase or decrease without a line search. The choice of $\lambda_{\max}$, however, requires some care. On the one hand, if it is too small, it may restrict the step sizes. On the other hand, a large value enlarges the constant in the available gap estimate. Specifically, aGRAAL has an \(O(\lambda_{\max}L^2/k)\) ergodic gap rate \cite[Section 4]{alacaoglu2023beyond}, where \(L\) is a Lipschitz constant of \(F\) on the bounded region containing the iterates.
Thus, increasing \(\lambda_{\max}\) can weaken the estimate even when the
upper bound does not affect the computed steps. Taking
\(\lambda_{\max}\) proportional to \(1/L\) gives an \(O(L/k)\) rate, but requires information about \(L\). Alacaoglu et al.~\cite[Algorithm~1]{alacaoglu2023beyond} addressed this issue by removing the upper bound \(\lambda_{\max}\) from \eqref{eq:intro_agraal_step_size} for the case when $g=\delta_C$. With a suitably chosen initialisation, they established an $\mathcal{O}(L/k)$ ergodic gap rate \cite[Theorem 5]{alacaoglu2023beyond} by showing that the sum of the step sizes grows at least linearly with the iteration count. They did not, however, prove convergence of the iterates since establishing a positive lower bound for $\vartheta_k$ in~\eqref{eq:intro_agraal_step_size} remains difficult.

The above discussion suggests two questions regarding \eqref{eq:intro_graal}--\eqref{eq:intro_agraal_step_size}. 
\begin{enumerate}[label=\textnormal{(\roman*)}]
\item How far can the range of $c$ be extended such that the iterates of GRAAL converge?
\item Can a simple adaptive rule give convergence of the sequences generated by \eqref{eq:intro_graal} without requiring the rule to include an absolute upper bound on the step sizes?
\end{enumerate}
By ``simple adaptive rule'', we mean a rule in which the variable step size approximates the inverse of the local Lipschitz constant of $F$, with no requirement that the step sizes be monotone. In this work, we address both of the aforementioned questions through an alternative convergence analysis of GRAAL, based on a Lyapunov function that covers both the fixed and the adaptive cases.

\subsection{Contributions}\label{sec:contributions}
To answer the first question, we revisit the standard proximal GRAAL
iteration~\eqref{eq:intro_graal} and prove convergence for every $c\in(1,3)$, assuming that $F$ is $L$-Lipschitz and $0<\lambda L<\mu(c)$, with $\mu(c)$ given by~\eqref{eq:fixed_final_step_size}. The corresponding algorithm with the extended parameter range is presented
in Algorithm~\ref{Alg:fixed}. We further show that the upper threshold is
sharp: for every $c\geq3$ and every positive constant step size, we
construct a counterexample in $\R^2$ whose iterates are unbounded. Moreover, we establish an $O(1/N)$ ergodic gap bound, and when \(g=\delta_C\), we express this bound in terms of the initial distance to a solution and then compare it with the estimate obtained in~\cite[Corollary~2]{alacaoglu2023beyond}.

\medskip
To answer the second question, we propose an adaptive version of
Algorithm~\ref{Alg:fixed} for locally Lipschitz \(F\) defined on
\(\overline{\dom g}\). The adaptive method is presented in
Algorithm~\ref{Alg:general}, which uses a simpler step size rule (see \eqref{eq:local_step_size})
than that of aGRAAL. More specifically, the second candidate
in~\eqref{eq:local_step_size} approximates the inverse of
a local Lipschitz constant of \(F\), whereas the corresponding
candidate in aGRAAL also depends on an earlier step size; see
\cite[Eq.~(21)]{malitsky2020golden}. As in aGRAAL, the first candidate is designed to allow the step size to grow, so the method can take larger steps where the curvature is flat. The adaptive method (Algorithm \ref{Alg:general}) admits the same range of $c$ as  Algorithm~\ref{Alg:fixed} and  requires neither a line search nor
a prescribed upper bound on the step sizes. We prove convergence of Algorithm~\ref{Alg:general} and establish an \(O(1/N)\) ergodic gap bound, together with R-linear convergence under an additional residual error bound condition. One concrete example of Algorithm~\ref{Alg:general} is
\begin{equation}
\boxed{\begin{aligned}
\lambda_k&=\min\left\{\frac54\lambda_{k-1},
\frac{11}{40}\frac{\norm{x^k-x^{k-1}}}
{\norm{F(x^k)-F(x^{k-1})}}\right\},\\[1mm]
\bar x^k&=\frac{x^k+\bar x^{k-1}}2,\\[1mm]
x^{k+1}&=\prox_{\lambda_k g}\bigl(\bar x^k-\lambda_kF(x^k)\bigr).
\end{aligned}}
\label{eq:intro_adaptive_examples}
\end{equation}
Here \(x^0,x^1\in\dom g\) and \(\lambda_0>0\), and we set
\(\bar x^0=x^1\).

\medskip
The structure of the paper is as follows.

\medskip
\noindent
After recalling the necessary preliminaries, Section~\ref{sec:fixed} proves
the convergence of Algorithm~\ref{Alg:fixed} and presents the counterexample. Section~\ref{sec:rate} establishes the ergodic gap bound and examines its constant in the projection case (i.e., $g=\delta_C$), while Section~\ref{sec:adaptive_algorithm_local_case} presents the adaptive method and proves its convergence, its ergodic gap bound, and its R-linear rate under a natural error bound. In Section~\ref{sec:structured_vi_applications} we derive an adaptive algorithm for structured variational inequalities by applying
Algorithm~\ref{Alg:general} to a suitable product-space formulation.
Finally, Section~\ref{sec:numerical_results} reports numerical experiments
comparing our proposed strategies with existing ones.

\subsection{Preliminaries}
\label{sec:preliminaries}

Let \(g:\mathbb H\to(-\infty,+\infty]\) be a proper lsc convex function. We write \(\dom g:=\{x\in\mathbb H:g(x)<+\infty\}\)
for the \emph{domain} of $g$. The \emph{convex subdifferential} at \(x\in\dom g\) is defined by
\[
\partial g(x):=\{u\in\mathbb H:
g(y)\geq g(x)+\ip{u}{y-x}\ \forall y\in\mathbb H\},
\]
and \(\partial g(x):=\varnothing\) when \(x\notin\dom g\).
The operator \(\partial g\) is \emph{maximally monotone}, and its \emph{graph} is closed in \(\mathbb H\times\mathbb H\). We refer the reader to \cite{BauschkeCombettes2017} for
these and related facts. By the definition of the subdifferential, \eqref{eq:monotone_vi} is equivalent to
\(0\in F(x^*)+\partial g(x^*)\). Thus,
\(\mathcal S=\zer(F+\partial g)\), where for a set-valued operator \(A\),
we write \(\zer A:=\{x\in\mathbb H:0\in A(x)\}\). Given \(t>0\), the \emph{proximal mapping} of \(g\) with parameter \(t\) is
defined by
\[
\prox_{tg}(v):=
\operatorname*{argmin}_{x\in\mathbb H}
\left\{g(x)+\frac{1}{2t}\norm{x-v}^{2}\right\}.
\]
Under Assumption~\eqref{ass:operators}, \(\prox_{tg}\) is single-valued
on \(\mathbb H\) and \emph{firmly nonexpansive}. Its optimality condition reads as
\begin{equation}
p=\prox_{tg}(v)
\quad\Longleftrightarrow\quad
\ip{p-v}{y-p}
\geq t\bigl(g(p)-g(y)\bigr)
\quad\forall\,y\in\dom g.
\label{eq:prox_inequality}
\end{equation}
Note that \eqref{eq:prox_inequality} also yields
\(\prox_{tg}(v)\in\dom\partial g\subseteq\dom g\) for all
\(v\in\mathbb H\) and \(t>0\). 

For a nonempty set
\(C\subset\mathbb H\), \(\overline C\) denotes its closure and
\(\dist(x,C):=\inf_{y\in C}\norm{x-y}\) the distance from \(x\in\mathbb H\)
to \(C\). If \(C\) is closed and convex, its \emph{metric projection} \(P_C\)
is single-valued and \(\dist(x,C)=\norm{x-P_Cx}\). Moreover, the
\emph{indicator function} \(\delta_C\), which takes the value \(0\) on
\(C\) and \(+\infty\) elsewhere, satisfies
\(P_C=\prox_{t\delta_C}\) for every \(t>0\).

Given \(x,z\in\dom g\), define
\begin{equation*}
\Psi_z(x)
=
\ip{F(z)}{x-z}+g(x)-g(z).
\end{equation*}
Given \(z\in\dom g\), the map \(x\mapsto\Psi_z(x)\) is convex, and
\(\Psi_z(x)\geq0\) for every \(x\in\dom g\) whenever \(z\in\mathcal S\). We conclude this section with the following useful lemmas.

\begin{lemma}[{\protect\cite[Lemma~1]{malitsky2020golden}}]
\label{lem:useful_1}
Let \((u^k)\) be a bounded sequence in \(\mathbb H\). If
\(\lim_{k\to\infty}\norm{u^k-u}\) exists for every cluster
point \(u\) of \((u^k)\), then \((u^k)\) converges.
\end{lemma}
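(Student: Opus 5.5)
The plan is to use compactness of bounded sequences in the finite-dimensional space \(\mathbb H\) and to show that any two cluster points must coincide. Since \((u^k)\) is bounded, it has at least one cluster point. It therefore suffices to show that the set of cluster points is a singleton. A bounded sequence in \(\mathbb H\) with a unique cluster point converges to it: if not, some subsequence would stay at distance at least \(\varepsilon>0\) from that point, and by boundedness this subsequence would have its own cluster point, different from the unique one, which is a contradiction.

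So let \(u\) and \(v\) be cluster points of \((u^k)\). By hypothesis, the limits \(a:=\lim_{k}\norm{u^k-u}\) and \(b:=\lim_k\norm{u^k-v}\) exist. The key step is the polarization identity
\[
\norm{u^k-v}^2-\norm{u^k-u}^2 = 2\ip{u^k}{u-v}+\norm{v}^2-\norm{u}^2 ,
\]
whose left-hand side converges to \(b^2-a^2\). Hence \(\ip{u^k}{u-v}\) converges to some limit \(\ell\).

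We next evaluate \(\ell\) along subsequences. Along a subsequence with \(u^{k_j}\to u\) we get \(\ell=\ip{u}{u-v}\), and along one with \(u^{k_i}\to v\) we get \(\ell=\ip{v}{u-v}\). Subtracting gives \(\norm{u-v}^2=0\), so \(u=v\).

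There is no real obstacle here. The only point needing care is the reduction "unique cluster point plus boundedness implies convergence", which uses Bolzano–Weierstrass in finite dimensions. This is available because \(\mathbb H\) is assumed finite-dimensional throughout.
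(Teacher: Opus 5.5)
Your proof is correct: the polarization identity, the convergence of \(\ip{u^k}{u-v}\), and the evaluation of its limit along the two subsequences are all sound, and so is the reduction from ``unique cluster point'' to convergence. The paper gives no proof of this lemma; it simply quotes it from Malitsky. In the finite-dimensional setting, though, there is a one-line argument that your route bypasses. By boundedness, pick a cluster point \(u\) with \(u^{k_j}\to u\). By hypothesis \(\lim_{k}\norm{u^k-u}\) exists, so it must equal \(\lim_j\norm{u^{k_j}-u}=0\), which means \(u^k\to u\). This uses the hypothesis at a single cluster point only. It needs neither your uniqueness-of-cluster-points step nor the ``unique cluster point implies convergence'' reduction. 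What your Opial-type argument buys is robustness. It only uses that \(\ip{u^{k_j}}{w}\) converges along subsequences, so it carries over to infinite-dimensional Hilbert spaces with \emph{weak} cluster points, yielding weak convergence, with weak sequential compactness replacing Bolzano--Weierstrass. There the one-line argument breaks down, because \(u^{k_j}\rightharpoonup u\) does not force \(\norm{u^{k_j}-u}\to0\).
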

\begin{lemma}
\label{lem:norm_identities}
For any \(p,q,r\in\mathbb H\) and \(\alpha\in\R\),
\begin{align}
\ip{p-q}{p-r}
&=
\frac12\norm{p-q}^{2}
+\frac12\norm{p-r}^{2}
-\frac12\norm{q-r}^{2},
\label{eq:three_point_identity}\\
\norm{\alpha p+(1-\alpha)q}^{2}
&=
\alpha\norm{p}^{2}
+(1-\alpha)\norm{q}^{2}
-\alpha(1-\alpha)\norm{p-q}^{2}.
\label{eq:convex_combination_identity}
\end{align}
\end{lemma}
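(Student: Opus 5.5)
Both identities follow by expanding squared norms through bilinearity and symmetry of the inner product, so the plan is a direct algebraic verification of each, with no earlier results needed.

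For \eqref{eq:three_point_identity}, set \(a:=p-q\) and \(b:=p-r\). Then \(q-r=b-a\), and
\[
\norm{q-r}^2=\norm{b-a}^2=\norm{a}^2+\norm{b}^2-2\ip{a}{b}.
\]
Solving this for \(\ip{a}{b}\) gives
\[
\ip{p-q}{p-r}=\tfrac12\norm{p-q}^2+\tfrac12\norm{p-r}^2-\tfrac12\norm{q-r}^2,
\]
which is the claim.

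For \eqref{eq:convex_combination_identity}, first expand the left-hand side:
\[
\norm{\alpha p+(1-\alpha)q}^2=\alpha^2\norm{p}^2+(1-\alpha)^2\norm{q}^2+2\alpha(1-\alpha)\ip{p}{q}.
\]
Next, write \(2\ip{p}{q}=\norm{p}^2+\norm{q}^2-\norm{p-q}^2\) and substitute. The coefficient of \(\norm{p}^2\) becomes \(\alpha^2+\alpha(1-\alpha)=\alpha\). Likewise, the coefficient of \(\norm{q}^2\) becomes \((1-\alpha)^2+\alpha(1-\alpha)=1-\alpha\). The remaining term is \(-\alpha(1-\alpha)\norm{p-q}^2\), so the identity holds for every real \(\alpha\); no convexity restriction on \(\alpha\) is needed.

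Neither computation poses a real obstacle. The only point needing care is collecting the coefficients correctly in the second identity.
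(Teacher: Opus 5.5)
Your proposal is correct: both expansions are carried out accurately, and your remark that the second identity holds for every real \(\alpha\) matches the lemma as stated. The paper gives no proof and treats the lemma as a standard fact, so your direct expansion via bilinearity of the inner product is exactly the routine verification it takes for granted.
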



\section{Golden ratio algorithm with sharp parameter range}
\label{sec:fixed}
\label{sec:master}
Before giving the proposed algorithm, it is convenient to define
\begin{equation}\mu(c):=
\begin{cases}
\dfrac{c(2c+2-c^2)}{2(c+1)},&1<c\leq1+\sqrt2,\\[2mm]
\dfrac{c(3-c)}{2(c-1)^2},&1+\sqrt2<c<3.
\end{cases}
\label{eq:fixed_final_step_size}
\end{equation}
\begin{algorithm}[H]
\caption{Extended GRAAL (E-GRAAL) for~\eqref{eq:monotone_vi}.}\label{Alg:fixed}
Choose \(x^0\in\dom g\), \(1<c<3\), and \(0<\lambda <\mu(c)/L\), where $L>0$ is the Lipschitz constant of $F$.
Set \(\bar x^{-1}=x^0\). For \(k\geq0\), compute
\begin{subequations}\label{eq:GRAAL_iterations}
\begin{align}
\bar x^k&=\frac{c-1}{c}x^k+\frac1c\bar x^{k-1},
\label{eq:convex_combination}\\
x^{k+1}&=\prox_{\lambda g}\bigl(\bar x^k-\lambda F(x^k)\bigr).
\label{eq:fixed_proximal_update}
\end{align}
\end{subequations}
\end{algorithm}

\noindent
For $\eta\ge0$, set \(\beta:=1+\frac1c-\eta\) and, for \(z\in\mathbb H\), define
\begin{equation*}
\mathcal E_k(z):=
\frac c{c-1}\norm{\bar x^k-z}^2
+\eta\norm{x^k-\bar x^{k-1}}^2
+\lambda L\norm{x^k-x^{k-1}}^2,\qquad k\geq1,
\end{equation*}
and, whenever \(\lambda<\frac{\beta}{2L}\), define
\begin{equation}
\Theta:=
\frac{\beta(\eta+2/c)-1-2\lambda L(\eta+\beta/c^2)}
{\beta-2\lambda L}.
\label{eq:descent_coefficient}
\end{equation}
\begin{remark}
The original analysis of GRAAL expands the inner product in
\cite[Eq.~(13)]{malitsky2020golden} by substituting the identity
$x^k-\bar x^{k-1}=c(x^k-\bar x^k)$, and it is precisely this substitution
that forces the constraint $c-1-1/c\leq 0$. In this work, we take a different approach. First, applying the identity
\begin{equation*}
x^{k+1}-x^k
=\bigl(x^{k+1}-\bar x^k\bigr)-\tfrac{1}{c}\bigl(x^k-\bar x^{k-1}\bigr),
\end{equation*}
from \eqref{eq:convex_combination}, we estimate the inner product
$\ip{x^k-\bar x^{k-1}}{x^{k+1}-x^k}$. Furthermore, the inner product
involving $F$, given in~\eqref{eq:ineq_4}, is handled in
exactly the same manner, together with the global Lipschitz estimate
of $F$. Second, we bound the resulting inner product \eqref{eq:weighted_young_ineq}
by Young's inequality such that a Fej\'er term $\eta\norm{x^k-\bar x^{k-1}}^2$, for some $\eta\ge0$, appears in the Lyapunov energy at iterations $k$ and $k+1$. Consequently, descent hinges on the joint choice of $\eta$ and $\lambda$ rather than on the condition $c-1-\frac1c\le0$, and as a by-product we get the restriction on $c$; see
Appendix~\ref{app:E-GRAAL_parameter_calculation} for further details.
\end{remark}

\begin{lemma}
\label{lem:fixed_energy}
Suppose that \(F:\dom g\to\mathbb{H}\) is \(L\)-Lipschitz for some \(L>0\), and conditions~(\ref{ass:operators})--(\ref{ass:solution}) hold. Let $(x^k)$ and $(\bar{x}^k)$ be generated by Algorithm~\ref{Alg:fixed}, and suppose that $\lambda < \frac{\beta}{2L}$. Then, for every \(z\in\dom g\)
and \(k\geq1\),
\begin{equation}
\mathcal E_{k+1}(z)+2\lambda\Psi_z(x^k)
+\Theta\norm{x^k-\bar x^{k-1}}^2
\leq\mathcal E_k(z).
\label{eq:fixed_descent_expanded}
\end{equation}
\end{lemma}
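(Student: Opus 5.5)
The plan is to combine two consecutive proximal optimality conditions \eqref{eq:prox_inequality}, turn the resulting inner products into norms using Lemma~\ref{lem:norm_identities}, and then absorb the single remaining cross term by a weighted Young inequality. Throughout, write \(a:=x^k-\bar x^{k-1}\) and \(b:=x^{k+1}-\bar x^k\).

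\textbf{Step 1: the two prox inequalities.} First apply \eqref{eq:prox_inequality} to \(x^{k+1}=\prox_{\lambda g}(\bar x^k-\lambda F(x^k))\) with test point \(y=z\). Then apply it to \(x^{k}=\prox_{\lambda g}(\bar x^{k-1}-\lambda F(x^{k-1}))\) with \(y=x^{k+1}\); this is valid for \(k\ge1\). Adding the two, the terms \(g(x^{k+1})\) cancel, and the result is
\[
\ip{b}{z-x^{k+1}}+\ip{a}{x^{k+1}-x^k}
+\lambda\ip{F(x^k)}{x^k-z}
+\lambda\ip{F(x^k)-F(x^{k-1})}{x^{k}-x^{k+1}}
\ge \lambda\bigl(g(x^k)-g(z)\bigr).
\]
By monotonicity, \(\ip{F(x^k)}{x^k-z}\ge\ip{F(z)}{x^k-z}\). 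This produces \(\lambda\Psi_z(x^k)\) on the right-hand side, and the inequality is then multiplied by \(2\).

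\textbf{Step 2: converting inner products to norms.}
\begin{itemize}
\item The first term is rewritten with \eqref{eq:three_point_identity}:
\[
2\ip{b}{z-x^{k+1}}=\norm{\bar x^k-z}^2-\norm{x^{k+1}-z}^2-\norm{b}^2.
\]
\item To eliminate \(\norm{x^{k+1}-z}^2\), apply \eqref{eq:convex_combination_identity} to \(\bar x^{k+1}=\frac{c-1}{c}x^{k+1}+\frac1c\bar x^k\). This gives
\[
\norm{x^{k+1}-z}^2=\tfrac{c}{c-1}\norm{\bar x^{k+1}-z}^2-\tfrac1{c-1}\norm{\bar x^k-z}^2+\tfrac1c\norm{b}^2 .
\]
Hence the first term telescopes into \(\frac{c}{c-1}\norm{\bar x^k-z}^2-\frac{c}{c-1}\norm{\bar x^{k+1}-z}^2-(1+\frac1c)\norm b^2\).
\item For the second term, use the identity from the Remark, \(x^{k+1}-x^k=b-\frac1c a\). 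This gives \(2\ip{a}{x^{k+1}-x^k}=2\ip ab-\frac2c\norm a^2\).
\item For the Lipschitz term, use Cauchy--Schwarz and Young:
\[
2\lambda\ip{F(x^k)-F(x^{k-1})}{x^k-x^{k+1}}\le\lambda L\norm{x^k-x^{k-1}}^2+\lambda L\norm{x^{k+1}-x^k}^2 .
\]
The first piece is exactly the last term of \(\mathcal E_k(z)\). I split the second piece as \(2\lambda L\norm{x^{k+1}-x^k}^2-\lambda L\norm{x^{k+1}-x^k}^2\). The negative copy supplies the term \(\lambda L\norm{x^{k+1}-x^k}^2\) of \(\mathcal E_{k+1}(z)\). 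The positive copy is expanded as \(2\lambda L\norm{b-\frac1ca}^2\).
\end{itemize}

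\textbf{Step 3: the quadratic form in \((a,b)\).} After these substitutions, everything that is not already part of \(\mathcal E_k\) or \(\mathcal E_{k+1}\) is a quadratic form in \((a,b)\):
\[
-\Bigl(1+\tfrac1c-2\lambda L\Bigr)\norm b^2+2\Bigl(1-\tfrac{2\lambda L}{c}\Bigr)\ip ab-\Bigl(\tfrac2c-\tfrac{2\lambda L}{c^2}\Bigr)\norm a^2 .
\]
I need to bound this by \(-\eta\norm b^2+\eta\norm a^2-\Theta\norm a^2\). The term \(\eta\norm b^2=\eta\norm{x^{k+1}-\bar x^k}^2\) belongs to \(\mathcal E_{k+1}(z)\), and \(\eta\norm a^2\) belongs to \(\mathcal E_k(z)\).

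Move \(\eta\norm b^2\) to the energy side. The remaining coefficient of \(-\norm b^2\) is \(\beta-2\lambda L>0\), which holds by the assumption \(\lambda<\beta/(2L)\). Then apply Young's inequality with this weight:
\[
2\Bigl(1-\tfrac{2\lambda L}{c}\Bigr)\ip ab\le(\beta-2\lambda L)\norm b^2+\frac{(1-2\lambda L/c)^2}{\beta-2\lambda L}\norm a^2 .
\]
This is the step I expect to be the main obstacle. It requires checking algebraically that
\[
\tfrac2c-\tfrac{2\lambda L}{c^2}+\eta-\frac{(1-2\lambda L/c)^2}{\beta-2\lambda L}
\]
simplifies exactly to \(\Theta\) in \eqref{eq:descent_coefficient}. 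To do this, put everything over the common denominator \(\beta-2\lambda L\), expand, and use \(\beta=1+\frac1c-\eta\) to cancel the \(\lambda^2L^2\) terms. If the terms do not cancel exactly, the Young weight or the way the Lipschitz term was split in Step 2 must be adjusted until they do.

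Collecting all terms then yields \eqref{eq:fixed_descent_expanded}.
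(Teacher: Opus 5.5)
Your proposal is correct and follows essentially the same route as the paper: the same pair of prox inequalities, the identity $x^{k+1}-x^k=(x^{k+1}-\bar x^k)-\frac1c(x^k-\bar x^{k-1})$ used both for the cross term and to expand the extra $2\lambda L\norm{x^{k+1}-x^k}^2$, and Young's inequality with weight $\beta-2\lambda L$. Two small points: in your Step~1 display the monotone term should be $\lambda\ip{F(x^k)}{z-x^k}$ rather than $\lambda\ip{F(x^k)}{x^k-z}$, and the final algebra closes with no adjustment needed, because the $\lambda^2L^2$ (and $\lambda L/c$) terms cancel identically without invoking $\beta=1+\frac1c-\eta$, which leaves exactly the numerator $\beta(\eta+2/c)-1-2\lambda L(\eta+\beta/c^2)$ of $\Theta$.
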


\begin{proof}
Following the initial steps of the GRAAL analysis
\cite[Theorem~1]{malitsky2020golden}, we apply
\eqref{eq:prox_inequality} at steps \(k\) and \(k-1\), with \(z\) and \(x^{k+1}\), respectively, to obtain
\begin{align}
\ip{x^{k+1}-\bar x^k}{z-x^{k+1}}
+\lambda\ip{F(x^k)}{z-x^{k+1}}
&\geq\lambda\bigl(g(x^{k+1})-g(z)\bigr),
\label{eq:prox_ineq_z}\\
\ip{x^k-\bar x^{k-1}}{x^{k+1}-x^k}
+\lambda\ip{F(x^{k-1})}{x^{k+1}-x^k}
&\geq\lambda\bigl(g(x^k)-g(x^{k+1})\bigr).
\label{eq:prox_ineq_new}
\end{align}
Adding \eqref{eq:prox_ineq_z} and \eqref{eq:prox_ineq_new} followed by
using monotonicity of \(F\) in the form
\(\ip{F(x^k)}{x^k-z}\geq\ip{F(z)}{x^k-z}\), we obtain
\begin{multline}\label{eq:ineq_1}
\lambda\Psi_z(x^k)
\leq\ip{x^{k+1}-\bar x^k}{z-x^{k+1}}
+\ip{x^k-\bar x^{k-1}}{x^{k+1}-x^k}\\
+\lambda\ip{F(x^k)-F(x^{k-1})}{x^k-x^{k+1}}.
\end{multline}
From \eqref{eq:convex_combination}, we have
\(x^{k+1}-z=\frac{c}{c-1}(\bar x^{k+1}-z)-\frac1{c-1}(\bar x^k-z)\). Thus using \eqref{eq:convex_combination_identity} yields
\begin{equation}
\norm{x^{k+1}-z}^2
=\frac c{c-1}\norm{\bar x^{k+1}-z}^2
-\frac1{c-1}\norm{\bar x^k-z}^2
+\frac1c\norm{x^{k+1}-\bar x^k}^2.
\label{eq:identity_1}
\end{equation}
Furthermore, \eqref{eq:convex_combination} gives \(x^{k+1}-x^k=(x^{k+1}-\bar x^k)-\frac1c(x^k-\bar x^{k-1})\). Using this fact together with~\eqref{eq:three_point_identity},~\eqref{eq:convex_combination_identity}~and~\eqref{eq:identity_1}, we have
\begin{align}
2\ip{x^{k+1}-\bar x^k}{z-x^{k+1}}
&=\norm{\bar x^k-z}^2-\norm{x^{k+1}-z}^2
-\norm{x^{k+1}-\bar x^k}^2 \notag\\
&=\frac c{c-1}\left(
\norm{\bar x^k-z}^2-\norm{\bar x^{k+1}-z}^2\right)
-\left(1+\frac1c\right)\norm{x^{k+1}-\bar x^k}^2,
\label{eq:ineq_2}\\
2\ip{x^k-\bar x^{k-1}}{x^{k+1}-x^k}
&=2\ip{x^k-\bar x^{k-1}}{x^{k+1}-\bar x^k}
-\frac2c\norm{x^k-\bar x^{k-1}}^2.
\label{eq:ineq_3}
\end{align}
Substituting \eqref{eq:ineq_2} and
\eqref{eq:ineq_3} into \eqref{eq:ineq_1} gives
\begin{multline}
2\lambda\Psi_z(x^k)+\frac c{c-1}\norm{\bar x^{k+1}-z}^2
\leq\frac c{c-1}\norm{\bar x^k-z}^2
-\left(1+\frac1c\right)\norm{x^{k+1}-\bar x^k}^2\\+2\ip{x^{k+1}-\bar x^k}{x^k-\bar x^{k-1}}
-\frac2c\norm{x^k-\bar x^{k-1}}^2
+2\lambda\ip{F(x^k)-F(x^{k-1})}{x^k-x^{k+1}}.
\label{eq:ineq_4}
\end{multline}
By Lipschitz continuity of $F$ and Young's inequality, the last term in
\eqref{eq:ineq_4} can be bounded as
\begin{equation*}
2\lambda\ip{F(x^k)-F(x^{k-1})}{x^k-x^{k+1}}
\leq\lambda L\left(\norm{x^k-x^{k-1}}^2+\norm{x^{k+1}-x^k}^2\right).
\end{equation*}
Inserting this estimate into \eqref{eq:ineq_4} and adding
\(\lambda L\norm{x^{k+1}-x^k}^2\) to both sides followed by using the identity
\(x^{k+1}-x^k=(x^{k+1}-\bar x^k)-\frac1c(x^k-\bar x^{k-1})\) yields
\begin{multline}
\frac c{c-1}\norm{\bar x^{k+1}-z}^2
+\lambda L\norm{x^{k+1}-x^k}^2+2\lambda\Psi_z(x^k)
\leq\frac c{c-1}\norm{\bar x^k-z}^2
+\lambda L\norm{x^k-x^{k-1}}^2\\
-\left(1+\frac1c-2\lambda L\right)
\norm{x^{k+1}-\bar x^k}^2
+2\left(1-\frac{2\lambda L}{c}\right)
\ip{x^{k+1}-\bar x^k}{x^k-\bar x^{k-1}}\\
-\left(\frac2c-\frac{2\lambda L}{c^2}\right)
\norm{x^k-\bar x^{k-1}}^2.
\label{eq:ineq_5}
\end{multline}
Denote \(\tau:=\beta-2\lambda L>0\). To estimate the inner product in \eqref{eq:ineq_5}, we use Young's inequality to obtain
\begin{align}
2\left(1-\frac{2\lambda L}{c}\right)
\ip{x^{k+1}-\bar x^k}{x^k-\bar x^{k-1}}
&\leq\tau\norm{x^{k+1}-\bar x^k}^2+\frac{(1-2\lambda L/c)^2}{\tau}
\norm{x^k-\bar x^{k-1}}^2.
\label{eq:weighted_young_ineq}
\end{align}
By combining \eqref{eq:weighted_young_ineq} with \eqref{eq:ineq_5}, we obtain
\begin{multline}
\frac c{c-1}\norm{\bar x^{k+1}-z}^2
+\lambda L\norm{x^{k+1}-x^k}^2+2\lambda\Psi_z(x^k)
\leq\frac c{c-1}\norm{\bar x^k-z}^2
+\lambda L\norm{x^k-x^{k-1}}^2\\
+\left(\tau-1-\frac1c+2\lambda L\right)
\norm{x^{k+1}-\bar x^k}^2
+\left(\frac{(1-2\lambda L/c)^2}{\tau}-\frac2c+\frac{2\lambda L}{c^2}\right)
\norm{x^k-\bar x^{k-1}}^2.
\label{eq:ineq_5_1}
\end{multline}
Since \(\tau=1+\frac1c-\eta-2\lambda L\), we have $\tau-1-\frac1c+2\lambda L=-\eta.$ Recalling $\tau:=\beta-2\lambda L$, the coefficient of \(\norm{x^k-\bar x^{k-1}}^2\) is
\begin{align*}
\frac{(1-2\lambda L/c)^2}{\tau}-\frac2c+\frac{2\lambda L}{c^2}
&=\eta -\left(\eta+\frac2c-\frac{2\lambda L}{c^2}-\frac{(1-2\lambda L/c)^2}{\beta-2\lambda L}\right)\\
&=\eta-
\frac{\left(\eta+\frac2c-\frac{2\lambda L}{c^2}\right)(\beta-2\lambda L)
-\left(1-\frac{2\lambda L}{c}\right)^2}{\beta-2\lambda L}\\
&=\eta-
\frac{\beta(\eta+2/c)-1-2\lambda L(\eta+\beta/c^2)}
{\beta-2\lambda L}\\&
=\eta-\Theta.
\end{align*}
Consequently, combining the above facts with \eqref{eq:ineq_5_1} and then recalling the definition of \(\{\mathcal E_k(z)\}\) proves~\eqref{eq:fixed_descent_expanded}.
\end{proof}

\medskip
To guarantee convergence from Lemma~\ref{lem:fixed_energy}, we must choose
\(\eta\ge0\) and \(\lambda>0\) such that both the weight $\tau$ from Young's inequality and $\Theta$ are positive. For \(1<c<3\), set
\begin{equation}
\eta=
\begin{cases}
0,&1<c\leq1+\sqrt2,\\[2mm]
\dfrac{c^2-2c-1}{c(c-1)},&1+\sqrt2<c<3.
\end{cases}
\label{eq:fixed_eta_choice}
\end{equation}
As shown in Appendix~\ref{app:E-GRAAL_parameter_calculation}, this choice maximises the step size bound supplied by~\eqref{eq:descent_coefficient} over all \(\eta\geq0\). The resulting upper bound is \(\mu(c)\) in \eqref{eq:fixed_final_step_size}. Whenever $0 < \lambda L < \mu(c)$, the hypothesis of Lemma~\ref{lem:fixed_energy}, namely, $\lambda L < \beta/2$ is satisfied, and one also has $\Theta > 0$. The value of $\eta$ from \eqref{eq:fixed_eta_choice} is kept fixed throughout the convergence and ergodic rate analysis. With these preparations, we are now in a position to state the main convergence result of Algorithm~\ref{Alg:fixed}.

\begin{theorem}
\label{thm:fixed}
Suppose that \(F:\dom g\to\mathbb{H}\) is \(L\)-Lipschitz for some \(L>0\), and conditions~(\ref{ass:operators})--(\ref{ass:solution}) hold. Let \(1<c<3\) and take \(0<\lambda L<\mu(c)\), where \(\mu(c)\) is defined in~\eqref{eq:fixed_final_step_size}.
Then both sequences \((x^k)\) and \((\bar x^k)\) generated by Algorithm~\ref{Alg:fixed} converge
to the same solution of~\eqref{eq:monotone_vi}.
\end{theorem}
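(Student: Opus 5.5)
We apply Lemma~\ref{lem:fixed_energy} with the choice of \(\eta\) in \eqref{eq:fixed_eta_choice}. By the discussion preceding the theorem, this choice gives \(\lambda L<\beta/2\) and \(\Theta>0\).

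\textbf{Step 1: descent at a solution.} Take \(z=x^*\in\mathcal S\). Since \(\Psi_{x^*}(x^k)\ge0\), inequality \eqref{eq:fixed_descent_expanded} shows that \((\mathcal E_k(x^*))_{k\ge1}\) is nonincreasing and bounded below by \(0\). Hence it converges. Summing \eqref{eq:fixed_descent_expanded} gives
\[
\textstyle\sum_k\Theta\norm{x^k-\bar x^{k-1}}^2<\infty ,
\]
so \(x^k-\bar x^{k-1}\to0\). By \eqref{eq:convex_combination}, \(x^k-\bar x^k=\frac1c(x^k-\bar x^{k-1})\to0\), and therefore \(\bar x^k-\bar x^{k-1}\to0\). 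Combining these, \(x^{k+1}-x^k\to0\). Consequently \(\mathcal E_k(x^*)-\frac c{c-1}\norm{\bar x^k-x^*}^2\to0\), so \(\lim_k\norm{\bar x^k-x^*}\) exists for every \(x^*\in\mathcal S\). In particular \((\bar x^k)\) is bounded, and then so is \((x^k)\).

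\textbf{Step 2: cluster points are solutions.} Let \(\bar x^{k_j}\to\hat x\). Then \(x^{k_j}\to\hat x\) and \(x^{k_j+1}\to\hat x\). The prox update \eqref{eq:fixed_proximal_update} is equivalent to
\[
\frac{\bar x^{k}-x^{k+1}}{\lambda}-F(x^k)+F(x^{k+1})\in F(x^{k+1})+\partial g(x^{k+1}).
\]
The left-hand side tends to \(0\) along \(k_j\): here we use \(\bar x^k-x^{k+1}\to0\) and the Lipschitz continuity of \(F\). The operator \(F+\partial g\) is maximally monotone, since \(F\) is continuous, monotone and single-valued on \(\dom g\). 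Its graph is therefore closed. Alternatively, and to avoid worrying about domains, pass to the limit in \eqref{eq:prox_inequality} directly, using the lower semicontinuity of \(g\). Either route gives \(\hat x\in\mathcal S\). One point needs a little care: \(F\) is defined only on \(\dom g\), while \(\hat x\) lies a priori in \(\overline{\dom g}\). However, \(F\) extends uniquely by Lipschitz continuity, and the limit inequality \(\ip{F(\hat x)}{y-\hat x}+g(y)-g(\hat x)\ge0\) forces \(g(\hat x)<\infty\).

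\textbf{Step 3: conclude.} Every cluster point \(u\) of \((\bar x^k)\) lies in \(\mathcal S\), so \(\lim\norm{\bar x^k-u}\) exists by Step 1. Lemma~\ref{lem:useful_1} then shows that \(\bar x^k\) converges to some \(x^*\in\mathcal S\). Since \(x^k-\bar x^k\to0\), we also have \(x^k\to x^*\).

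The only real obstacle is Step 1. There one must be sure that the auxiliary terms in \(\mathcal E_k\) vanish. This needs \(\Theta>0\), which is supplied by the appendix computation, and \(x^{k+1}-x^k\to0\), which follows from the triangle inequality argument above. Step 2 is the standard closedness argument.
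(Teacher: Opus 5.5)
Your proof is correct and follows the paper's argument essentially step for step: descent of $\mathcal E_k(z^*)$ at a solution, vanishing of the auxiliary terms so that $\lim_k\norm{\bar x^k-z^*}$ exists, identification of cluster points as solutions, and then Lemma~\ref{lem:useful_1}. The only variation is in Step~2. Your second route passes to the limit in \eqref{eq:prox_inequality} using lower semicontinuity of $g$, takes $F(\hat x)$ as the Lipschitz limit, and then deduces $\hat x\in\dom g$; this does the job of the paper's appeal to closedness of the graph of $\partial g$ with $v=\lim_j F(x^{k_j})$. Rely on that route, since your first one asserts maximal monotonicity of $F+\partial g$ without the sum theorem that claim actually requires.
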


\begin{proof}
As shown in Appendix~\ref{app:E-GRAAL_parameter_calculation}, the choice of $\eta$ from \eqref{eq:fixed_eta_choice} gives
\(\beta>2\lambda L\) and \(\Theta>0\).
Now fix \(z^*\in\mathcal S\). Then \(\Psi_{z^*}(x^k)\geq0\), and hence by Lemma~\ref{lem:fixed_energy},
\begin{equation}
\mathcal E_{k+1}(z^*)+\Theta\norm{x^k-\bar x^{k-1}}^2
\leq\mathcal E_k(z^*).
\label{eq:convergence_descent_ineq}
\end{equation}
Therefore, it follows that \(\lim_{k\to\infty}\mathcal E_k(z^*)\) exists, and \((\bar x^k)\) is bounded.
Furthermore, using \(x^k=(c\bar x^k-\bar x^{k-1})/(c-1)\), we also obtain
boundedness of \((x^k)\). Moreover, by \eqref{eq:convergence_descent_ineq}, \(x^k-\bar x^{k-1}\to0\). Since  \eqref{eq:convex_combination} gives
\(x^k-\bar x^k=(x^k-\bar x^{k-1})/c\), we have
\(x^k-\bar x^k\to0\). Together, these facts yield
\(x^{k+1}-x^k=(x^{k+1}-\bar x^k)-(x^k-\bar x^k)\to0\). To identify the cluster points, take any subsequence $(x^{k_j})$ such that
\(x^{k_j}\to\tilde x\). By the proximal optimality condition, we have
\[
\frac{\bar x^{k_j}-x^{k_j+1}}{\lambda}-F(x^{k_j})
\in\partial g(x^{k_j+1}).
\]
By Lipschitz continuity of \(F\), the sequence \((F(x^{k_j}))\)
is Cauchy; denote its limit by \(v\). Since $x^{k_j+1}\to\tilde x$ and
$\bar x^{k_j}-x^{k_j+1}\to0$, closedness of the graph of $\partial g$
gives $-v\in\partial g(\tilde x)$. In particular,
\(\tilde x\in\dom g\). We may now use Lipschitz continuity at
\(\tilde x\) to conclude that \(v=F(\tilde x)\), and hence
\(\tilde x\in\mathcal S\). For every \(z^*\in\mathcal S\), the sequence
\(\{\mathcal E_k(z^*)\}\) has a limit, and its second and third terms tend to zero. Therefore
\(\lim_{k\to\infty}\norm{\bar x^k-z^*}\) exists.
Since the two sequences have the same cluster points, all in
\(\mathcal S\), Lemma~\ref{lem:useful_1} gives
convergence of~\((\bar x^k)\). Since \(\lim_{k\to\infty}(x^k-\bar x^k)=0\), the sequence \((x^k)\) converges to the same solution. This completes the proof.
\end{proof}

\begin{remark}
When \(c=\varphi\), our analysis recovers the step size range, \(0<\lambda L\leq\varphi/2\) of GRAAL, and its Lyapunov inequality~\cite[Theorem~1]{malitsky2020golden}.
At $c=\varphi$, $\eta=0$ and \(\mu(\varphi)=\varphi/2\). Therefore Theorem~\ref{thm:fixed}
covers the case \(0<\lambda L<\varphi/2\).
Furthermore, when $\lambda L = \varphi/2$, the conclusion follows directly from
\eqref{eq:ineq_5} in the proof of
Lemma~\ref{lem:fixed_energy}. Indeed, setting \(c=\varphi\) and
\(\lambda L=\varphi/2\) yields $1+\frac1c-2\lambda L= 1-\frac{2\lambda L}{c}=0$. Then using
\(x^k-\bar x^{k-1}=\varphi(x^k-\bar x^k)\), we obtain
\begin{multline*}
2\lambda\Psi_z(x^k)
+(\varphi+1)\norm{\bar x^{k+1}-z}^2
+\frac{\varphi}{2}\norm{x^{k+1}-x^k}^2\\
\leq(\varphi+1)\norm{\bar x^k-z}^2
+\frac{\varphi}{2}\norm{x^k-x^{k-1}}^2
-\varphi\norm{x^k-\bar x^k}^2,
\end{multline*}
which is the main Lyapunov inequality of GRAAL derived in \cite[Eq.~(20)]{malitsky2020golden}.
\end{remark}

\subsection{A counterexample for convergence when \texorpdfstring{\(c\geq3\)}{c >= 3}}

The next example shows that the upper bound of $c$ cannot be extended further for Algorithm~\ref{Alg:fixed}.
\begin{example}
\label{ex:divergence_example_with_fixed_step-size}
Let \(\mathbb H=\R^2\), \(g\equiv0\), and
\[
F:=\begin{bmatrix}0&-1\\1&0\end{bmatrix}.
\]
Then, for every \(c\geq3\), \(\lambda>0\), and \(x^0\neq0\), both sequences
generated by \eqref{eq:GRAAL_iterations} are unbounded.
\end{example}
\begin{proof}
The operator \(F\) is monotone and \(1\)-Lipschitz, and the unique
solution of \eqref{eq:monotone_vi} is zero. With \(I\) denoting the identity matrix on \(\R^2\),
the iteration \eqref{eq:GRAAL_iterations} takes the form \(z^{k+1}=Tz^k\), where
\[
z^k=\begin{pmatrix}x^k\\ \bar x^{k-1}\end{pmatrix}
\quad\text{and}\quad
T=\begin{pmatrix}
\dfrac{c-1}{c}I-\lambda F & \dfrac1c I\\[2mm]
\dfrac{c-1}{c}I & \dfrac1c I
\end{pmatrix}\in\R^{4\times4}.
\]
The characteristic polynomial of \(T\) is
\[
\chi_{T}(\zeta)
=P(\zeta)\overline{P(\bar\zeta)},
\]
where \(P(\zeta)=\zeta^2+a\zeta+b\) has
\(a=-1+\mathrm{i}\lambda\) and \(b=-\mathrm{i}\lambda/c\).
If both roots of \(P\) lie in the closed unit disc, then the Schur--Cohn inequality yields
\(\lvert a-\bar a b\rvert\leq1-\lvert b\rvert^2\);
see \cite[Theorem~6.8b]{henrici1974acca1}.
However, a direct calculation shows that for all \(c\geq3\) and \(\lambda>0\),
\begin{align*}
|a-\bar a b|^2-(1-|b|^2)^2
&=\left(1-\frac{\lambda^2}{c}\right)^2
+\lambda^2\left(1-\frac1c\right)^2
-\left(1-\frac{\lambda^2}{c^2}\right)^2\\
&=\frac{\lambda^2(c-1)(c-3)}{c^2}
+\frac{\lambda^4(c^2-1)}{c^4}>0.
\end{align*}
Therefore \(\rho(T)>1\). Let \(r\) be a root of \(P\) with \(|r|>1\).  Then it is easy to verify that
\(\ell=\bigl(cr-1,\mathrm{i}(cr-1),1,\mathrm{i}\bigr)\) is an
eigenvector of \(T\) with corresponding eigenvalue \(r\), that is \(\ell T=r\ell\). Since $x^0=(x^0_1, x^0_2)\neq0$, we have \(\ell z^0=cr(x_1^0+\mathrm{i}x_2^0)\neq0\). Therefore
\[
\|z^k\|\geq\frac{|\ell T^kz^0|}{\|\ell\|}
=\frac{|r|^k|\ell z^0|}{\|\ell\|}
\longrightarrow+\infty.
\]
Together with \eqref{eq:convex_combination}, this shows that both $(x^k)$ and $(\bar x^k)$ are unbounded.
\end{proof}
\begin{remark}
    Note that for \(g\equiv0\), eliminating \(\bar x^{k-1}\) gives, for \(k\geq1\),
\[
x^{k+1}=x^k-\lambda F(x^k)+\frac{\lambda}{c}F(x^{k-1})
=x^k-\gamma\bigl(F(x^k)+\alpha(F(x^k)-F(x^{k-1}))\bigr),
\]
where \(\gamma=\lambda(c-1)/c\) and \(\alpha=1/(c-1)\).
Thus \(c\ge3\) corresponds to \(0<\alpha\le1/2\), consistent with the counterexample for convergence of the $\rm FRB_{\alpha}$ algorithm as presented in~\cite[Example 2.1]{ou2026adaptive}.
\end{remark}

\section{Ergodic rate of convergence}
\label{sec:rate}
In this section, we derive an ergodic gap bound for Algorithm~\ref{Alg:fixed} and then compare its coefficient with~\cite[Corollary~2]{alacaoglu2023beyond}.
Given a nonempty compact set \(U\subset\dom g\), define the restricted
dual gap \cite{nemirovski2004prox,nesterov2007dual,malitsky2020golden} by
\begin{equation*}
\mathcal G_U(x):=\sup_{z\in U}\Psi_z(x)
=\sup_{z\in U}\left\{\ip{F(z)}{x-z}+g(x)-g(z)\right\}.
\end{equation*}
It is worth mentioning that \(\mathcal G_U\) is finite and
convex on \(\dom g\). Given \(x\in U\),
\(\mathcal G_U(x)\geq0\), and equality holds when \(x\in\mathcal U\cap S\). Moreover, if $U\cap S\neq\varnothing$, then $\mathcal G_U(x)\ge0$ for all $x\in\dom g$. Now, for \(N\geq1\), define the ergodic sequence
\begin{equation*}
X_N:=\frac1N\sum_{k=1}^N x^k.
\end{equation*}

\begin{theorem}
\label{thm:ergodic_rate}
Under the assumptions of Theorem~\ref{thm:fixed}, let \((x^k)\) and $(\bar x^k)$ be
generated by Algorithm~\ref{Alg:fixed}. Then, for any nonempty compact set \(U\subset\dom g\) and \(N\geq1\),
\begin{equation*}
\mathcal G_U(X_N)\leq\frac1{2\lambda N}\sup_{z\in U}
\left\{\frac c{c-1}\norm{\bar x^1-z}^2
+(\eta+\lambda L)\norm{x^1-x^0}^2\right\}.
\end{equation*}
\end{theorem}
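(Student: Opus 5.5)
The bound follows by telescoping the one-step inequality of Lemma~\ref{lem:fixed_energy} over \(k=1,\dots,N\), discarding the nonnegative terms, and then passing to the ergodic average via convexity of \(z\)-wise gap functions \(\Psi_z\). Fix \(z\in U\subset\dom g\). Under the assumptions of Theorem~\ref{thm:fixed}, with \(\eta\) chosen as in \eqref{eq:fixed_eta_choice}, we have \(\beta>2\lambda L\) and \(\Theta>0\), as recorded in the proof of that theorem. Hence \eqref{eq:fixed_descent_expanded} holds for every \(k\geq1\), and the term \(\Theta\norm{x^k-\bar x^{k-1}}^2\) can be dropped to give
\[
\mathcal E_{k+1}(z)+2\lambda\Psi_z(x^k)\leq\mathcal E_k(z).
\]
This holds for arbitrary \(z\in\dom g\) and does not require \(z\in\mathcal S\), since \(\Psi_z(x^k)\) is kept on the left rather than discarded.

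Summing over \(k=1,\dots,N\) telescopes the energies and gives
\[
\mathcal E_{N+1}(z)+2\lambda\sum_{k=1}^N\Psi_z(x^k)\leq\mathcal E_1(z).
\]
Since \(\mathcal E_{N+1}(z)\geq0\) (all three of its terms are nonnegative because \(c>1\), \(\eta\ge0\) and \(\lambda L>0\)), it can be dropped. Next I will identify \(\mathcal E_1(z)\) with the quantity in the statement. From \eqref{eq:convex_combination} with \(k=0\) and the initialisation \(\bar x^{-1}=x^0\), we get \(\bar x^0=\frac{c-1}{c}x^0+\frac1c x^0=x^0\). Therefore \(\norm{x^1-\bar x^0}^2=\norm{x^1-x^0}^2\), and
\[
\mathcal E_1(z)=\frac c{c-1}\norm{\bar x^1-z}^2+(\eta+\lambda L)\norm{x^1-x^0}^2 .
\]

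To reach the ergodic iterate, note that every \(x^k\) with \(k\ge1\) is a proximal point and so lies in \(\dom g\). Since \(\dom g\) is convex, \(X_N\in\dom g\). The map \(x\mapsto\Psi_z(x)\) is convex, so Jensen's inequality gives \(\Psi_z(X_N)\leq\frac1N\sum_{k=1}^N\Psi_z(x^k)\). Combining this with the telescoped inequality yields
\[
\Psi_z(X_N)\leq\frac{1}{2\lambda N}\Bigl(\frac c{c-1}\norm{\bar x^1-z}^2+(\eta+\lambda L)\norm{x^1-x^0}^2\Bigr)\quad\text{for each }z\in U.
\]
Taking the supremum over \(z\in U\) on both sides gives the claimed bound on \(\mathcal G_U(X_N)\). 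The right-hand side is finite because \(U\) is compact.

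No step is a genuine obstacle. The points that need care are the following:
\begin{itemize}
\item verifying \(\bar x^0=x^0\), so that the \(\eta\)-term of \(\mathcal E_1\) matches \(\norm{x^1-x^0}^2\);
\item confirming that Lemma~\ref{lem:fixed_energy} applies to arbitrary \(z\in\dom g\), not only to solutions;
\item ensuring \(X_N\in\dom g\), so that \(\Psi_z(X_N)\) is defined.
\end{itemize}
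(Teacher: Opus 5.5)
Your proposal is correct and follows the paper's proof: sum the descent inequality of Lemma~\ref{lem:fixed_energy} over $k=1,\dots,N$, drop the nonnegative terms $\mathcal E_{N+1}(z)$ and $\Theta\sum_{k}\norm{x^k-\bar x^{k-1}}^2$, apply convexity of $\Psi_z$, use $\bar x^0=x^0$ to identify $\mathcal E_1(z)$, and take the supremum over $z\in U$. Your extra checks, that $X_N\in\dom g$ and that the lemma holds for non-solution $z$, are sound details the paper leaves implicit.
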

\begin{proof}
Fix \(z\in U\). Summing \eqref{eq:fixed_descent_expanded} from
\(k=1\) to \(N\) gives
\[
2\lambda\sum_{k=1}^N\Psi_z(x^k)
+\mathcal E_{N+1}(z)
+\Theta\sum_{k=1}^N\norm{x^k-\bar x^{k-1}}^2
\leq\mathcal E_1(z).
\]
Since \(\mathcal E_{N+1}(z)\geq0\), \(\Theta>0\), and
\(\Psi_z\) is convex, we have
\begin{equation}
2\lambda N\Psi_z(X_N)
\leq2\lambda\sum_{k=1}^N\Psi_z(x^k)
\leq\mathcal E_1(z).
\label{eq:ergodic_rate_convex_ineq}
\end{equation}
Finally, using \(\bar x^0=x^0\) and then
taking the supremum over \(z\in U\) completes the proof.
\end{proof}

To express the bound in terms of the initial distance to a solution, we define
\begin{equation*}
C_1:=\frac c{c-1}+
\frac{(\lambda L)^2}{\beta-\lambda L}\quad\text{and}\quad
C_2:=\frac{(c+1)^2}{c(c-1)}C_1.
\end{equation*}
Note that the denominator in \(C_1\) is positive because
\(\beta>2\lambda L\).

\begin{lemma}
\label{lem:initial_energy}
Under the assumptions of Theorem~\ref{thm:fixed}, let \(z^*\in\mathcal S\). Then
\begin{equation}
\mathcal E_1(z^*)
\leq C_1\norm{x^0-z^*}^2.
\label{eq:initial_energy_bounded}
\end{equation}
\end{lemma}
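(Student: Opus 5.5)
The plan is to compare the initial energy with one step of the method started at $x^0$. Since $\bar x^{-1}=x^0$, we have $\bar x^0=x^0$, so
\[
\mathcal E_1(z^*)=\frac c{c-1}\norm{\bar x^1-z^*}^2+(\eta+\lambda L)\norm{x^1-x^0}^2 .
\]
The descent inequality of Lemma~\ref{lem:fixed_energy} only holds for $k\ge1$, so the step from $x^0$ to $x^1$ has to be treated directly.

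\textbf{Step 1: remove $\bar x^1$.} By \eqref{eq:convex_combination}, $\bar x^1=\frac{c-1}{c}x^1+\frac1c x^0$. Applying \eqref{eq:convex_combination_identity} with $\alpha=\frac{c-1}{c}$ and multiplying by $\frac{c}{c-1}$ gives
\[
\frac c{c-1}\norm{\bar x^1-z^*}^2=\norm{x^1-z^*}^2+\frac1{c-1}\norm{x^0-z^*}^2-\frac1c\norm{x^1-x^0}^2 .
\]
Hence
\[
\mathcal E_1(z^*)=\norm{x^1-z^*}^2+\frac1{c-1}\norm{x^0-z^*}^2+\Bigl(\eta+\lambda L-\frac1c\Bigr)\norm{x^1-x^0}^2 .
\]

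\textbf{Step 2: one-step estimate for $\norm{x^1-z^*}^2$.} The first step is $x^1=\prox_{\lambda g}(x^0-\lambda F(x^0))$. Apply \eqref{eq:prox_inequality} with $y=z^*$, and add $\lambda\Psi_{z^*}(x^1)\ge0$. The $g$-terms cancel, leaving
\[
\ip{x^1-x^0}{z^*-x^1}+\lambda\ip{F(x^0)-F(z^*)}{z^*-x^1}\ge0 .
\]
Expand the first inner product with \eqref{eq:three_point_identity}. For the second, split $z^*-x^1=(z^*-x^0)+(x^0-x^1)$. Monotonicity makes the $(z^*-x^0)$ part nonpositive, and Cauchy--Schwarz with Lipschitz continuity bounds the other part. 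Writing $d:=\norm{x^1-x^0}$ and $s:=\norm{x^0-z^*}$, this yields
\[
\norm{x^1-z^*}^2\le s^2-d^2+2\lambda L\,s\,d .
\]

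\textbf{Step 3: combine and absorb the cross term.} Substituting Step 2 into Step 1 gives
\[
\mathcal E_1(z^*)\le\frac c{c-1}s^2+\Bigl(\eta+\lambda L-1-\frac1c\Bigr)d^2+2\lambda L\,s\,d .
\]
By the definition of $\beta$, the coefficient of $d^2$ equals $-(\beta-\lambda L)$. This is negative because $\beta>2\lambda L$ (Appendix~\ref{app:E-GRAAL_parameter_calculation}). Maximising the quadratic $-(\beta-\lambda L)d^2+2\lambda L\,s\,d$ over $d\ge0$ bounds it by $\frac{(\lambda L)^2}{\beta-\lambda L}s^2$. This gives exactly $C_1 s^2$, which is \eqref{eq:initial_energy_bounded}.

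The main obstacle is Step 2. One must use the prox inequality only at step $0$, and split $z^*-x^1$ so that monotonicity kills the $(z^*-x^0)$ part. What remains is a cross term whose size, after Young's inequality, produces precisely the constant $(\lambda L)^2/(\beta-\lambda L)$.
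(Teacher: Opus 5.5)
Your proof is correct and is essentially the paper's argument. Your inequality (the prox step at $k=0$ plus $\Psi_{z^*}(x^1)\ge0$) is exactly the firm-nonexpansiveness inequality $\ip{v+s}{s+\lambda d}\le0$ that the paper uses. It is followed by the same monotonicity drop, the same Lipschitz bound and the same completion of the square in $\norm{x^1-x^0}$. The only difference is bookkeeping: you route through $\norm{x^1-z^*}^2$ via the three-point and convex-combination identities, whereas the paper expands $\bar x^1-z^*=(x^0-z^*)+\frac{c-1}{c}(x^1-x^0)$ directly.
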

\begin{proof}
From the first proximal step ($k=0$) and the solution criteria, we have
\[
x^1=\prox_{\lambda g}(x^0-\lambda F(x^0))
\quad\text{and}\quad
z^*=\prox_{\lambda g}(z^*-\lambda F(z^*)),
\]
respectively. Let \(v=x^0-z^*\), \(s=x^1-x^0\), and
\(d=F(x^0)-F(z^*)\). By the firm nonexpansiveness of the proximal
mapping, we obtain
\[
\norm{v+s}^2\leq\ip{v+s}{v-\lambda d},
\quad\text{or, equivalently,}\quad
\ip{v+s}{s+\lambda d}\leq0.
\]
By monotonicity and Lipschitz continuity of \(F\), we have
\(\ip{v}{d}\geq0\) and \(\norm d\leq L\norm v\). Hence
\begin{equation*}
\ip{v}{s}
\leq-\norm{s}^2-\lambda\ip{s}{d}
\leq-\norm{s}^2+\lambda L\norm v\norm s.
\end{equation*}
Since \(\bar x^1-z^*=v+\frac{c-1}{c}s\), it follows that
\begin{align*}
\mathcal E_1(z^*)
&=\frac c{c-1}\norm v^2+2\ip{v}{s}
+\left(\frac{c-1}{c}+\eta+\lambda L\right)\norm s^2
\notag\\
&\leq\frac c{c-1}\norm v^2
+2\lambda L\norm v\norm s-(\beta-\lambda L)\norm s^2
\notag\\
&=C_1\norm v^2
-(\beta-\lambda L)
\left(\norm s-\frac{\lambda L\norm v}{\beta-\lambda L}\right)^2.
\end{align*}
This proves \eqref{eq:initial_energy_bounded}.
\end{proof}

We now specialise to \(g=\delta_C\), where \(C\subset\mathbb H\) is a nonempty, closed, and convex set. Fix a solution \(z^*\in C\)
and consider the compact convex set
\begin{equation}
\mathcal B:=\left\{x\in C:
\norm{x-z^*}^2\leq18\norm{x^0-z^*}^2\right\}.
\label{eq:comparison_within_the_ball_B}
\end{equation}
We use this set to make a fair comparison of the gap rate with
\cite[Corollary~2]{alacaoglu2023beyond}.
Before estimating the gap on \(\mathcal B\), we verify that it
contains the iterates under a suitable bound on~\(C_2\).

\begin{lemma}
\label{lem:both_bdd_sequnce}
Under the assumptions of Lemma~\ref{lem:initial_energy}, suppose that \(g=\delta_C\), where \(C\) is nonempty, closed, and convex. Then, for every \(k\geq0\),
\begin{equation*}
\max\left\{\norm{x^k-z^*}^2,\norm{\bar x^k-z^*}^2\right\}
\leq C_2\norm{x^0-z^*}^2.
\end{equation*}
Consequently, if \(C_2\leq18\), then
\(x^k,\bar x^k,X_N\in\mathcal B\) for all \(k\geq0\) and
\(N\geq1\).
\end{lemma}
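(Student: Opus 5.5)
The plan is to combine the monotonicity of the Lyapunov energy at a solution with Lemma~\ref{lem:initial_energy}, and then recover $x^k$ from $\bar x^k$ through the convex-combination relation \eqref{eq:convex_combination}.

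\emph{Step 1: bound on $\bar x^k$ for $k\ge1$.} By \eqref{eq:convergence_descent_ineq}, the sequence $\mathcal E_k(z^*)$ is nonincreasing for $k\geq1$. All three of its terms are nonnegative, so
\[
\frac{c}{c-1}\norm{\bar x^k-z^*}^2\leq\mathcal E_k(z^*)\leq\mathcal E_1(z^*)\leq C_1\norm{x^0-z^*}^2,
\]
where the last inequality is Lemma~\ref{lem:initial_energy}. Hence $\norm{\bar x^k-z^*}^2\leq\frac{c-1}{c}C_1\norm{x^0-z^*}^2$ for $k\ge1$. The same bound holds for $\bar x^0=x^0$, since $C_1\geq c/(c-1)$ gives $\frac{c-1}{c}C_1\geq1$. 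It also holds for $\bar x^{-1}=x^0$.

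\emph{Step 2: bound on $x^k$.} From \eqref{eq:convex_combination},
\[
x^k-z^*=\frac{c}{c-1}(\bar x^k-z^*)-\frac{1}{c-1}(\bar x^{k-1}-z^*).
\]
Write $R^2:=\frac{c-1}{c}C_1\norm{x^0-z^*}^2$. The triangle inequality then gives
\[
\norm{x^k-z^*}\leq\frac{c+1}{c-1}R,
\qquad\text{so}\qquad
\norm{x^k-z^*}^2\leq\frac{(c+1)^2}{(c-1)^2}\cdot\frac{c-1}{c}C_1\norm{x^0-z^*}^2=C_2\norm{x^0-z^*}^2.
\]
This is exactly the constant $C_2$. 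For $\bar x^k$ the bound from Step~1 is smaller than this, because $(c+1)^2/(c-1)^2\geq1$. The case $k=0$ is trivial because $C_2\geq1$.

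\emph{Step 3: membership in $\mathcal B$.} All iterates lie in $C$. Indeed, $x^{k+1}=P_C(\cdot)\in C$ and $x^0\in C$, and $\bar x^k$ is a convex combination of points of $C$, so by induction $\bar x^k\in C$. If $C_2\le18$, Step~2 places each $x^k$ and $\bar x^k$ in $\mathcal B$. Since $\mathcal B$ is convex (a ball intersected with $C$), the average $X_N$ also lies in $\mathcal B$.

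I expect no real obstacle here. The only delicate point is matching the exact constant $C_2$: one must use the $\bar x$-bound at both indices $k$ and $k-1$ and apply the triangle inequality, rather than Young's inequality, so that the factor $(c+1)^2/(c(c-1))$ comes out exactly.
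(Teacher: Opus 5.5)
Your proposal is correct and follows the paper's proof essentially step for step. The energy descent at $z^*$, combined with Lemma~\ref{lem:initial_energy}, gives $\norm{\bar x^k-z^*}^2\leq\frac{c-1}{c}C_1\norm{x^0-z^*}^2$, and the triangle inequality applied to $x^k-z^*=\frac{c}{c-1}(\bar x^k-z^*)-\frac{1}{c-1}(\bar x^{k-1}-z^*)$ then yields exactly $C_2$; your explicit check that all iterates lie in $C$, so that they and hence $X_N$ belong to $\mathcal B$, is a detail the paper leaves implicit.
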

\begin{proof}
Since \(z^*\) is a solution, \(\Psi_{z^*}(x^k)\geq0\).
From \eqref{eq:fixed_descent_expanded} and Lemma~\ref{lem:initial_energy},
we have
\[
\norm{\bar x^k-z^*}^2
\leq\frac{c-1}{c}C_1\norm{x^0-z^*}^2.
\]
This estimate also holds at \(k=0\), since \(\bar x^0=x^0\)
and \(C_1\geq c/(c-1)\). For \(k\geq1\), it follows from~\eqref{eq:convex_combination} that
\begin{align*}
\norm{x^k-z^*}
&\leq\frac{c\norm{\bar x^k-z^*}
+\norm{\bar x^{k-1}-z^*}}{c-1}\leq\frac{c+1}{\sqrt{c(c-1)}}\sqrt{C_1}\,
\norm{x^0-z^*}.
\end{align*}
The same bound holds at \(k=0\), because \(C_2>1\), and it also
controls \(\bar x^k\), because
\(C_2\geq(c-1)C_1/c\). Finally, when \(C_2\leq18\), we have \(X_N\in\mathcal B\)
by convexity of \(\mathcal B\).
\end{proof}

\begin{theorem}
\label{thm:main_ergodic_rate_fixed_steps}
Under the assumptions of Lemma~\ref{lem:both_bdd_sequnce}, suppose
that \(C_2\leq18\), and let \(\mathcal B\) be defined by
\eqref{eq:comparison_within_the_ball_B}. Then, for every \(N\geq1\),
\begin{equation}
\mathcal G_\mathcal B(X_N)\leq
\frac{\left(\sqrt{\dfrac{18c}{c-1}}+
\sqrt{C_1}\right)^2}{2\lambda L}
\frac{L\norm{x^0-z^*}^2}{N}.
\label{eq:distance_rate_fixed_step_size}
\end{equation}
\end{theorem}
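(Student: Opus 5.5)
The plan is to start from the per-point estimate \eqref{eq:ergodic_rate_convex_ineq} obtained in the proof of Theorem~\ref{thm:ergodic_rate}. For every \(z\in\mathcal B\) it gives \(2\lambda N\Psi_z(X_N)\leq\mathcal E_1(z)\). Taking the supremum over \(z\in\mathcal B\) then reduces the claim to the uniform bound
\[
\sup_{z\in\mathcal B}\mathcal E_1(z)\leq\left(\sqrt{\tfrac{18c}{c-1}}+\sqrt{C_1}\right)^2\norm{x^0-z^*}^2,
\]
since dividing by \(2\lambda N\) and writing \(1/(2\lambda)=L/(2\lambda L)\) gives exactly \eqref{eq:distance_rate_fixed_step_size}. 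Here \(\mathcal B\) is compact and convex, and Lemma~\ref{lem:both_bdd_sequnce}, which uses \(C_2\leq18\), guarantees \(X_N\in\mathcal B\subset C\). So the gap is evaluated on a set that contains both the ergodic iterate and \(z^*\), and \(\mathcal G_{\mathcal B}(X_N)\ge0\).

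To bound \(\mathcal E_1(z)\) for a general \(z\in\mathcal B\), I would compare it with \(\mathcal E_1(z^*)\). Only the first term of \(\mathcal E_1\) depends on \(z\). The cleanest route is to view \(\mathcal E_1(z)\) as the square of a weighted norm of the vector \((\bar x^1-z,\;x^1-\bar x^0,\;x^1-x^0)\), with weights \(\tfrac c{c-1},\eta,\lambda L\). The triangle inequality in this weighted norm gives
\[
\sqrt{\mathcal E_1(z)}\leq\sqrt{\tfrac c{c-1}}\,\norm{z-z^*}+\sqrt{\mathcal E_1(z^*)}.
\]
Note that \(\bar x^0=x^0\), so the \(\eta\) and \(\lambda L\) terms merge into \((\eta+\lambda L)\norm{x^1-x^0}^2\), matching Theorem~\ref{thm:ergodic_rate}. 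Now insert \(\norm{z-z^*}^2\leq18\norm{x^0-z^*}^2\), from the definition \eqref{eq:comparison_within_the_ball_B} of \(\mathcal B\), and \(\mathcal E_1(z^*)\leq C_1\norm{x^0-z^*}^2\) from Lemma~\ref{lem:initial_energy}. This yields
\[
\sqrt{\mathcal E_1(z)}\leq\left(\sqrt{\tfrac{18c}{c-1}}+\sqrt{C_1}\right)\norm{x^0-z^*}.
\]
Squaring and taking the supremum finishes the argument.

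The only delicate point is the triangle-inequality step. It combines cross terms that a naive expansion \(\norm{\bar x^1-z}^2\le 2\norm{\bar x^1-z^*}^2+2\norm{z-z^*}^2\) would handle more crudely, losing a factor of \(2\). If one prefers an elementary check, one can expand \(\norm{\bar x^1-z}^2\) and bound the cross term \(2\ip{\bar x^1-z^*}{z^*-z}\) by Cauchy--Schwarz. The needed estimate \(\sqrt{c/(c-1)}\,\norm{\bar x^1-z^*}\le\sqrt{\mathcal E_1(z^*)}\) is then immediate, since the other two terms of \(\mathcal E_1(z^*)\) are nonnegative. I do not expect any genuine obstacle beyond this bookkeeping.
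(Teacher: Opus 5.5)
Your proof is correct and is essentially the paper's argument. The paper splits $\bar x^1-z=(\bar x^1-z^*)+(z^*-z)$ and applies Young's inequality with weights $1+\tau$ and $1+1/\tau$ to get $\mathcal E_1(z)\le\frac{c}{c-1}(1+\tau)\norm{z-z^*}^2+(1+\frac1\tau)\mathcal E_1(z^*)$. It then inserts the same two bounds you use and optimises at $\tau=\sqrt{b/a}$, which is exactly your weighted-norm triangle inequality and gives the same constant $\bigl(\sqrt{18c/(c-1)}+\sqrt{C_1}\bigr)^2$ before concluding from \eqref{eq:ergodic_rate_convex_ineq}.
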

\begin{proof}
Let \(z\in\mathcal B\) and \(\tau>0\). To use the initial energy
bound at \(z^*\), we write
\(\bar x^1-z=(\bar x^1-z^*)+(z^*-z)\).
By Young's inequality and Lemma~\ref{lem:initial_energy}, we have
\begin{align}
\mathcal E_1(z)
&\leq\frac c{c-1}(1+\tau)\norm{z-z^*}^2
+\left(1+\frac1\tau\right)\mathcal E_1(z^*)
\notag\\
&\leq\left\{\frac{18c}{c-1}(1+\tau)
+C_1\left(1+\frac1\tau\right)\right\}
\norm{x^0-z^*}^2.
\label{eq:Young_energy_bound}
\end{align}
To minimise the right-hand side of \eqref{eq:Young_energy_bound}, note that, for
\(a,b>0\), the minimum of $a(1+\tau)+b(1+1/\tau)$ is attained at \(\tau=\sqrt{b/a}\). Therefore, by taking
\(a=18c/(c-1)\) and \(b=C_1\) in~\eqref{eq:Young_energy_bound}, we obtain
\begin{equation*}
\sup_{z\in\mathcal B}\mathcal E_1(z)
\leq\left(\sqrt{\frac{18c}{c-1}}+\sqrt{C_1}\right)^2
\norm{x^0-z^*}^2.
\end{equation*}
The conclusion follows from \eqref{eq:ergodic_rate_convex_ineq}.
\end{proof}

\begin{corollary}
\label{cor:ergodic_gap_constant_comparison}
In the setting of Theorem~\ref{thm:main_ergodic_rate_fixed_steps}, choose
\begin{equation}
c=\frac{19}{10},\qquad \eta=0,\qquad \lambda=\frac5{7L}.
\label{eq:selected_parameters}
\end{equation}
Then \(x^k,\bar x^k,X_N\in\mathcal B\) for all \(k\geq0\) and
\(N\geq1\), and
\begin{equation*}
\mathcal G_\mathcal B(X_N)\leq
\frac{42.802L\norm{x^0-z^*}^2}{N}.
\end{equation*}
\end{corollary}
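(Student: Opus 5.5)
The corollary is Theorem~\ref{thm:main_ergodic_rate_fixed_steps} specialised to the parameters~\eqref{eq:selected_parameters}, so the plan is to check that theorem's hypotheses and then evaluate its constant numerically.

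First, I will confirm admissibility. Since \(c=19/10<1+\sqrt2\), the choice~\eqref{eq:fixed_eta_choice} gives \(\eta=0\), consistent with~\eqref{eq:selected_parameters}. Hence \(\beta=1+1/c=29/19\approx1.5263\). From~\eqref{eq:fixed_final_step_size},
\[
\mu(19/10)=\frac{c(2c+2-c^2)}{2(c+1)}=\frac{1.9\,(3.8+2-3.61)}{5.8}=\frac{1.9\cdot2.19}{5.8}\approx0.7174 .
\]
Since \(\lambda L=5/7\approx0.7143<\mu(c)\), Theorem~\ref{thm:fixed} applies. This also gives \(\beta>2\lambda L=10/7\) and \(\Theta>0\).

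Next, I will compute the constants.
\[
\beta-\lambda L=\frac{29}{19}-\frac57=\frac{203-95}{133}=\frac{108}{133},
\qquad
C_1=\frac{19}{9}+\frac{25/49}{108/133}=\frac{19}{9}+\frac{25\cdot133}{49\cdot108}.
\]
The second term simplifies to \(3325/5292\approx0.6283\), so \(C_1\approx2.1111+0.6283=2.7394\). Then
\[
C_2=\frac{(c+1)^2}{c(c-1)}C_1=\frac{8.41}{1.71}\,C_1\approx4.9181\cdot2.7394\approx13.47\le18 .
\]
Lemma~\ref{lem:both_bdd_sequnce} therefore places \(x^k,\bar x^k,X_N\) in \(\mathcal B\). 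It also gives the hypothesis \(C_2\le18\) needed for Theorem~\ref{thm:main_ergodic_rate_fixed_steps}.

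Finally, I will evaluate~\eqref{eq:distance_rate_fixed_step_size}. Here \(18c/(c-1)=18\cdot19/9=38\), so \(\sqrt{38}\approx6.1644\). Also \(\sqrt{C_1}\approx1.6551\). The sum is about \(7.8195\), and its square is about \(61.145\). Dividing by \(2\lambda L=10/7\) gives approximately \(42.80\).

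The only real obstacle is the rounding. The claimed constant \(42.802\) sits very close to the true value, so I will carry exact fractions until the last step. One way is to bound \(\sqrt{C_1}\) above by a rational number and \(\sqrt{38}\) above by \(6.16441\). Checking \(0.7\,(\sqrt{38}+\sqrt{C_1})^2\le42.802\) is then a finite verification. If the margin is too thin, I will refine these upper estimates. No further analysis is needed beyond Theorem~\ref{thm:main_ergodic_rate_fixed_steps} and Lemma~\ref{lem:both_bdd_sequnce}.
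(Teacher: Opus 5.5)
Your proposal is correct and is essentially the paper's proof: check $\lambda L=5/7<\mu(19/10)=4161/5800$, compute $C_1=2071/756$ and $C_2=91669/6804<18$, and evaluate $\tfrac{7}{10}(\sqrt{38}+\sqrt{C_1})^2\approx 42.8016$. The paper additionally records $\beta-2\lambda L=13/133$ and $\Theta=127/4693$ explicitly, where you instead cite the appendix.

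One small slip in your rounding plan: $6.16441$ is a lower bound for $\sqrt{38}$ (its square is about $37.99995$), not an upper bound. Take $\sqrt{38}\le 6.16442$ and $\sqrt{C_1}\le 1.65512$ instead; these give $0.7\,(7.81954)^2\approx 42.8016<42.802$, which completes the finite check.
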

\begin{proof}
The parameters in \eqref{eq:selected_parameters} satisfy
\(\lambda L=5/7<\mu(19/10)=4161/5800\). Furthermore, a direct substitution gives 
\[
\beta-2\lambda L=\frac{13}{133}>0,\quad
\Theta=\frac{127}{4693}>0,\quad
C_1=\frac{2071}{756},\quad\text{and}\quad
C_2=\frac{91669}{6804}<18.
\]
Therefore, the coefficient in \eqref{eq:distance_rate_fixed_step_size} is
\[
\frac{1}{2\lambda L}\left(\sqrt{\dfrac{18c}{c-1}}+
\sqrt{C_1}\right)^2
=\frac7{10}
\left(\sqrt{38}+\sqrt{\frac{2071}{756}}\right)^2
\approx42.802.
\]
\end{proof}

\begin{remark}
Let \(c=2\) and \(\alpha=(1-\varepsilon)/L\),
where \(0<\varepsilon<1\). Then \cite[Corollary 2 and Eq. (20)]{alacaoglu2023beyond} yields the following bound on \(\mathcal B\)
\begin{equation*}
\mathcal G_\mathcal B(X_N)\leq
\frac{C_0}{1-\varepsilon}\,
\frac{L\norm{x^0-z^*}^2}{N},
\end{equation*}
where
\begin{equation*}
\begin{aligned}
C_0
&=\frac12\min_{\tau>0}
\left\{36(1+\tau)+3\left(1+\frac1\tau\right)+30\right\}=\frac{69+12\sqrt3}{2}\approx44.892.
\end{aligned}
\end{equation*}
Thus, recomputing the constant from these estimates gives
\(C_0\approx44.892\), rather than the value \(32\)
reported in \cite[Corollary~2]{alacaoglu2023beyond}. However, the coefficient in Corollary~\ref{cor:ergodic_gap_constant_comparison}
provides a modest improvement over this recomputed constant.
\end{remark}

\section{Adaptive extended golden ratio algorithm}
\label{sec:adaptive_algorithm_local_case}
In this section, we present a fully adaptive variant of Algorithm~\ref{Alg:fixed}
whose convergence does not require an explicit upper bound on the step sizes. It involves three
parameters, \(c\), \(\rho\), and \(\sigma\), whose choice is discussed after
the algorithm is presented. Throughout, the step size rule \eqref{eq:local_step_size} is understood with the convention $\frac{a}{0} = +\infty$ for $a \geq 0$.
\begin{algorithm}[H]
\caption{Adaptive Extended GRAAL (AE-GRAAL) for \eqref{eq:monotone_vi}.}\label{Alg:general}
Let \(1<c<3\), and select \(\rho\), \(\sigma>0\) from \eqref{eq:select_rho_from_the_range}--\eqref{eq:select_sigma_from_the_bound}, respectively. Let \(x^0,x^1\in\dom g\), \(\lambda_0>0\), and set
\(\bar x^0=x^1\). For \(k\geq1\), compute

\begin{align}
\lambda_k&=\min\left\{\rho\lambda_{k-1},
\frac{\sigma\norm{x^k-x^{k-1}}}{\norm{F(x^k)-F(x^{k-1})}}\right\},
\label{eq:local_step_size}\\
\bar x^k&=\frac{c-1}{c}x^k+\frac1c\bar x^{k-1},
\label{eq:local_convex_combination}\\
x^{k+1}&=\prox_{\lambda_kg}\bigl(\bar x^k-\lambda_kF(x^k)\bigr).
\label{eq:local_prox_update}
\end{align}
\end{algorithm}
As observed in \cite{malitsky2020golden}, the value of $\lambda_0 > 0$ should be
neither too large nor too small, since it can affect the subsequent iterates. A natural way to select $\lambda_0$ is to set
$\lambda_0 = \frac{\|x^1 - x^0\|}{\|F(x^1) - F(x^0)\|}$, where $x^0$ is a small perturbation of $x^1$. Nonetheless, the goal is to choose $x^0$, $x^1$ such that $F(x^0)\neq F(x^1)$. If $F\equiv0$, none of the above choices works, so one may proceed with any $\lambda_0>0$. From the step size rule \eqref{eq:local_step_size}, we note the two key facts: $0<\theta_k:=\frac{\lambda_k}{\lambda_{k-1}}\leq\rho$, and 
\begin{equation}
\lambda_k\norm{F(x^k)-F(x^{k-1})}
\leq\sigma\norm{x^k-x^{k-1}}, \quad k\ge1.
\label{eq:local_key_inequality}
\end{equation}

\subsection{Choice of parameters}
\label{subsec:adaptive_algorithm_local_case-parameters}
In order to select the values of \(c\), \(\rho\), and \(\sigma\) in Algorithm \ref{Alg:general},
we introduce an auxiliary parameter \(\eta>0\), which is used only in the
convergence analysis and not in the algorithmic updates.
For \(1<c<3\), first select $\eta$ from
\begin{equation}
\max\{0,\eta_-\}<\eta<\eta_+,
\label{eq:select_eta_from_the_range}
\end{equation}
where \(\eta_\pm:=\frac{c-1\pm\sqrt{3(c+1)(3-c)}}{2c}\). Since \(1<c<3\), we have \(\eta_+>0\) and \(\eta_+>\eta_-\). Hence the interval~\eqref{eq:select_eta_from_the_range} is nonempty. Next, set \(\beta=1+\frac{1}{c}-\eta\) and choose
\begin{equation}
1<\rho<r_+:=\frac{\beta}{c}
+\sqrt{\frac{\beta^2}{c^2}+\beta\eta}.
\label{eq:select_rho_from_the_range}
\end{equation}
First observe that
\((c+3)^2-3(c+1)(3-c)=4c^2>0\). Thus
\(\eta<\eta_+<1+1/c\), and hence $\beta>0$. Furthermore, note that the polynomial \(p(t):=t^2-2\beta t/c-\beta\eta\) has two roots
\(r_\pm=\beta/c\pm\sqrt{\beta^2/c^2+\beta\eta}\),
with \(r_-<0<r_+\). By the choice of \(\eta\) in \eqref{eq:select_eta_from_the_range}, we have
\[
p(1)=1-\beta\left(\eta+\frac2c\right)
=\eta^2-\frac{c-1}{c}\eta+\frac{c^2-2c-2}{c^2}=-(\eta_+-\eta)(\eta-\eta_-)<0.
\]
Since \(p\) is negative between its roots, we obtain \(1<r_+\). Therefore, the interval in~\eqref{eq:select_rho_from_the_range} is nonempty. Finally, for this choice of \(\rho\), take
\begin{equation}
0<\sigma<\sigma_{\max}:=
\frac{\min\left\{\beta\eta,\,
\beta\eta+\frac{2\beta\rho}{c}-\rho^2\right\}}
{2\left(\eta+\frac{\beta}{c^2}\right)}.
\label{eq:select_sigma_from_the_bound}
\end{equation}
For \(1<\rho<r_+\), we have
\(\beta\eta+2\beta\rho/c-\rho^2=(r_+-\rho)(\rho-r_-)>0\).
Together with \(\beta,\eta>0\), this gives \(\sigma_{\max}>0\). 

\medskip
Through a concrete example, we now show how the parameters determined by \eqref{eq:select_eta_from_the_range}--\eqref{eq:select_sigma_from_the_bound} can be selected in practice. We start with \(c = 2\), which gives \(\eta_+ = 1\) and
\(\eta_- = -1/2\).  With the choice
\(\eta=\frac{1}{4}\), one obtains \(\beta=\tfrac54\) and 
\(r_+=\frac{5+3\sqrt5}{8}>\tfrac54\). Now taking \(\rho=\tfrac54\)
gives $\sigma_{\max}=\frac5{18}$. Consequently, any value of the
form $\sigma=(1-\varepsilon)\sigma_{\max}$ with $0<\varepsilon<1$ is admissible. For instance, the choice of $\varepsilon=10^{-2}$ yields \(\sigma=\frac{11}{40}\). This is precisely the example we discussed in Subsection \ref{sec:contributions}.

\subsection{Convergence analysis}
\label{subsec:adaptive_algorithm_local_case-convergence-analysis}
For $z\in\mathbb{H}$, define the Lyapunov energy
\begin{equation*}
\mathcal E_k(z):=
\frac c{c-1}\norm{\bar x^k-z}^2
+\eta\norm{x^k-\bar x^{k-1}}^2
+\sigma\norm{x^k-x^{k-1}}^2, \quad k\ge1.
\end{equation*}
Since \(\beta>0\) and \(\eta>0\), it follows from
\eqref{eq:select_sigma_from_the_bound} that
\(2\sigma<\dfrac{\beta\eta}{\eta+\frac{\beta}{c^2}}<\beta\).
We may therefore set
\begin{equation}
\Delta(\theta):=\frac{q(\theta)-2\sigma D}{\beta-2\sigma},
\label{eq:local_descent_term_general}
\end{equation}
where $q(\theta):=\beta\eta+\frac{2\beta}{c}\theta-\theta^2$ and $D:=\eta+\frac{\beta}{c^2}$. 

\medskip
Next, we have the following lemma.
\begin{lemma}
\label{lem:local_energy}
Suppose that (\ref{ass:operators})--(\ref{ass:solution}) hold. Let \((x^k)\) and
\((\bar x^k)\) be
generated by Algorithm~\ref{Alg:general}. Then, for every
\(z\in\dom g\) and \(k\geq2\),
\begin{equation}
\mathcal E_{k+1}(z)+2\lambda_k\Psi_z(x^k)
+\Delta(\theta_k)\norm{x^k-\bar x^{k-1}}^2
\leq\mathcal E_k(z).
\label{eq:adaptive_descent_ineq}
\end{equation}
\end{lemma}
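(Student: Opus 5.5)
The plan is to repeat the argument of Lemma~\ref{lem:fixed_energy}, with two changes. First, the step sizes now vary, so the proximal inequality from the previous step has to be rescaled. Second, the global Lipschitz bound is replaced by the step-size inequality \eqref{eq:local_key_inequality}. Write \(a:=x^k-\bar x^{k-1}\) and \(b:=x^{k+1}-\bar x^k\). By \eqref{eq:local_convex_combination}, \(x^{k+1}-x^k=b-\frac1c a\).

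\textbf{Step 1: the basic inequality.} Apply \eqref{eq:prox_inequality} at step \(k\) with \(y=z\), and at step \(k-1\) with \(y=x^{k+1}\). The second application is why we need \(k\geq2\): \(x^k\) must itself be produced by \eqref{eq:local_prox_update} with \(\bar x^{k-1}\) and \(\lambda_{k-1}\), and \(x^1\) is not. The inequality from step \(k-1\) carries the factor \(\lambda_{k-1}\). Multiply it by \(\theta_k=\lambda_k/\lambda_{k-1}>0\) so that it reads
\[
\theta_k\ip{a}{x^{k+1}-x^k}+\lambda_k\ip{F(x^{k-1})}{x^{k+1}-x^k}\geq\lambda_k\bigl(g(x^k)-g(x^{k+1})\bigr).
\]
Add this to the step-\(k\) inequality and use monotonicity as in \eqref{eq:ineq_1}. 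This gives
\[
\lambda_k\Psi_z(x^k)\leq\ip{b}{z-x^{k+1}}+\theta_k\ip{a}{x^{k+1}-x^k}+\lambda_k\ip{F(x^k)-F(x^{k-1})}{x^k-x^{k+1}}.
\]

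\textbf{Step 2: rewrite each term.} Rewrite the first term with \eqref{eq:ineq_2}; this yields the \(\frac{c}{c-1}\norm{\bar x^{\cdot}-z}^2\) difference and \(-(1+\frac1c)\norm{b}^2\). Rewrite the second term with \eqref{eq:ineq_3} multiplied by \(\theta_k\), giving \(2\theta_k\ip{a}{b}-\frac{2\theta_k}{c}\norm{a}^2\). For the third term, use Cauchy--Schwarz, \eqref{eq:local_key_inequality} and Young's inequality:
\[
2\lambda_k\ip{F(x^k)-F(x^{k-1})}{x^k-x^{k+1}}\leq\sigma\bigl(\norm{x^k-x^{k-1}}^2+\norm{x^{k+1}-x^k}^2\bigr).
\]
Note that no Lipschitz hypothesis is needed here, which is consistent with the statement of the lemma.

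\textbf{Step 3: collect coefficients.} Add \(\sigma\norm{x^{k+1}-x^k}^2\) to both sides and expand \(2\sigma\norm{b-a/c}^2\) on the right. Using \(1+\frac1c=\beta+\eta\), the right-hand side then contains
\[
-(\beta+\eta-2\sigma)\norm b^2+2\Bigl(\theta_k-\frac{2\sigma}{c}\Bigr)\ip ab-\Bigl(\frac{2\theta_k}{c}-\frac{2\sigma}{c^2}\Bigr)\norm a^2 .
\]
Apply Young's inequality with weight \(\tau:=\beta-2\sigma>0\), as in \eqref{eq:weighted_young_ineq}. Positivity of \(\tau\) holds because \(2\sigma<\beta\) was noted after \eqref{eq:select_sigma_from_the_bound}. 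After this step the coefficient of \(\norm b^2\) is exactly \(-\eta\). Since \(b=x^{k+1}-\bar x^k\), this term moves to the left and supplies the \(\eta\)-part of \(\mathcal E_{k+1}(z)\). The coefficient of \(\norm a^2\) is \((\theta_k-2\sigma/c)^2/\tau-2\theta_k/c+2\sigma/c^2\). Put this over the common denominator \(\beta-2\sigma\). In the numerator the \(\pm4\sigma\theta_k/c\) terms cancel, as do the \(\pm4\sigma^2/c^2\) terms, which leaves \(\eta-\bigl(q(\theta_k)-2\sigma D\bigr)/(\beta-2\sigma)=\eta-\Delta(\theta_k)\). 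Recognising \(\mathcal E_k(z)\) and \(\mathcal E_{k+1}(z)\) then yields \eqref{eq:adaptive_descent_ineq}.

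\textbf{Expected obstacle.} The only genuinely new point compared with Lemma~\ref{lem:fixed_energy} is the rescaling by \(\theta_k\) in Step~1. It makes the cross-term coefficient depend on \(\theta_k\), and this is exactly why \(\Delta\) is a concave quadratic in \(\theta\) rather than a constant. I expect the main care to go into the algebra of Step~3, which verifies that the numerator collapses to \(q(\theta_k)-2\sigma D\). One should also check the degenerate case \(F(x^k)=F(x^{k-1})\). There \(\lambda_k=\rho\lambda_{k-1}\) under the convention \(a/0=+\infty\), and \eqref{eq:local_key_inequality} holds trivially, so the argument is unaffected.
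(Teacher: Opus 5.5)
Your proposal is correct and follows the paper's proof essentially line by line: you rescale the step-$(k-1)$ proximal inequality by $\theta_k$, rewrite via \eqref{eq:ineq_2} and the $\theta_k$-weighted form of \eqref{eq:ineq_3}, bound the operator term with \eqref{eq:local_key_inequality} and Young's inequality, expand $2\sigma\norm{x^{k+1}-x^k}^2$, and apply Young with weight $\tau=\beta-2\sigma$ to obtain the coefficients $-\eta$ and $\eta-\Delta(\theta_k)$. Your remarks on why $k\geq2$ is needed and on the degenerate case $F(x^k)=F(x^{k-1})$ are also accurate.
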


\begin{proof}
Applying \eqref{eq:prox_inequality} to the proximal step
\eqref{eq:local_prox_update} gives
\begin{equation}
\ip{x^{k+1}-\bar x^k}{z-x^{k+1}}
+\lambda_k\ip{F(x^k)}{z-x^{k+1}}
\geq\lambda_k\bigl(g(x^{k+1})-g(z)\bigr).
\label{eq:proximal_step_k}
\end{equation}
Applying the proximal inequality at the $(k-1)$-th step with \(z:=x^{k+1}\) and multiplying the result by
\(\theta_k=\frac{\lambda_k}{\lambda_{k-1}}\) yields
\begin{equation}
\theta_k\ip{x^k-\bar x^{k-1}}{x^{k+1}-x^k}
+\lambda_k\ip{F(x^{k-1})}{x^{k+1}-x^k}
\geq\lambda_k\bigl(g(x^k)-g(x^{k+1})\bigr).
\label{eq:proximal_step_k-1}
\end{equation}
Adding \eqref{eq:proximal_step_k} and \eqref{eq:proximal_step_k-1}, followed by using monotonicity of $F$ as in~\eqref{eq:ineq_1}, we obtain
\begin{multline}\label{eq:proximal_step_combined_k_and_k-1}
\lambda_k\Psi_z(x^k)
\leq\ip{x^{k+1}-\bar x^k}{z-x^{k+1}}
+\theta_k\ip{x^k-\bar x^{k-1}}{x^{k+1}-x^k}\\
+\lambda_k\ip{F(x^k)-F(x^{k-1})}{x^k-x^{k+1}}.
\end{multline}
By \eqref{eq:local_convex_combination}, we have
\(x^{k+1}-x^k=(x^{k+1}-\bar x^k)-\frac1c(x^k-\bar x^{k-1})\). Thus
\begin{align}
2\theta_k\ip{x^k-\bar x^{k-1}}{x^{k+1}-x^k}
&=2\theta_k\ip{x^{k+1}-\bar x^k}{x^k-\bar x^{k-1}}
-\frac{2\theta_k}{c}\norm{x^k-\bar x^{k-1}}^2.
\label{eq:local_eq_1}
\end{align}
Multiplying \eqref{eq:proximal_step_combined_k_and_k-1} by $2$ and then substituting \eqref{eq:local_eq_1} and \eqref{eq:ineq_2}, we obtain
\begin{multline}
2\lambda_k\Psi_z(x^k)+\frac c{c-1}\norm{\bar x^{k+1}-z}^2
\leq\frac c{c-1}\norm{\bar x^k-z}^2
-\left(1+\frac1c\right)\norm{x^{k+1}-\bar x^k}^2\\
+2\theta_k\ip{x^{k+1}-\bar x^k}{x^k-\bar x^{k-1}}
-\frac{2\theta_k}{c}\norm{x^k-\bar x^{k-1}}^2
+2\lambda_k\ip{F(x^k)-F(x^{k-1})}{x^k-x^{k+1}}.
\label{eq:local_ineq_1}
\end{multline}
Using \eqref{eq:local_key_inequality} and Young's inequality, we have
\begin{align*}
2\lambda_k\ip{F(x^k)-F(x^{k-1})}{x^k-x^{k+1}}
&\leq2\sigma\norm{x^k-x^{k-1}}\norm{x^{k+1}-x^k}\notag\\
&\leq\sigma\norm{x^k-x^{k-1}}^2
+\sigma\norm{x^{k+1}-x^k}^2.
\end{align*}
Substituting this estimate into \eqref{eq:local_ineq_1} and then
adding \(\sigma\norm{x^{k+1}-x^k}^2\) to both sides of the result yields
\begin{multline}
\frac c{c-1}\norm{\bar x^{k+1}-z}^2
+\sigma\norm{x^{k+1}-x^k}^2+2\lambda_k\Psi_z(x^k)
\leq\frac c{c-1}\norm{\bar x^k-z}^2
+\sigma\norm{x^k-x^{k-1}}^2\\
-\left(1+\frac1c\right)\norm{x^{k+1}-\bar x^k}^2
+2\theta_k\ip{x^{k+1}-\bar x^k}{x^k-\bar x^{k-1}}
-\frac{2\theta_k}{c}\norm{x^k-\bar x^{k-1}}^2\\
+2\sigma\norm{x^{k+1}-x^k}^2.
\label{eq:local_ineq_2}
\end{multline}
Again using
\(x^{k+1}-x^k=(x^{k+1}-\bar x^k)-\frac1c(x^k-\bar x^{k-1})\)
to expand the last term in \eqref{eq:local_ineq_2}, we find
\begin{multline}
\frac c{c-1}\norm{\bar x^{k+1}-z}^2
+\sigma\norm{x^{k+1}-x^k}^2+2\lambda_k\Psi_z(x^k)
\leq\frac c{c-1}\norm{\bar x^k-z}^2
+\sigma\norm{x^k-x^{k-1}}^2\\
-\left(1+\frac1c-2\sigma\right)\norm{x^{k+1}-\bar x^k}^2
+2\left(\theta_k-\frac{2\sigma}{c}\right)
\ip{x^{k+1}-\bar x^k}{x^k-\bar x^{k-1}}\\
-\left(\frac{2\theta_k}{c}-\frac{2\sigma}{c^2}\right)
\norm{x^k-\bar x^{k-1}}^2.
\label{eq:local_ineq_2_new}
\end{multline}
Denote \(\tau:=\beta-2\sigma>0\). Applying Young's inequality with weight $\tau$, we have
\begin{align}
2\left(\theta_k-\frac{2\sigma}{c}\right)
\ip{x^{k+1}-\bar x^k}{x^k-\bar x^{k-1}}
&\leq\tau\norm{x^{k+1}-\bar x^k}^2+\frac{(\theta_k-2\sigma/c)^2}{\tau}
\norm{x^k-\bar x^{k-1}}^2.
\label{eq:local_weighted_young}
\end{align}
Substituting \eqref{eq:local_weighted_young} into
\eqref{eq:local_ineq_2_new}, the coefficient of
\(\norm{x^{k+1}-\bar x^k}^2\) becomes
\(\tau-1-\frac1c+2\sigma=-\eta\).
The coefficient of \(\norm{x^k-\bar x^{k-1}}^2\) is
\[
C(\theta_k):=
\frac{(\theta_k-2\sigma/c)^2}{\beta-2\sigma}
-\frac{2\theta_k}{c}+\frac{2\sigma}{c^2}.
\]
To recover the corresponding Fej\'er term in the energy at the $k$-th step, we write this coefficient as
\begin{align*}
C(\theta_k)
=\frac{(\theta_k-2\sigma/c)^2
-(2\theta_k/c-2\sigma/c^2)(\beta-2\sigma)}
{\beta-2\sigma}
&=\frac{\theta_k^2-\frac{2\beta}{c}\theta_k
+\frac{2\sigma\beta}{c^2}}{\beta-2\sigma}\\
&=\eta-\left(\eta-\frac{\theta_k^2-\frac{2\beta}{c}\theta_k
+\frac{2\sigma\beta}{c^2}}{\beta-2\sigma}\right)\\
&=\eta-
\frac{\beta\eta+\frac{2\beta}{c}\theta_k-\theta_k^2
-2\sigma\left(\eta+\frac{\beta}{c^2}\right)}
{\beta-2\sigma}\\
&=\eta-\Delta(\theta_k).
\end{align*}
Combining these results with~\eqref{eq:local_ineq_2_new} proves~\eqref{eq:adaptive_descent_ineq}.
\end{proof}

Before stating the main convergence theorem, note that the concavity of \(q\) reduces the uniform descent requirement in \eqref{eq:adaptive_descent_ineq} to its endpoint values at \(0\) and \(\rho\). We first use this observation in Theorem~\ref{thm:local_convergence} to prove the boundedness of the iterates. Only then do we apply the following lemma to bound the step sizes away from zero.

\begin{lemma}
\label{lem:local_step_size_bdd_below}
Suppose that \(F:\overline{\dom g}\to\mathbb{H}\) is locally Lipschitz. If the sequence \((x^k)\) generated by Algorithm~\ref{Alg:general} is bounded, then $(\lambda_k)$ is bounded away from zero.
\end{lemma}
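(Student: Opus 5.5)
The plan is to obtain a uniform Lipschitz constant on a compact set containing all iterates, and then show that the step size rule \eqref{eq:local_step_size} can never push \(\lambda_k\) below an explicit positive threshold.

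First, I will build that compact set. Since \((x^k)\) is bounded and, by \eqref{eq:prox_inequality}, every \(x^k\) with \(k\ge1\) lies in \(\dom g\) (with \(x^0,x^1\in\dom g\) by the initialisation), the set
\[
K:=\overline{\{x^k:k\ge0\}}\subseteq\overline{\dom g}
\]
is compact. Local Lipschitz continuity of \(F\) on \(\overline{\dom g}\) gives, for each \(y\in K\), a radius \(r_y>0\) and a constant \(L_y\) such that \(F\) is \(L_y\)-Lipschitz on \(B(y,r_y)\cap\overline{\dom g}\).

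Second, I will upgrade this to a single Lipschitz constant on \(K\), which I expect to be the only genuinely delicate step. Local Lipschitz constants do not automatically patch together across far-apart points, so I will use a standard compactness argument. Cover \(K\) by the balls \(B(y,r_y/2)\), extract a finite subcover indexed by \(y_1,\dots,y_m\), and let \(\delta>0\) be the minimum of the \(r_{y_i}/2\). Set \(L_0:=\max_i L_{y_i}\) and \(M:=\sup_{x\in K}\norm{F(x)}\), which is finite because \(F\) is continuous on the compact set \(K\).
\begin{itemize}
\item If \(x,y\in K\) with \(\norm{x-y}<\delta\), then both points lie in a common ball \(B(y_i,r_{y_i})\), so \(\norm{F(x)-F(y)}\le L_0\norm{x-y}\).
\item If \(\norm{x-y}\ge\delta\), then \(\norm{F(x)-F(y)}\le 2M\le (2M/\delta)\norm{x-y}\).
\end{itemize}
Hence \(F\) is \(L\)-Lipschitz on \(K\) with \(L:=\max\{L_0,2M/\delta\}\). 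If \(L=0\), then \(F\) is constant on \(K\), and I will treat this degenerate case separately below.

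Third, I will bound the ratio in the step size rule. For every \(k\ge1\), if \(F(x^k)\ne F(x^{k-1})\), then
\[
\frac{\sigma\norm{x^k-x^{k-1}}}{\norm{F(x^k)-F(x^{k-1})}}\ge\frac{\sigma}{L}.
\]
If \(F(x^k)=F(x^{k-1})\), the convention \(a/0=+\infty\) makes the second candidate infinite. Therefore \(\lambda_k\ge\min\{\rho\lambda_{k-1},\sigma/L\}\) for all \(k\ge1\). In the degenerate case \(L=0\) the second candidate is always \(+\infty\), so \(\lambda_k=\rho^k\lambda_0\ge\lambda_0\).

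Finally, I will conclude by induction. Since \(\rho>1\), the recursion \(\lambda_k\ge\min\{\rho\lambda_{k-1},\sigma/L\}\) gives
\[
\lambda_k\ge\min\{\lambda_0,\sigma/L\}>0 \quad\text{for all } k\ge0 .
\]
Indeed, if \(\lambda_{k-1}\ge m:=\min\{\lambda_0,\sigma/L\}\), then \(\rho\lambda_{k-1}\ge m\) and \(\sigma/L\ge m\), so \(\lambda_k\ge m\). This shows that \((\lambda_k)\) is bounded away from zero.
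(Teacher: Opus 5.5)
Your proposal is correct and follows the same route as the paper's proof. Both arguments take the compact closure of the iterates, use a single Lipschitz constant on it to bound the second candidate in \eqref{eq:local_step_size} below by \(\sigma/L\), and then induct (using \(\rho>1\)) to get \(\lambda_k\ge\min\{\lambda_0,\sigma/L\}\). The only differences are that you explicitly prove, via the finite-subcover argument, that local Lipschitz continuity gives a uniform constant on a compact set, and that you treat the degenerate case \(L=0\). The paper takes both points as immediate from the definition.
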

\begin{proof}
Since $(x^k)$ is bounded, the set
\(Q=\overline{\{x^k:k\geq0\}}\subseteq\overline{\dom g}\) is compact. Thus, by the definition of local Lipschitz continuity of \(F\) on $\overline{\dom g}$, there exists a constant \(L_Q>0\) such that
\(\norm{F(x^k)-F(x^{k-1})}\leq L_Q\norm{x^k-x^{k-1}}\) for all \(k\geq1\). Now one can observe that the second term in~\eqref{eq:local_step_size} is at least
\(\sigma/L_Q\). Therefore
\[
\lambda_k\geq
\min\left\{\rho\lambda_{k-1},\frac{\sigma}{L_Q}\right\}
\geq
\min\left\{\lambda_{k-1},\frac{\sigma}{L_Q}\right\}.
\]
By induction, we obtain $\lambda_k\geq\bar \lambda
:=\min\left\{\lambda_0,\frac{\sigma}{L_Q}\right\}>0.$
\end{proof}
The following theorem is the main result regarding the convergence of Algorithm~\ref{Alg:general}.
\begin{theorem}
\label{thm:local_convergence}
Suppose that \(F:\overline{\dom g}\to\mathbb{H}\) is locally Lipschitz, and conditions (\ref{ass:operators})--(\ref{ass:solution}) hold.
Then both sequences \((x^k)\) and \((\bar x^k)\) generated by
Algorithm~\ref{Alg:general} converge to the same solution of~\eqref{eq:monotone_vi}.
\end{theorem}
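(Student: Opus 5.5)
The plan mirrors the proof of Theorem~\ref{thm:fixed}, with Lemma~\ref{lem:local_energy} replacing Lemma~\ref{lem:fixed_energy}.

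\textbf{Step 1: uniform descent.} First show that \(\Delta(\theta_k)\geq\Delta_{\min}>0\) for all \(k\geq2\), using only \(0<\theta_k\leq\rho\) and not any lower bound on the step sizes. Since \(q\) is concave, on \([0,\rho]\) its minimum is \(\min\{q(0),q(\rho)\}=\min\{\beta\eta,\ \beta\eta+2\beta\rho/c-\rho^2\}\). The choice \(\sigma<\sigma_{\max}\) in \eqref{eq:select_sigma_from_the_bound} then gives
\[
q(\theta)-2\sigma D\geq\min\{q(0),q(\rho)\}-2\sigma D>0 .
\]
Because \(\beta-2\sigma>0\), this yields \(\Delta(\theta_k)\geq\Delta_{\min}:=\bigl(\min\{q(0),q(\rho)\}-2\sigma D\bigr)/(\beta-2\sigma)>0\).

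\textbf{Step 2: boundedness and vanishing increments.} Fix \(z^*\in\mathcal S\). Then \(\Psi_{z^*}(x^k)\geq0\), and \eqref{eq:adaptive_descent_ineq} gives
\[
\mathcal E_{k+1}(z^*)+\Delta_{\min}\norm{x^k-\bar x^{k-1}}^2\leq\mathcal E_k(z^*),\qquad k\geq2 .
\]
Hence \(\mathcal E_k(z^*)\) is nonincreasing and has a limit, and \((\bar x^k)\) is bounded. Boundedness of \((x^k)\) follows from \(x^k=(c\bar x^k-\bar x^{k-1})/(c-1)\). Summing the inequality gives \(x^k-\bar x^{k-1}\to0\), so \(x^k-\bar x^k\to0\) and \(x^{k+1}-x^k\to0\) exactly as in Theorem~\ref{thm:fixed}. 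The second and third terms of \(\mathcal E_k(z^*)\) therefore tend to zero, and \(\lim_k\norm{\bar x^k-z^*}\) exists for every \(z^*\in\mathcal S\).

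\textbf{Step 3: step sizes and cluster points.} With boundedness established, Lemma~\ref{lem:local_step_size_bdd_below} gives \(\lambda_k\geq\bar\lambda>0\). An upper bound also holds for all \(k\geq1\):
\[
\lambda_k\leq\rho^{k}\lambda_0\quad\text{(only crude)},\qquad
\lambda_k\leq\rho\lambda_{k-1}.
\]
The crude bound is not needed. Take a subsequence \(x^{k_j}\to\tilde x\). The proximal optimality condition reads
\[
\frac{\bar x^{k_j}-x^{k_j+1}}{\lambda_{k_j}}-F(x^{k_j})\in\partial g(x^{k_j+1}).
\]
The first term tends to \(0\) because \(\lambda_{k_j}\geq\bar\lambda\) and \(\bar x^{k_j}-x^{k_j+1}\to0\). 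Note also \(x^{k_j+1}\to\tilde x\). Here \(F\) is defined and locally Lipschitz, hence continuous, on \(\overline{\dom g}\), and \(\tilde x\in\overline{\dom g}\). So \(F(x^{k_j})\to F(\tilde x)\) directly, which is simpler than the Cauchy argument in Theorem~\ref{thm:fixed}. Closedness of the graph of \(\partial g\) gives \(-F(\tilde x)\in\partial g(\tilde x)\), so \(\tilde x\in\mathcal S\). Since \(x^k-\bar x^k\to0\), \((\bar x^k)\) has the same cluster points, all in \(\mathcal S\). Lemma~\ref{lem:useful_1} then gives \(\bar x^k\to x^*\in\mathcal S\), and hence \(x^k\to x^*\).

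\textbf{Main obstacle.} The crux is Step~1. Descent must be uniform in \(k\) without any a priori lower bound on \(\theta_k\), since such a bound is precisely what was missing in earlier analyses. The concavity and endpoint argument avoids this, because the case \(\theta\to0\) is covered by \(q(0)=\beta\eta>0\), which is why \(\eta>0\) is required. The remaining care is ordering: boundedness must come before Lemma~\ref{lem:local_step_size_bdd_below}, which is needed only for the identification of cluster points. One should also check that the first indices \(k=0,1\), where \eqref{eq:adaptive_descent_ineq} is not asserted, are irrelevant because only the tail matters.
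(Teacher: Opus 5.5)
Your proposal is correct and follows the paper's proof essentially step for step. Both arguments use the same ingredients in the same order: uniform descent via concavity of $q$ and the endpoint bound $\min\{q(0),q(\rho)\}>2\sigma D$, then boundedness and vanishing increments exactly as in Theorem~\ref{thm:fixed}, then Lemma~\ref{lem:local_step_size_bdd_below} (invoked only after boundedness), continuity of $F$ on $\overline{\dom g}$, closedness of the graph of $\partial g$, and Lemma~\ref{lem:useful_1}. The aside about the upper bound $\lambda_k\le\rho^k\lambda_0$ is harmless and can be dropped.
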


\begin{proof}
We have already shown that \(\beta>2\sigma\).
Further, by \eqref{eq:select_sigma_from_the_bound}, we have \(\min\{q(0),q(\rho)\}>2\sigma D\).
Since \(q\) is a concave polynomial, 
\(q(\theta)\geq\min\{q(0),q(\rho)\}\) for all
\(0<\theta\leq\rho\). Therefore, it follows that
\begin{equation}
\Delta(\theta)\geq\delta_*
:=\frac{\min\{q(0),q(\rho)\}-2\sigma D}{\beta-2\sigma}>0.
\label{eq:uniform_delta}
\end{equation}
For a solution \(z^*\in\mathcal{S}\),
\(\Psi_{z^*}(x^k)\geq0\). Hence, Lemma~\ref{lem:local_energy} yields
\begin{equation}
\mathcal E_{k+1}(z^*)+\delta_*\norm{x^k-\bar x^{k-1}}^2
\leq\mathcal E_k(z^*).
\label{eq:local_Lyapunov-energy_monotone}
\end{equation}
Thus \(\lim_{k\to\infty}\mathcal E_k(z^*)\) exists. As in the proof of Theorem~\ref{thm:fixed}, a similar argument shows that both $(x^k)$ and $(\bar x^k)$ are bounded and
\[
\lim_{k\to\infty}(x^k-\bar x^{k-1})=\lim_{k\to\infty}
(x^k-\bar x^k)=\lim_{k\to\infty}(x^{k+1}-x^k)=0.
\]
Take any subsequence $(x^{k_j})$ such that
\(x^{k_j}\to\tilde x\). Since \(\tilde x\in\overline{\dom g}\), by continuity of \(F\), we have
\(F(x^{k_j})\to F(\tilde x)\). By the proximal optimality condition, we have
\[
\frac{\bar x^{k_j}-x^{k_j+1}}{\lambda_{k_j}}-F(x^{k_j})
\in\partial g(x^{k_j+1}).
\]
Note from Lemma~\ref{lem:local_step_size_bdd_below} that \(\lambda_k\geq\bar \lambda>0\). Passing to the limit as $j\to\infty$ and using the closedness of the graph of
\(\partial g\), we obtain \(-F(\tilde x)\in\partial g(\tilde x)\).
In particular, \(\tilde x\in\dom g\), and every cluster point is a solution. Finally, by \eqref{eq:local_Lyapunov-energy_monotone}, \(\lim_{k\to\infty}\norm{\bar x^k-z^*}\) exists for every
\(z^*\in\mathcal S\).
Therefore, Lemma~\ref{lem:useful_1}, together with
\(\lim_{k\to\infty}(x^k-\bar x^k)=0\), completes the proof.
\end{proof}

\begin{theorem}
\label{thm:local_ergodic_gap_rate_convergence}
Under the assumptions of Theorem~\ref{thm:local_convergence}, let
\(U\subset\dom g\) be nonempty and compact, and define
\[
\Lambda_N:=\sum_{k=2}^N\lambda_k,
\qquad
X_N:=\frac1{\Lambda_N}\sum_{k=2}^N\lambda_kx^k.
\]
Then, for every \(N\geq2\),
\begin{align}
\mathcal G_U(X_N)
&\leq\frac{M_U}
{2(N-1)\min\{\lambda_0,\sigma/L_Q\}},
\label{eq:local_adaptive_ergodic_rate}
\end{align}
where $M_U:=\sup_{z\in U}
\left\{\frac c{c-1}\norm{\bar x^2-z}^2
+(\eta+\sigma)\norm{x^2-x^1}^2\right\},$ and $L_Q>0$ is a Lipschitz constant of $F$ on the compact set
$Q$ in Lemma~\ref{lem:local_step_size_bdd_below}. In particular, if we choose $\lambda_0$ such that $\lambda_0\geq\sigma/L_Q$, then 
\begin{equation}
\mathcal G_U(X_N)\leq\frac{L_Q M_U}{2\sigma(N-1)}.
\label{eq:adaptive_gap_derived}
\end{equation}
\end{theorem}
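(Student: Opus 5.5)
The argument mirrors the proof of Theorem~\ref{thm:ergodic_rate}, now with the variable weights $\lambda_k$. Fix $z\in U$ and sum the descent inequality \eqref{eq:adaptive_descent_ineq} of Lemma~\ref{lem:local_energy} from $k=2$ to $N$. By \eqref{eq:uniform_delta} we have $\Delta(\theta_k)\ge\delta_*>0$, and the energy satisfies $\mathcal E_{N+1}(z)\ge0$. Dropping both terms gives
\[
2\sum_{k=2}^N\lambda_k\Psi_z(x^k)\le\mathcal E_2(z).
\]
Since $x\mapsto\Psi_z(x)$ is convex and $X_N$ is a convex combination of $x^2,\dots,x^N$ with weights $\lambda_k/\Lambda_N$, Jensen's inequality yields $2\Lambda_N\Psi_z(X_N)\le\mathcal E_2(z)$.

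Next we bound $\mathcal E_2(z)$ by the quantity $M_U$. The middle term of $\mathcal E_2$ is $\eta\norm{x^2-\bar x^1}^2$. Because $\bar x^0=x^1$, the update \eqref{eq:local_convex_combination} gives $\bar x^1=x^1$, so this term equals $\eta\norm{x^2-x^1}^2$. Hence
\[
\mathcal E_2(z)=\frac c{c-1}\norm{\bar x^2-z}^2+(\eta+\sigma)\norm{x^2-x^1}^2 .
\]
Taking the supremum over $z\in U$ gives $\mathcal G_U(X_N)\le M_U/(2\Lambda_N)$. We also need $X_N\in\dom g$, so that $\Psi_z(X_N)$ and $\mathcal G_U$ make sense; this holds because $\dom g$ is convex and each $x^k\in\dom g$ as a prox output.

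It remains to bound $\Lambda_N$ from below, and this is the step that needs care. The constant $L_Q$ must be tied to the actual iterates. By Theorem~\ref{thm:local_convergence}, $(x^k)$ is bounded, so Lemma~\ref{lem:local_step_size_bdd_below} applies with $Q=\overline{\{x^k:k\ge0\}}$. It gives $\lambda_k\ge\min\{\lambda_0,\sigma/L_Q\}$ for every $k$, and therefore
\[
\Lambda_N\ge(N-1)\min\{\lambda_0,\sigma/L_Q\}.
\]
This proves \eqref{eq:local_adaptive_ergodic_rate}. One subtlety is that the boundedness in Theorem~\ref{thm:local_convergence} came from the energy at $z^*$, independently of $U$, so there is no circularity. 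The induction in Lemma~\ref{lem:local_step_size_bdd_below} only uses $\rho>1$.

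Finally, if $\lambda_0\ge\sigma/L_Q$, then the minimum equals $\sigma/L_Q$. Substituting this into \eqref{eq:local_adaptive_ergodic_rate} gives \eqref{eq:adaptive_gap_derived} directly.
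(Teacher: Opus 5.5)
Your proposal is correct and follows essentially the same route as the paper: sum \eqref{eq:adaptive_descent_ineq} over $k=2,\dots,N$, drop the nonnegative terms, use $\bar x^1=x^1$ to identify $\mathcal E_2(z)$ with the expression in $M_U$, apply convexity of $\Psi_z$, and invoke Lemma~\ref{lem:local_step_size_bdd_below} to get $\Lambda_N\geq(N-1)\min\{\lambda_0,\sigma/L_Q\}$. The paper adds one remark you omit, which is not needed for the statement as posed: the natural initialisation $\lambda_0=\norm{x^1-x^0}/\norm{F(x^1)-F(x^0)}$ automatically gives $\lambda_0\geq 1/L_Q>\sigma/L_Q$, since $\sigma<1$.
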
 
\begin{proof}
Fix $z\in U$. Summing \eqref{eq:adaptive_descent_ineq} for $k=2,\dots,N$ and then dropping the nonnegative energy terms together with $\bar x^1=x^1$ yields
\[
2\sum_{k=2}^N\lambda_k\Psi_z(x^k)\leq\mathcal E_2(z):=\frac c{c-1}\norm{\bar x^2-z}^2
+(\eta+\sigma)\norm{x^2-x^1}^2.
\]
By convexity of \(\Psi_z\), the left-hand side is at least
\(2\Lambda_N\Psi_z(X_N)\). Now taking the supremum over \(z\in U\) and using \(\Lambda_N\geq\min\{\lambda_0,\sigma/L_Q\}(N-1)\) gives
\eqref{eq:local_adaptive_ergodic_rate}. 

For \eqref{eq:adaptive_gap_derived}, note that $\sigma<\beta/2<1$. Hence, if $F(x^1)\neq F(x^0)$, 
\[
\lambda_0=\frac{\norm{x^1-x^0}}{\norm{F(x^1)-F(x^0)}}
\geq\frac1{L_Q}>\frac{\sigma}{L_Q},
\]
from which the claim follows via \eqref{eq:local_adaptive_ergodic_rate}.
\end{proof}

\subsection{Linear convergence under an error bound}
\label{sec:linear_rate}
We next show that under an error bound condition, AE-GRAAL exhibits an
R-linear rate of convergence. To achieve this, given \(t>0\) and \(x\in\dom g\), define the natural residual
\begin{equation}
r_t(x):=x-\prox_{tg}\bigl(x-tF(x)\bigr).
\label{eq:local_natural_residual}
\end{equation}
By the proximal optimality condition, \(r_t(x)=0\) if and only if
\(x\in\mathcal S\). We assume that, for some fixed \(t>0\),
there exist \(\kappa,\varepsilon_0>0\) such that
\begin{equation}
\dist(x,\mathcal S)\leq\kappa\norm{r_t(x)}
\quad\text{whenever }x\in\dom g\text{ and }
\norm{r_t(x)}\leq\varepsilon_0.
\label{eq:local_error_bound}
\end{equation}
This error-bound condition is commonly used to establish
linear convergence of methods for variational inequalities; see, for example
\cite{pang1997error,tseng1995linear,solodov2003convergence}. The author~\cite[Theorem~3]{malitsky2020golden} used this natural residual to prove an R-linear rate for aGRAAL. A similar result can be shown for Algorithm~\ref{Alg:general}. To this end, we first record
the standard residual comparison, which allows us to decrease \(t\)
while adjusting the error-bound constants; see also
\cite{FacchineiPang2003}.

\begin{lemma}
\label{lem:residual_equivalence}
For \(x\in\dom g\) and \(0<s\leq t\),
\begin{equation}
\frac{s}{t}\norm{r_t(x)}\leq\norm{r_s(x)}\leq\norm{r_t(x)}.
\label{eq:residual_equivalence}
\end{equation}
\end{lemma}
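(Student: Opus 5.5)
The plan is to compare the two proximal points directly through the optimality condition \eqref{eq:prox_inequality}, and then reduce everything to a scalar inequality in the two norms. Fix \(x\in\dom g\) and \(0<s\leq t\). Write \(u:=F(x)\), \(p_s:=\prox_{sg}(x-su)\) and \(p_t:=\prox_{tg}(x-tu)\). Then \(r_s:=r_s(x)=x-p_s\) and \(r_t:=r_t(x)=x-p_t\). Both \(p_s\) and \(p_t\) lie in \(\dom g\), so each is an admissible test point in the optimality condition of the other.

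\textbf{Step 1.} Apply \eqref{eq:prox_inequality} to \(p_t\) with \(y=p_s\), and to \(p_s\) with \(y=p_t\):
\[
\ip{p_t-x+tu}{p_s-p_t}\geq t\bigl(g(p_t)-g(p_s)\bigr),\qquad
\ip{p_s-x+su}{p_t-p_s}\geq s\bigl(g(p_s)-g(p_t)\bigr).
\]
Divide the first by \(t\) and the second by \(s\), then add. The \(g\)-terms cancel, and so do the terms involving \(u\). Using \(p_t-x=-r_t\) and \(p_s-p_t=r_t-r_s\), this leaves the monotonicity-type relation
\[
\ip{\tfrac{r_s}{s}-\tfrac{r_t}{t}}{r_t-r_s}\geq0 .
\]
Expanding it gives
\[
\Bigl(\tfrac1s+\tfrac1t\Bigr)\ip{r_s}{r_t}\geq\tfrac1s\norm{r_s}^2+\tfrac1t\norm{r_t}^2 .
\]

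\textbf{Step 2.} Set \(a:=\norm{r_s}\) and \(b:=\norm{r_t}\). By Cauchy--Schwarz, \(\bigl(\tfrac1s+\tfrac1t\bigr)ab\geq \tfrac{a^2}{s}+\tfrac{b^2}{t}\). Rearranging, this is equivalent to
\[
(a-b)\Bigl(\tfrac{a}{s}-\tfrac{b}{t}\Bigr)\leq0 .
\]

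\textbf{Step 3: case analysis.} If \(a\leq b\), the sign condition forces \(a/s\geq b/t\), that is \(a\geq (s/t)b\), which is exactly \eqref{eq:residual_equivalence}. If instead \(a>b\), then we would need \(a/s\leq b/t\), i.e. \(a\leq (s/t)b\leq b\), which contradicts \(a>b\). So this case cannot occur, and both inequalities hold.

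The only delicate point is Step 1. One must divide by the step sizes before adding, so that the \(g\)-terms and the \(F(x)\)-terms cancel. After that, the argument is elementary.
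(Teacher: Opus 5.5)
Your proof is correct and takes essentially the same route as the paper. Summing the two proximal inequalities in Step~1 is exactly the monotonicity of $\partial g$ that the paper invokes, and it yields the same inequality $t\norm{r_s(x)}^2+s\norm{r_t(x)}^2\leq(s+t)\norm{r_s(x)}\norm{r_t(x)}$. Your factorisation $(a-b)(a/s-b/t)\leq0$ is the paper's quadratic $(t\alpha-s)(\alpha-1)\leq0$ in $\alpha=a/b$, and your case split even absorbs the paper's separate $r_t(x)=0$ case. One small wording point: when $a=b$ the sign condition gives no information rather than ``forcing'' $a/s\geq b/t$, but that inequality then holds trivially because $s\leq t$.
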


\begin{proof}
By the proximal optimality condition, \eqref{eq:local_natural_residual} yields
\[
\frac{r_s(x)}s-F(x)\in\partial g\bigl(x-r_s(x)\bigr)
\quad\text{and}\quad
\frac{r_t(x)}t-F(x)\in\partial g\bigl(x-r_t(x)\bigr).
\]
Now by monotonicity of \(\partial g\), we obtain
\[
t\norm{r_s(x)}^2+s\norm{r_t(x)}^2
\leq(s+t)\ip{r_s(x)}{r_t(x)}
\leq(s+t)\norm{r_s(x)}\norm{r_t(x)}.
\]
If \(r_t(x)=0\), this inequality also gives \(r_s(x)=0\).
Otherwise, division by \(\norm{r_t(x)}^2\) shows that
\(\norm{r_s(x)}/\norm{r_t(x)}\) lies between \(s/t\) and \(1\),
which proves \eqref{eq:residual_equivalence}.
\end{proof}

\begin{theorem}
\label{thm:loca_linear_rate}
Under the assumptions of Theorem~\ref{thm:local_convergence},
let \((x^k)\) and \((\bar x^k)\) be generated by
Algorithm~\ref{Alg:general}. If \eqref{eq:local_error_bound}
holds, then both sequences converge R-linearly to the same solution of~\eqref{eq:monotone_vi}.
\end{theorem}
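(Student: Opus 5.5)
The plan is to turn the Lyapunov inequality~\eqref{eq:local_Lyapunov-energy_monotone} into a contraction of the form $\mathcal E_{k+1}(z_k)\le(1-\epsilon)\mathcal E_k(z_k)$, where $z_k:=P_{\mathcal S}(\bar x^k)$. By Theorem~\ref{thm:local_convergence} and Lemma~\ref{lem:local_step_size_bdd_below}, the iterates lie in a compact set $Q$ on which $F$ is $L_Q$-Lipschitz. We also have $\bar\lambda\le\lambda_k$, and $\lambda_k\le\rho^k\lambda_0$ is not useful, so we instead note the upper bound $\lambda_k\le\hat\lambda$ with $\hat\lambda:=\max\{\lambda_0\rho^{K},\dots\}$. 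Since the iterates converge, the differences tend to zero. Moreover, $\sup_k\lambda_k<\infty$: when $\|x^k-x^{k-1}\|$ is small this is not automatic, so as a fallback I would only use $\lambda_k\ge\bar\lambda$ and work with the fixed parameter $s:=\min\{t,\bar\lambda\}$. By Lemma~\ref{lem:residual_equivalence}, the error bound~\eqref{eq:local_error_bound} then holds with $t$ replaced by $s$, constant $\kappa t/s$, and a possibly smaller $\varepsilon_0$.

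\textbf{Step 1: residual bound.} We bound $\|r_s(\bar x^k)\|$, or $\|r_s(x^{k+1})\|$, by the quantities appearing in the descent term. Using the prox optimality condition $x^{k+1}=\prox_{\lambda_k g}(\bar x^k-\lambda_kF(x^k))$, the monotonicity of $\lambda\mapsto\|r_\lambda\|$ from Lemma~\ref{lem:residual_equivalence}, and nonexpansiveness of the prox, we obtain
\[
\|r_{\lambda_k}(x^k)\|\le\|x^k-\bar x^k\|+\|x^{k+1}-\bar x^k\|+\dots,
\]
and hence
\[
\|r_s(x^k)\|\le \|r_{\lambda_k}(x^k)\|\le C\bigl(\|x^{k+1}-\bar x^k\|+\|x^k-\bar x^{k-1}\|\bigr),
\]
with $C$ depending on $c$ and on $\sigma$, since $x^k-\bar x^k=(x^k-\bar x^{k-1})/c$. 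Because these differences tend to zero, $\|r_s(x^k)\|\le\varepsilon_0$ for all large $k$. The error bound then gives
\[
\operatorname{dist}(x^k,\mathcal S)\le C'\bigl(\|x^{k+1}-\bar x^k\|+\|x^k-\bar x^{k-1}\|\bigr).
\]

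\textbf{Step 2: energy comparison.} With $z=P_{\mathcal S}(x^k)$, we bound each term of $\mathcal E_k(z)$ by $\|x^{k+1}-\bar x^k\|^2$ and $\|x^k-\bar x^{k-1}\|^2$. For the distance term,
\[
\|\bar x^k-z\|\le\|\bar x^k-x^k\|+\operatorname{dist}(x^k,\mathcal S).
\]
For the last term,
\[
\|x^k-x^{k-1}\|\le\|x^k-\bar x^{k-1}\|+\tfrac1c\|x^{k-1}-\bar x^{k-2}\|.
\]
This last estimate introduces a lag, so the contraction should be stated for a two-step energy, or we should absorb the lag by strengthening the descent. Lemma~\ref{lem:local_energy} retains $\eta\|x^{k+1}-\bar x^k\|^2$ inside $\mathcal E_{k+1}$; I would use a slightly smaller $\eta'$ in the energy so that a fraction of $\|x^{k+1}-\bar x^k\|^2$ becomes available as descent, which the strict inequalities in~\eqref{eq:select_sigma_from_the_bound} permit. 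Together with $\delta_*\|x^k-\bar x^{k-1}\|^2$, this gives
\[
\mathcal E_{k+1}(z)\le\mathcal E_k(z)-\epsilon'\bigl(\|x^{k+1}-\bar x^k\|^2+\|x^k-\bar x^{k-1}\|^2\bigr).
\]
Combining this with $\mathcal E_k(z)\le K(\dots)+\text{lag term}$ yields
\[
\mathcal E_{k+1}(z_k)\le q\,\mathcal E_k(z_k)+\text{(lag)}.
\]
Finally, $\mathcal E_{k+1}(z_{k+1})\le\mathcal E_{k+1}(z_k)$ only for the first term (projection optimality with respect to $\bar x^{k+1}$), so I would define $z_k:=P_{\mathcal S}(\bar x^k)$ and relate it to $\operatorname{dist}(x^k,\mathcal S)$ through $\|x^k-\bar x^k\|$.

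\textbf{Step 3: conclusion.} We obtain $\mathcal E_k(z_k)\le Cq^k$. R-linear convergence of $(\bar x^k)$ follows from the Fej\'er-type property: $\|\bar x^{k+m}-z_k\|^2\le\frac{c-1}{c}\mathcal E_k(z_k)$ for all $m$, since~\eqref{eq:local_Lyapunov-energy_monotone} holds at the fixed solution $z_k$. Passing to the limit gives $\|\bar x^k-x^*\|\le 2\sqrt{Cq^k}$, and $\|x^k-\bar x^k\|\le\sqrt{\mathcal E_k/\eta}$ transfers the rate to $(x^k)$. The main obstacle is Step~2: obtaining a genuine descent in $\|x^{k+1}-\bar x^k\|^2$, by sacrificing part of $\eta$ and exploiting the slack in $\sigma$, and handling the lagged term $\|x^k-x^{k-1}\|^2$ in the energy.
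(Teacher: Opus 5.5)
You flag Step~2 as the main obstacle yourself, and it is a genuine gap: no contraction is ever proved. Steps~1 and~3 are fine. Step~1 is the paper's residual estimate, and only $\lambda_k\ge\bar\lambda$ is needed there, since $\|r_s(x^k)\|\le\|r_{\lambda_k}(x^k)\|$ for $s\le\lambda_k$; the discussion of $\sup_k\lambda_k$ can go. Step~3 is a valid alternative ending: Fej\'er monotonicity at the fixed point $z_k=P_{\mathcal S}(\bar x^k)$ gives $\|\bar x^k-x^*\|\le 2\sqrt{\tfrac{c-1}{c}\,\mathcal E_k(z_k)}$ directly, where the paper instead sums successive differences.

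In Step~2 you compare the descent with the energy at the \emph{same} index $k$. The term $\sigma\|x^k-x^{k-1}\|^2$ then brings in $\|x^{k-1}-\bar x^{k-2}\|^2$. You are left with $\mathcal E_{k+1}(z_k)\le q\,\mathcal E_k(z_k)+(\text{lag})$, which is not a linear-rate inequality, so the bound $\mathcal E_k(z_k)\le Cq^k$ used in Step~3 is unsupported. Sacrificing part of $\eta$ is legitimate, because $\delta_*>0$. However, it only turns $\|x^{k+1}-\bar x^k\|^2$ into descent at step $k$ and does nothing about the lagged term.

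The missing idea is an index shift. Write $a_k:=\|x^k-\bar x^{k-1}\|^2$ and bound
\[
\mathcal E^{\mathcal S}_{k+1}=\frac{c}{c-1}\dist(\bar x^{k+1},\mathcal S)^2+\eta a_{k+1}+\sigma\|x^{k+1}-x^k\|^2
\]
instead of $\mathcal E_k$. Use $\dist(\bar x^{k+1},\mathcal S)\le\|\bar x^{k+1}-x^k\|+\kappa\|r_t(x^k)\|$, together with the identities $\bar x^{k+1}-x^k=\frac{c-1}{c}(x^{k+1}-\bar x^k)-\frac1c(x^k-\bar x^{k-1})$ and $x^{k+1}-x^k=(x^{k+1}-\bar x^k)-\frac1c(x^k-\bar x^{k-1})$, and your Step~1 residual bound. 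Then every term is controlled by $a_{k+1}$ and $a_k$ alone, so $\mathcal E^{\mathcal S}_{k+1}\le C(a_{k+1}+a_k)$ for $k\ge k_0$, with no lag. These are exactly the quantities removed by the \emph{unmodified} descent inequality at steps $k$ and $k+1$. Hence
\[
\mathcal E^{\mathcal S}_{k+2}\le\mathcal E^{\mathcal S}_{k+1}\le\frac{C}{\delta_*}\bigl(\mathcal E^{\mathcal S}_k-\mathcal E^{\mathcal S}_{k+2}\bigr),
\qquad\text{i.e.}\qquad
\mathcal E^{\mathcal S}_{k+2}\le\frac{C}{C+\delta_*}\,\mathcal E^{\mathcal S}_k ,
\]
which is the paper's two-step contraction. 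No change of $\eta$ and no use of the slack in $\sigma$ is needed. Combined with your strengthened one-step descent, the same index-shifted bound would even give a one-step contraction.
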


\begin{proof}
By Theorem~\ref{thm:local_convergence} and Lemma~\ref{lem:local_step_size_bdd_below}, there exists
\(\bar \lambda>0\) such that
\(\lambda_k\geq\bar \lambda\) for all \(k\geq1\).
By Lemma~\ref{lem:residual_equivalence}, we may decrease \(t\)
and adjust the error-bound constants so that
\(0<t\leq\bar \lambda\).
Therefore, Lemma~\ref{lem:residual_equivalence} and nonexpansiveness of the proximal mapping give
\begin{align}
\norm{r_t(x^k)}
&\leq\norm{r_{\lambda_k}(x^k)}
\leq\norm{x^{k+1}-x^k}+\norm{x^k-\bar x^k}\leq\norm{x^{k+1}-\bar x^k}
+\frac2c\norm{x^k-\bar x^{k-1}}.
\label{eq:local_residual_estimate}
\end{align}
Let $x^*$ be the common limit supplied by Theorem~\ref{thm:local_convergence}. Since \(r_t\) is continuous and \(r_t(x^*)=0\), we have
\(\lim_{k\to\infty}r_t(x^k)=0\). Thus, the error bound applies for every \(k\geq k_0\), for some $k_0\in\mathbb N$. 
Since~\eqref{eq:local_Lyapunov-energy_monotone} holds for every
\(z\in\mathcal S\), it follows that
\begin{equation}
\mathcal E_{k+1}^{\mathcal S}
+\delta_*\norm{x^k-\bar x^{k-1}}^2
\leq\mathcal E_k^{\mathcal S},
\qquad k\geq2,
\label{eq:local_set_descent}
\end{equation}
where
\begin{equation}
\mathcal E_k^{\mathcal S}
:=\inf_{z\in\mathcal S}\mathcal E_k(z)
=\frac{c}{c-1}\operatorname{dist}(\bar x^k,\mathcal S)^2
+\eta\norm{x^k-\bar x^{k-1}}^2
+\sigma\norm{x^k-x^{k-1}}^2.
\label{eq:local_energy_1}
\end{equation}
Thus, the sequence $\{\mathcal E_k^{\mathcal S}\}_{k\ge2}$ is nonincreasing. To relate this decrease to the energy itself, we use the identities
\begin{align*}
\bar x^{k+1}-x^k
&=\frac{c-1}{c}(x^{k+1}-\bar x^k)
-\frac1c(x^k-\bar x^{k-1}),\\
x^{k+1}-x^k
&=(x^{k+1}-\bar x^k)
-\frac1c(x^k-\bar x^{k-1}),
\end{align*}
which follow from \eqref{eq:local_convex_combination}.
The first identity and the error bound give, for \(k\geq k_0\)
\begin{align*}
\operatorname{dist}(\bar x^{k+1},\mathcal S)
&\leq\norm{\bar x^{k+1}-x^k}
+\kappa\norm{r_t(x^k)}\leq\left(\kappa+\frac{c-1}{c}\right)
\norm{x^{k+1}-\bar x^k}
+\frac{2\kappa+1}{c}\norm{x^k-\bar x^{k-1}}.
\end{align*}
Substituting these estimates into
\eqref{eq:local_energy_1} and applying
Cauchy--Schwarz, we obtain 
\begin{equation}
\mathcal E_{k+1}^{\mathcal S}
\leq C\left(
\norm{x^{k+1}-\bar x^k}^2
+\norm{x^k-\bar x^{k-1}}^2\right),
\qquad k\geq k_0,
\label{eq:local_energy_2}
\end{equation}
where
\[
C=\frac{c}{c-1}\left[
\left(\kappa+\frac{c-1}{c}\right)^2
+\left(\frac{2\kappa+1}{c}\right)^2\right]
+\eta+\sigma\left(1+\frac1{c^2}\right).
\]
Adding \eqref{eq:local_set_descent} at \(k\) and \(k+1\)
bounds the two squared norms in
\eqref{eq:local_energy_2}. Hence
\[
\mathcal E_{k+2}^{\mathcal S}
\leq\mathcal E_{k+1}^{\mathcal S}
\leq\frac{C}{\delta_*}
\left(\mathcal E_k^{\mathcal S}
-\mathcal E_{k+2}^{\mathcal S}\right),
\]
and rearranging gives, for $k\ge k_0$
\begin{equation*}
\mathcal E_{k+2}^{\mathcal S}
\leq\omega\mathcal E_k^{\mathcal S},
\quad
\omega:=\frac{C}{C+\delta_*}\in(0,1).
\end{equation*}
Let \(\gamma=\omega^{1/4}\). Then iterating the above inequality along the even and odd indices,
and using monotonicity of the energy, yields
\[
\sqrt{\mathcal E_{k+1}^{\mathcal S}}
\leq\sqrt{\mathcal E_{k_0}^{\mathcal S}}\,
\gamma^{k-k_0}.
\]
By the definition of the energy and the identity
\(\bar x^{k+1}-\bar x^k
=(c-1)(x^{k+1}-\bar x^k)/c\), there is a constant \(M>0\)
such that
\[
\norm{x^{k+1}-x^k}
+\norm{\bar x^{k+1}-\bar x^k}
\leq M\gamma^{k-k_0},
\qquad k\geq k_0.
\]
Summing from \(k\) to infinity and using the convergence of
both sequences to \(x^*\), gives
\begin{equation*}
\norm{x^k-x^*}+\norm{\bar x^k-x^*}
\leq\frac{M}{1-\gamma}\gamma^{k-k_0},
\qquad k\geq k_0.
\end{equation*}
This completes the proof.
\end{proof}

\begin{remark}
In \cite[Section~2.3]{malitsky2020golden}, the residual is evaluated
at \(\bar x^k\). Hence, the resulting estimate contains the
factor \(1+\lambda_kL\), which is bounded using the upper
step size cap. In \eqref{eq:local_residual_estimate}, evaluating
the residual at \(x^k\) avoids this factor. The positive lower
bound on \(\lambda_k\) and the positive constant
\(\delta_*\) are sufficient for the argument above. Thus, the
R-linear rate requires no explicit cap or further change to the step size rule of Algorithm~\ref{Alg:general}.
\end{remark}
\section{An application to structured variational inequalities}
\label{sec:structured_vi_applications}
We now apply the proposed adaptive method (Algorithm~\ref{Alg:general}) to problems involving a sum of
several convex functions. These types of problems are relevant when multiple constraints or several nonsmooth penalties are present in the formulation of \eqref{eq:monotone_vi}.  Consider
\begin{equation}
\text{find }x^*\in \mathbb H\text{ such that}\quad
\ip{F(x^*)}{x-x^*}
+\sum_{i=1}^m\bigl(g_i(x)-g_i(x^*)\bigr)\geq0
\quad\forall x\in \mathbb H,
\label{eq:vi_application}
\end{equation}
where $m\ge2$, each \(g_i:\mathbb H\to(-\infty,+\infty]\) is proper,
lsc, and convex, and \(F:\overline{\dom g_1}\to\mathbb H\) is
monotone and locally Lipschitz. Assume that \eqref{eq:vi_application} has a solution and that the sum rule holds: that is, for any $x\in\bigcap_{i=1}^m\dom g_i$
\begin{equation}
\partial\left(\sum_{i=1}^m g_i\right)(x)
=\sum_{i=1}^m\partial g_i(x).
\label{eq:sum_rule}
\end{equation}
Since \(\mathbb H\) is finite-dimensional, a sufficient condition
is \(\bigcap_{i=1}^m\operatorname{ri}(\dom g_i)\neq\varnothing\),
where ``\(\operatorname{ri}\)'' denotes the \emph{relative interior} of a set; see \cite[Definition 6.9]{BauschkeCombettes2017}. To solve \eqref{eq:vi_application}, one could apply~Algorithm~\ref{Alg:general} replacing $g$ with \(g:=\sum_i g_i\). However, it would then require computing the proximal mapping of the sum, which can be difficult even when the
proximal mappings of the individual terms are available. Therefore, we derive a formulation that keeps these proximal computations separate. By the optimality condition for~\eqref{eq:vi_application} and the sum rule~\eqref{eq:sum_rule}, we obtain
\begin{equation*}
0\in F(x^*)+\sum_{i=1}^m\partial g_i(x^*).
\end{equation*}
Thus, there exist \(y_i^*\in\partial g_i(x^*)\),
\(i=2,\ldots,m\), such that
\(-F(x^*)-\sum_{i=2}^m y_i^*\in\partial g_1(x^*)\). Furthermore, observe that \(y_i^*\in\partial g_i(x^*)\) if and only if
\(x^*\in\partial g_i^*(y_i^*)\). Now, we have
\begin{equation}
\begin{cases}
0\in F(x^*)+\partial g_1(x^*)
+\displaystyle\sum_{i=2}^m y_i^*,\\[1mm]
0\in\partial g_i^*(y_i^*)-x^*,
\qquad i=2,\ldots,m,
\end{cases}
\label{eq:optimality}
\end{equation}
where \(g_i^*\) is the Fenchel conjugate \cite[Definition 13.1]{BauschkeCombettes2017} of \(g_i\). Conversely, the same
subdifferential identity shows that any solution of~\eqref{eq:optimality} gives a solution of~\eqref{eq:vi_application}. To write these conditions as a single inclusion, equip
\(\mathbb K=\mathbb H^m\) with the usual
product inner product. Then for \(z=(x,y_2,\ldots,y_m)\), define
\begin{align*}
G(z)&:=g_1(x)+\sum_{i=2}^m g_i^*(y_i),\\
B(z)&:=\left(F(x)+\sum_{i=2}^m y_i,-x,\ldots,-x\right).
\end{align*}
By separability of \(G\), the primal-dual system
\eqref{eq:optimality} is equivalent to
\begin{equation}
0\in B(z^*)+\partial G(z^*).
\label{eq:equivalent_form}
\end{equation}
Further, note that for \(y=(y_2,\ldots,y_m)\) and
\(Kx=(x,\ldots,x)\in\mathbb H^{m-1}\), we have
\(K^*y=\sum_{i=2}^m y_i\) and
\begin{equation*}
B(z)=(F(x),0)+Mz,
\qquad
M:=\begin{pmatrix}0&K^*\\-K&0\end{pmatrix},
\qquad M^*=-M.
\end{equation*}
This is the standard monotone-plus-skew formulation of
\cite[Section~2.3]{BricenoAriasCombettes2011}, where further extension with several terms and a monotone Lipschitz operator is developed in
\cite[Section~3]{combettes2012primal}. Here we use this formulation
to apply Algorithm~\ref{Alg:general} under local Lipschitz continuity of $F$. Since the proximal map of \(G\) is separable, applying Algorithm~\ref{Alg:general} to \eqref{eq:equivalent_form} yields the following implementation.
\begin{algorithm}[H]
\caption{Adaptive reformulated golden ratio algorithm (AR-GRAAL) for \eqref{eq:vi_application}.}\label{Alg:application}
Let \(1<c<3\), and choose \(\rho,\sigma\) according to
\eqref{eq:select_rho_from_the_range}--\eqref{eq:select_sigma_from_the_bound}, respectively.
Let \(z^0,z^1\in\dom G\), \(\lambda_0>0\), and set
\(\bar z^0=z^1\), where
\(z^k=(x^k,y_2^k,\ldots,y_m^k)\) and
\(\bar z^k=(\bar x^k,\bar y_2^k,\ldots,\bar y_m^k)\).
For \(k\geq1\), compute
\begin{align*}
\lambda_k&=\min\left\{\rho\lambda_{k-1},
\frac{\sigma\norm{z^k-z^{k-1}}}{\norm{B(z^k)-B(z^{k-1})}}\right\},\\
\bar x^k&=\frac{c-1}{c}x^k+\frac1c\bar x^{k-1},\notag\\
\bar y_i^k&=\frac{c-1}{c}y_i^k+\frac1c\bar y_i^{k-1},\\
x^{k+1}&=\prox_{\lambda_k g_1}\left(
\bar x^k-\lambda_k\left(F(x^k)+\sum_{i=2}^m y_i^k\right)\right),\\
y_i^{k+1}&=\prox_{\lambda_k g_i^*}
\bigl(\bar y_i^k+\lambda_k x^k\bigr),\qquad i=2,\ldots,m.
\end{align*}
\end{algorithm}
In practice, knowing $\prox_{g_i}$ is enough, as we can always apply Moreau's identity~\cite[Theorem 14.3 (ii)]{BauschkeCombettes2017} to compute the proximal mapping of the conjugates. Further, note that 
\begin{align*}
\norm{z^k-z^{k-1}}^2&:=\norm{x^k-x^{k-1}}^2+\sum_{i=2}^m\norm{y_i^k-y_i^{k-1}}^2,\notag\\
\norm{B(z^k)-B(z^{k-1})}^2&:=\left\|F(x^k)-F(x^{k-1})+\sum_{i=2}^m (y_i^k-y_i^{k-1})\right\|^2
+(m-1)\norm{x^k-x^{k-1}}^2.
\end{align*}

\begin{corollary}
Suppose that \(F:\overline{\dom g_1}\to\mathbb{H}\) is monotone and locally Lipschitz, and for $i=1,2,\ldots,m$ that \(g_i:\mathbb H\to(-\infty,+\infty]\) is proper
lsc and convex. Assume that \eqref{eq:vi_application} has a solution and that the sum rule~\eqref{eq:sum_rule} holds. Then the sequences \((z^k)\) and \((\bar z^k)\) generated by Algorithm~\ref{Alg:application} converge to the same
solution of \eqref{eq:equivalent_form}.
In particular, \((x^k)\) converges to a solution of~\eqref{eq:vi_application}.
\end{corollary}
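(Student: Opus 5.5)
The plan is to check that the product-space problem \eqref{eq:equivalent_form} satisfies every hypothesis of Theorem~\ref{thm:local_convergence}, with \(\mathbb H\) replaced by \(\mathbb K=\mathbb H^m\), \(g\) by \(G\), and \(F\) by \(B\). Algorithm~\ref{Alg:application} is then literally Algorithm~\ref{Alg:general} applied to \((B,G)\), so the theorem gives convergence of \((z^k)\) and \((\bar z^k)\). The componentwise form of the proximal step follows from separability of \(G\): \(\prox_{\lambda G}(x,y_2,\dots,y_m)=(\prox_{\lambda g_1}x,\prox_{\lambda g_2^*}y_2,\dots)\). Expanding \(\bar z^k-\lambda_kB(z^k)\) componentwise gives exactly the updates for \(x^{k+1}\) and \(y_i^{k+1}\).

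The hypotheses are checked in order.

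\textbf{(C.1).} Each \(g_i^*\) is proper, lsc and convex, because \(g_i\) is proper, lsc and convex. Hence \(G\) is proper, lsc and convex on \(\mathbb K\), with \(\dom G=\dom g_1\times\prod_{i\ge2}\dom g_i^*\) and \(\overline{\dom G}=\overline{\dom g_1}\times\prod_{i\ge2}\overline{\dom g_i^*}\).

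\textbf{Definition and monotonicity of \(B\).} The operator \(B\) is defined on \(\overline{\dom G}\), since its first component only requires \(x\in\overline{\dom g_1}\). Monotonicity follows from the splitting \(B(z)=(F(x),0)+Mz\) with \(M^*=-M\):
\[
\ip{B(z)-B(z')}{z-z'}=\ip{F(x)-F(x')}{x-x'}+\ip{M(z-z')}{z-z'}=\ip{F(x)-F(x')}{x-x'}\ge0 .
\]

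\textbf{Local Lipschitz continuity of \(B\).} Let \(Q\subset\overline{\dom G}\) be compact. Its projection onto the \(x\)-component is a compact subset of \(\overline{\dom g_1}\), on which \(F\) is \(L\)-Lipschitz for some \(L\). Then \(B\) is \((L+\norm{M})\)-Lipschitz on \(Q\).

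\textbf{(C.2).} Take a solution \(x^*\) of \eqref{eq:vi_application}. The sum rule \eqref{eq:sum_rule} provides \(y_i^*\in\partial g_i(x^*)\) with \(-F(x^*)-\sum_{i\ge2}y_i^*\in\partial g_1(x^*)\). By the Fenchel--Young equivalence \(y_i^*\in\partial g_i(x^*)\iff x^*\in\partial g_i^*(y_i^*)\), the point \(z^*=(x^*,y_2^*,\dots,y_m^*)\) satisfies \eqref{eq:optimality}. Since \(\partial G(z)=\partial g_1(x)\times\prod_{i\ge2}\partial g_i^*(y_i)\) by separability, \(z^*\) solves \eqref{eq:equivalent_form}. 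The associated mixed variational inequality therefore has a nonempty solution set.

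The step size rule of Algorithm~\ref{Alg:application} is \eqref{eq:local_step_size} with the norms of \(\mathbb K\), which is exactly the expansion of \(\norm{B(z^k)-B(z^{k-1})}\) written after the algorithm. The parameters are chosen by the same rules \eqref{eq:select_rho_from_the_range}--\eqref{eq:select_sigma_from_the_bound}. Theorem~\ref{thm:local_convergence} therefore yields \(z^k,\bar z^k\to\hat z=(\hat x,\hat y_2,\dots,\hat y_m)\), where \(\hat z\) solves \eqref{eq:equivalent_form}.

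\textbf{Recovering a solution of \eqref{eq:vi_application}.} This converse step is where some care is needed; everything else is routine bookkeeping. The inclusion \(0\in B(\hat z)+\partial G(\hat z)\) gives \(\hat x\in\partial g_i^*(\hat y_i)\), hence \(\hat y_i\in\partial g_i(\hat x)\) and \(\hat x\in\dom g_i\) for every \(i\ge2\). It also gives \(-F(\hat x)-\sum_{i\ge2}\hat y_i\in\partial g_1(\hat x)\). Therefore
\[
0\in F(\hat x)+\sum_{i=1}^m\partial g_i(\hat x)\subseteq F(\hat x)+\partial\Bigl(\sum_{i=1}^m g_i\Bigr)(\hat x).
\]
Only the trivial inclusion \(\sum_i\partial g_i\subseteq\partial\bigl(\sum_i g_i\bigr)\) is used here, not the full sum rule. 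By the subgradient inequality, this inclusion means \(\hat x\) solves \eqref{eq:vi_application}. Since \(x^k\) is the first component of \(z^k\), we get \(x^k\to\hat x\), which gives the final claim.
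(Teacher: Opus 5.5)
Your proposal is correct and follows the paper's proof. Like the paper, you check that $(B,G)$ on $\mathbb K=\mathbb H^m$ satisfies the hypotheses of Theorem~\ref{thm:local_convergence}: $G$ is proper, lsc and convex; $B$ is monotone because the coupling is skew; $B$ is locally Lipschitz with constant $L+\norm{M}$, which matches the paper's $L+\sqrt{m-1}$; and the lifted solution set is nonempty. You then read off the $x$-component, and your remark that this converse step uses only the trivial inclusion $\sum_i\partial g_i\subseteq\partial\bigl(\sum_i g_i\bigr)$ makes explicit a point the paper states more briefly.
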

\begin{proof}
The function \(G\) is proper, lsc, and convex, \(B\) is defined on
\(\overline{\dom G}\), and the lifted solution set is nonempty by
\eqref{eq:equivalent_form}.
For \(z,z'\in\overline{\dom G}\), cancellation of the coupling terms gives
\[
\ip{B(z)-B(z')}{z-z'}=\ip{F(x)-F(x')}{x-x'}\geq0.
\]
Thus \(B\) is monotone. Now if $F$ is $L$-Lipschitz on a compact set, then the triangle inequality and Cauchy--Schwarz inequality give
\begin{align*}
\norm{B(z)-B(z')}
&\leq L\norm{x-x'}+
\left(\left\|\sum_{i=2}^m(y_i-y'_i)\right\|^2
+(m-1)\norm{x-x'}^2\right)^{1/2}\\
&\leq\bigl(L+\sqrt{m-1}\bigr)\norm{z-z'}.
\end{align*}
Hence \(B\) is locally Lipschitz on \(\overline{\dom G}\),
and all the assumptions of Theorem~\ref{thm:local_convergence}
hold. Therefore, Theorem \ref{thm:local_convergence} gives \(z^k,\bar z^k\to z^*:=(x^*,y_2^*,\ldots,y_m^*)\). In particular,
\(x^k\to x^*\), and~\eqref{eq:equivalent_form}
shows that \(x^*\) is a solution of \eqref{eq:vi_application}.
\end{proof}

\section{Numerical illustrations}\label{sec:numerical_results}

In this section, we compare two instances of Algorithm~\ref{Alg:general}, namely
AE-GRAAL-A and AE-GRAAL-B, against aGRAAL \cite{malitsky2020golden} on three benchmark problems: the
Nash--Cournot problem, sparse logistic regression, and the cubic
complementarity problem. Both instances use
\[
\begin{array}{c|cccc}
 &c&\eta&\rho&\sigma\\ \hline
\text{AE-GRAAL-A}&2&1/4&5/4&11/40\\
\text{AE-GRAAL-B}&2&1/3&7/6&77/250
\end{array}
\]
The parameter selection of AE-GRAAL-A is verified in~Subsection~\ref{subsec:adaptive_algorithm_local_case-parameters}. Taking $c=2$ and $\eta=1/3$, and setting
 $\rho=\beta=3/2-\eta=7/6$, makes the two terms in the numerator
of \eqref{eq:select_sigma_from_the_bound} equal, which gives
 $\sigma_{\max}=14/45$. Now choosing
$\sigma=(1-\varepsilon)\sigma_{\max}$ with $\varepsilon=10^{-2}$, gives the selected parameters for AE-GRAAL-B.
These parameter choices illustrate two different parameter tradeoffs: AE-GRAAL-A lets its step sizes grow faster
through a larger $\rho$, while AE-GRAAL-B permits a relatively larger
coefficient $\sigma$ in the local step size calculation. It is worth mentioning that these choices are illustrative and do not automatically guarantee better performance over other parameter choices.
For aGRAAL, we follow the numerical implementation of
\cite{malitsky2020golden} and take $\phi=3/2$,
$\rho=1/\phi+1/\phi^2=10/9$, $\vartheta_0=1$, and
$\lambda_{\max}=10^6$.

To study parameter sensitivity in AE-GRAAL, we vary $c$ on the Nash--Cournot and cubic complementarity problems, using the grid
 $c\in\{1.1,1.2,\ldots,2.9\}$. At each grid point we choose
\[
\eta=
\begin{cases}
\dfrac{c-1}{4c}, & \text{Nash--Cournot problem},\\[1ex]
\dfrac{c-1}{2c}, & \text{Cubic complementarity problem},
\end{cases}
\]
and then select $\rho$ from \eqref{eq:select_rho_from_the_range} as the midpoint $\rho=\frac{1+r_+}{2}.$ For this value of $\rho$, we select $\sigma$ from \eqref{eq:select_sigma_from_the_bound} as $\sigma=(1-\varepsilon)\sigma_{\max},$ with $\varepsilon=10^{-2}$. The midpoint avoids choosing $\rho$ too close to its upper bound.
To see why this matters, write
$q(\theta)=\beta\eta+2\beta \theta/c-\theta^2$ and
$D=\eta+\beta/c^2$. For fixed $c$ and $\eta$, we have
$q(r_{+})=0$. Hence $\sigma_{\max}$ can be extremely small if we choose $\rho$ closer to $r_{+}$.
As a result, the choice of $\sigma$ can restrict the step sizes to the second term in \eqref{eq:local_step_size}. 

For each scenario in the Nash--Cournot problem, we use $10$ realisations, and $5$ for the cubic complementarity problem. For both problems, we also compare the step sizes $\lambda_k$ with $1/L_k$, where
\[
L_k=\frac{\|F(x^{k})-F(x^{k-1})\|}
         {\|x^{k}-x^{k-1}\|}
\]
is an estimate of the local Lipschitz constant of $F$. This comparison uses the first
realisation (seed $0$) and shows at most $1000$ updates. The residual plots demonstrate individual runs, and for the sensitivity study of the parameters $c\in\{1.1,1.2,\ldots,2.9\}\subset(1,3)$, we plot median iteration counts across realisations. All numerical experiments were performed using Python 3.14.6 on a 64-bit Windows 11 Pro computer equipped with an Intel Core Ultra 7 266V processor (8 cores) and 16 GB of RAM.

\subsection{Nash--Cournot equilibrium}
We consider a Nash--Cournot model in which $n$ firms compete by
supplying a homogeneous product. Firm $i$ chooses $q_i\geq0$ to
maximise its profit $q_i p(Q)-f_i(q_i)$, where
$Q=\sum_{i=1}^n q_i$ is the total output. Following
\cite{malitsky2020golden}, we use the inverse demand and cost functions
\[
p(Q)=5000^{1/\gamma}Q^{-1/\gamma}
\quad\text{and}\quad
f_i(q_i)=a_iq_i+\frac{\beta_i}{\beta_i+1}
\ell_i^{1/\beta_i}q_i^{1+1/\beta_i},
\]
with $\gamma>1$ and $a_i, \ell_i, \beta_i>0$. We modify the feasible set by setting  $C_{\varepsilon_1}:=[\varepsilon_1,\infty)^n$ with $\varepsilon_1>0$. Therefore, the experiments concern a game with the lower output constraint $q_i\geq\varepsilon_1$, which can 

\begin{figure}[htbp]
  \centering
  \includegraphics[width=0.31\linewidth]{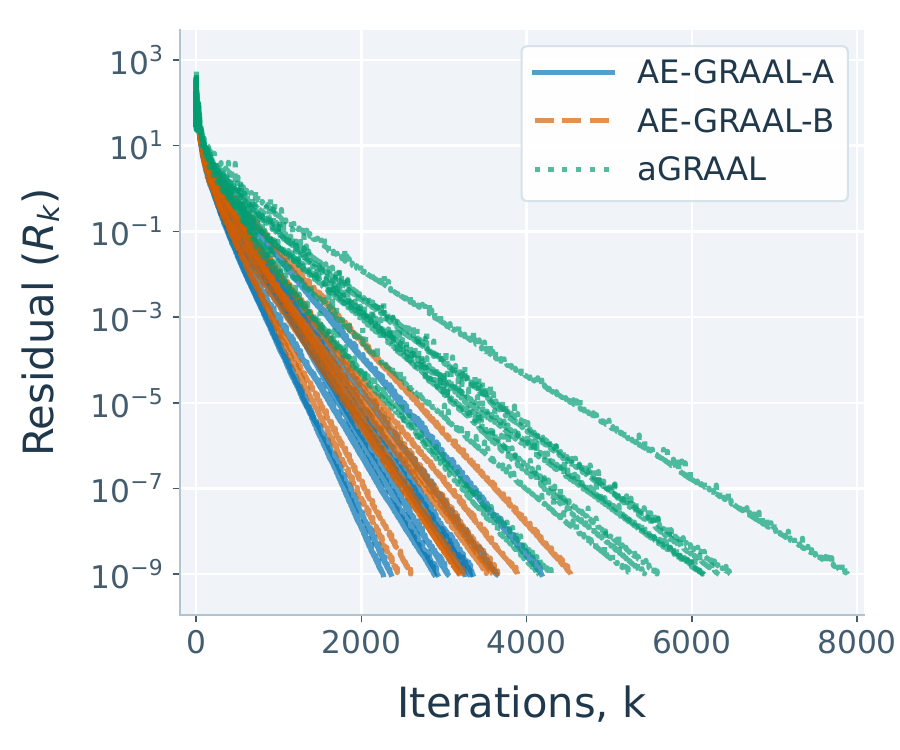}%
  \includegraphics[width=0.31\linewidth]{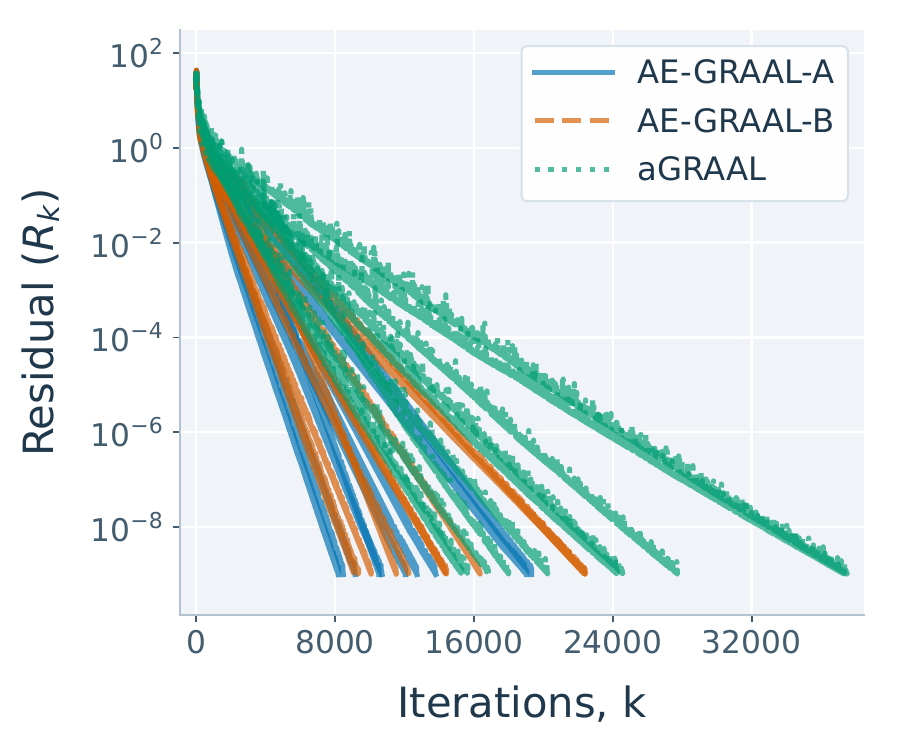}
    \includegraphics[width=0.31\linewidth]{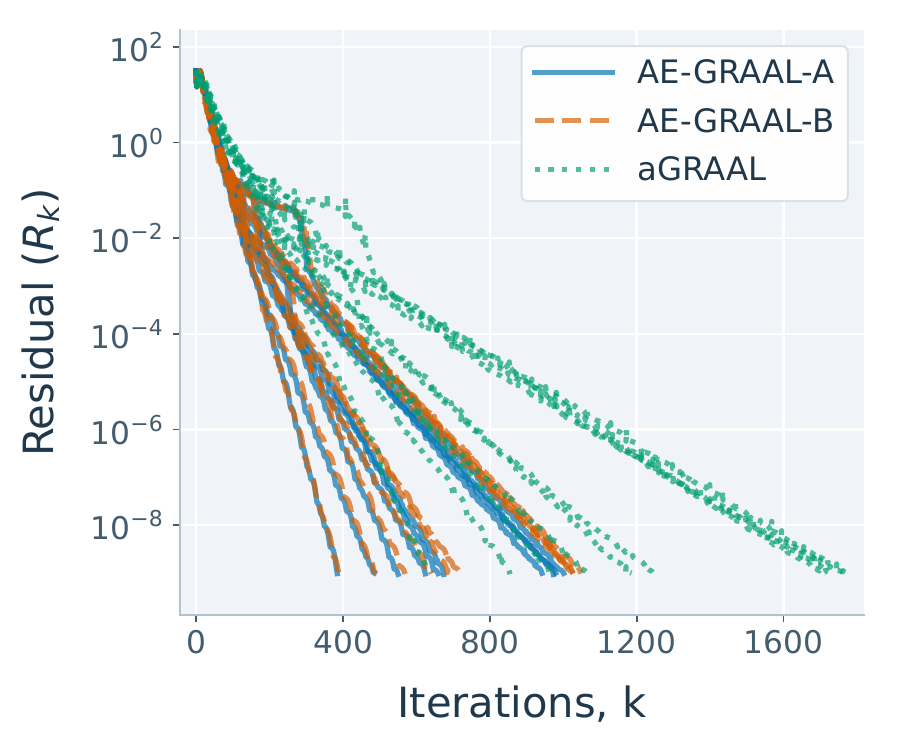}
  \caption{Residual ($R_k$) over iterations for the Nash--Cournot equilibrium problem:
    Scenario~1 (left), Scenario~2 (middle), and Scenario~3 (right).}
  \label{fig:nash_experiment}
\end{figure}
\noindent
have different equilibria from the game with $q_i\geq0$. This restriction addresses two boundary issues in the original benchmark. First, when $\beta_i > 1$, the marginal-cost term $(\ell_iq_i)^{1/\beta_i}$ in $F_i$ is not locally Lipschitz at $q_i=0$. Second, the inverse demand curve is undefined at $Q = 0$. At an equilibrium with positive total output, the first-order
conditions give the VI
\[
\text{find }q^*\in C_{\varepsilon_1}
\quad\text{such that}\quad
\ip{F(q^*)}{q-q^*}\geq0
\quad\forall q\in C_{\varepsilon_1},
\]
where $F_i(q)=a_i+(\ell_iq_i)^{1/\beta_i}-p(Q)-q_ip'(Q).$ Thus, in comparison with \eqref{eq:monotone_vi}, $g=\delta_{C_{\varepsilon_1}}$, and its proximal map is $[\prox_{\lambda g}(v)]_i =\max\{v_i,\varepsilon_1\}.$ For $\gamma>1$, the operator $F$ is monotone on
$C_{\varepsilon_1}$. Furthermore, every feasible point satisfies
$q_i\geq\varepsilon_1$ and $Q\geq n\varepsilon_1$. Moreover, $F$ is
continuously differentiable, and is locally Lipschitz on $\overline{\dom g}=C_{\varepsilon_1}$.
\begin{figure}[htbp]
  \centering
  \includegraphics[width=0.32\linewidth]{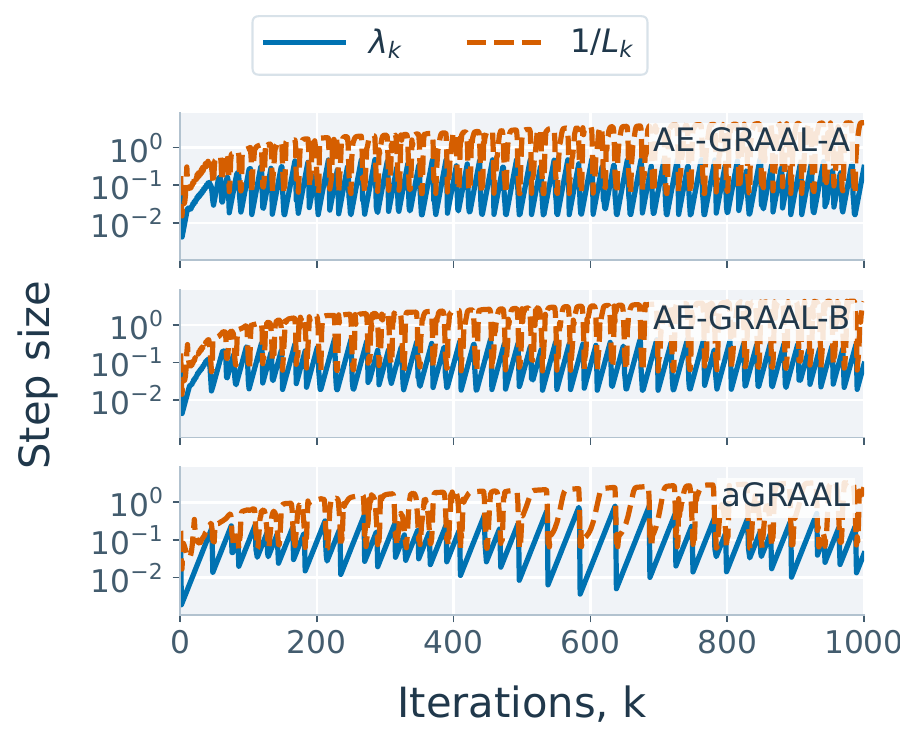}%
  \hspace{1.1cm}
  \includegraphics[width=0.32\linewidth]{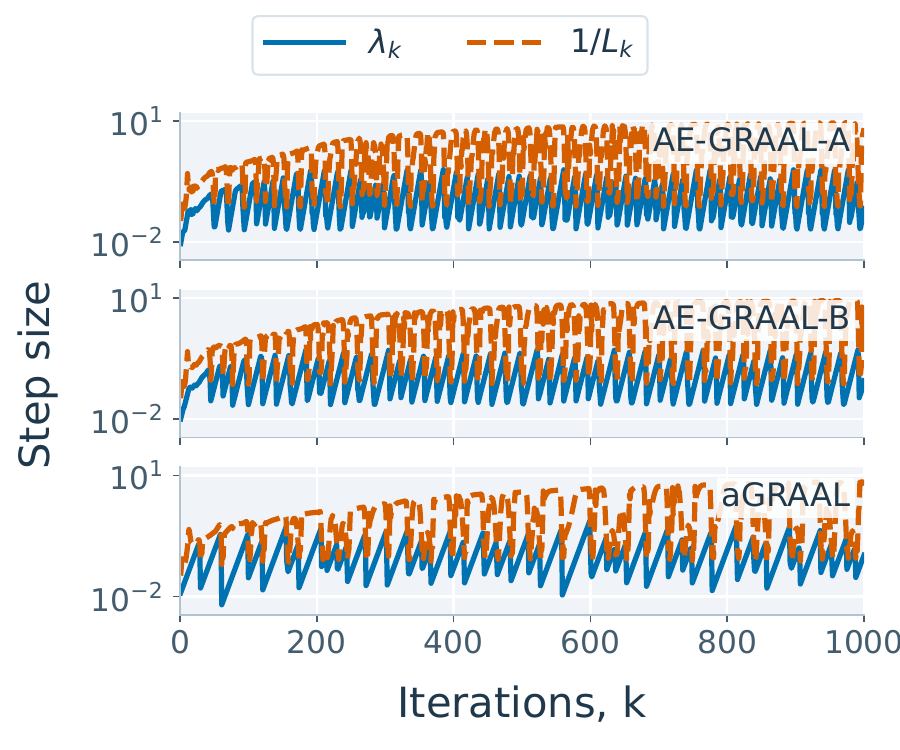}\\
    \includegraphics[width=0.32\linewidth]{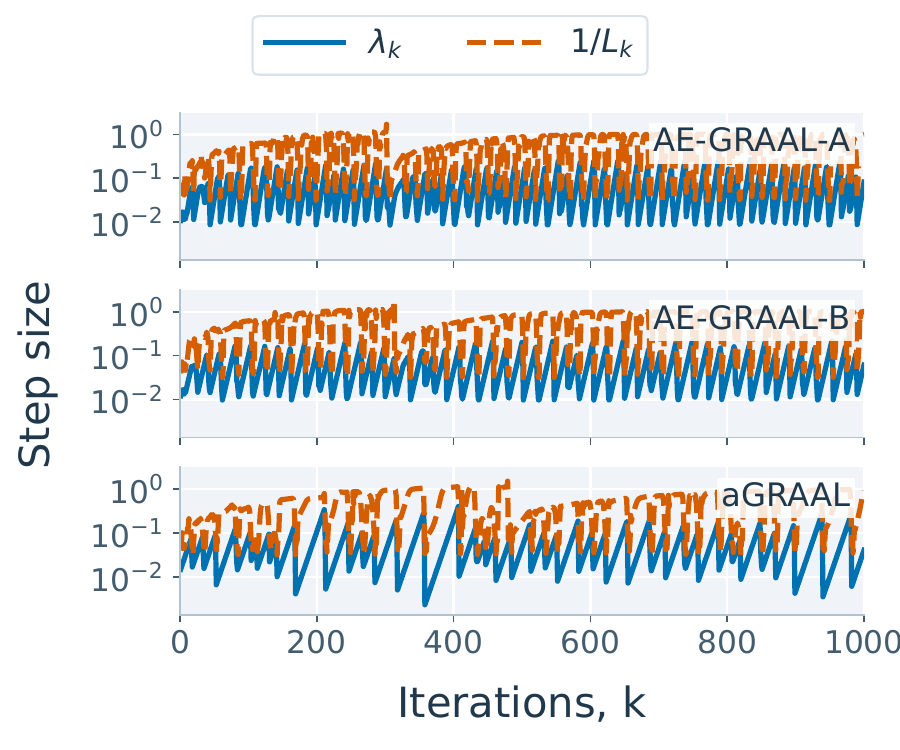}
    \hspace{1.1cm}
     \includegraphics[width=0.32\linewidth]{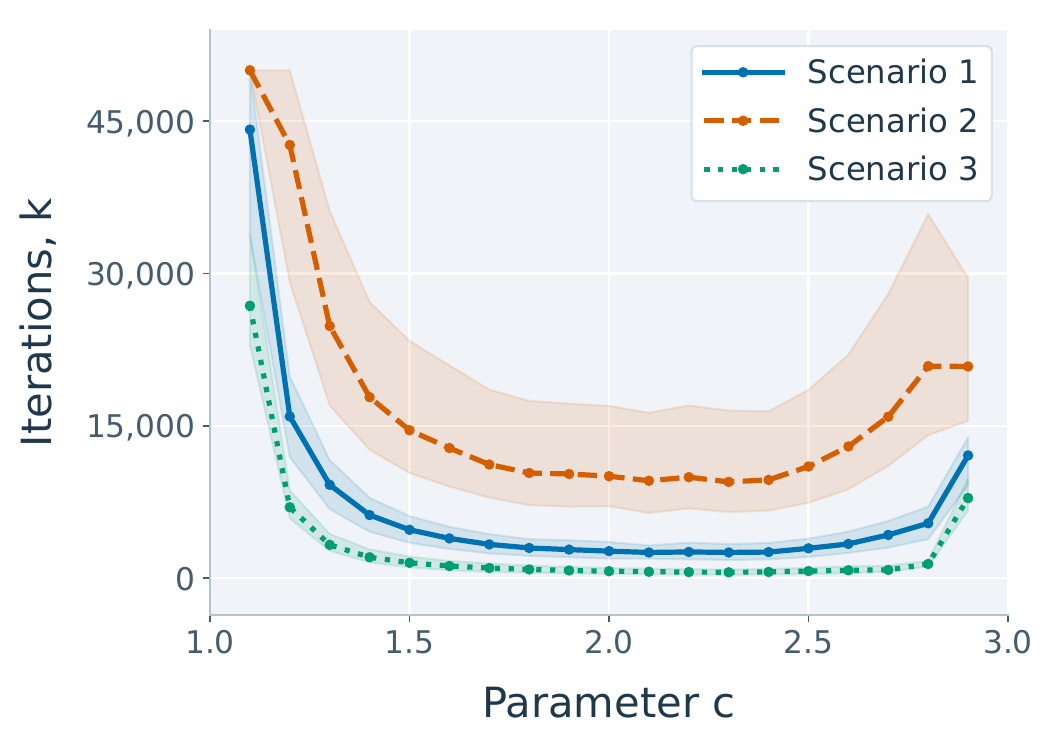}
\caption{Results for the Nash--Cournot equilibrium problem:
comparisons of $\lambda_k$ and $1/L_k$ for the first realisation
(seed $0$) of Scenario~1 (top left), Scenario~2 (top right),
and Scenario~3 (bottom left),
and median iteration counts for AE-GRAAL over
$c\in\{1.1,1.2,\ldots,2.9\}$ in all three scenarios (bottom right). In the bottom right case, each run stops when $R_k\leq10^{-9}$ or $50000$ iterations are reached.}
  \label{fig:nash_equilibrium_steps_all_run}
\end{figure}
\noindent
Throughout this experiment, we use $n=1000$, 
$\varepsilon_1=10^{-3}$, and coefficients are drawn independently from
$a_i\sim\mathcal U(1,100)$ and $\ell_i\sim\mathcal U(0.5,5)$. The first two scenarios follow the parameter choices of \cite{malitsky2020golden}:
\textbf{Scenario~1} combines $\gamma=1.1$ with
$\beta_i\sim\mathcal U(0.5,2)$, and
\textbf{Scenario~2} combines $\gamma=1.5$ with
$\beta_i\sim\mathcal U(0.3,4)$.
In \textbf{Scenario~3} we take $\gamma=1.5$ and
$\beta_i=1/2$ for every $i$. In this case, the marginal costs
are quadratic, since $(\ell_iq_i)^{1/\beta_i}=\ell_i^2q_i^2$.

\noindent
For each scenario, we generate $10$ realisations using seeds
$0,\ldots,9$ and start every algorithm from $x^1=(1,\ldots,1)$. The remaining initial parameters are
$x^0=P_{C_{\varepsilon_1}}(x^1+10^{-9}\xi)$, 
$\xi\sim\mathcal N(0,I_n)$, together with
$\lambda_0=
\|x^1-x^0\|/\|F(x^1)-F(x^0)\|.$
At each iteration, we record the natural residual
$R_k=\norm{x^k-
P_{C_{\varepsilon_1}}\bigl(x^k-F(x^k)\bigr)}.$
We stop each run when $R_k\leq10^{-9}$ or $50\,000$ iterations are reached. Figures~ \ref{fig:nash_experiment} and \ref{fig:nash_equilibrium_steps_all_run} collect the results for this experiment. From Figure~\ref{fig:nash_experiment}, we can observe that two instances of Algorithm~\ref{Alg:general} tend to require fewer iterations to reach the target accuracy than aGRAAL on these instances. Further, Figure~\ref{fig:nash_equilibrium_steps_all_run} suggests that this may be due to better approximation of the inverse local Lipschitz constant of $F$. These observations are specific to the tested scenarios and do not imply uniformly faster residual decay.

\subsection{Sparse logistic regression}
We next consider the sparse logistic regression problem
\[
\min_{x\in\mathbb R^n}J(x)
:=\sum_{i=1}^{m}\log\bigl(1+\exp(-b_i a_i^{\mathsf T}x)\bigr)
+\tau\|x\|_1,
\]
where $a_i\in\mathbb R^n$ are feature vectors and
$b_i\in\{-1,1\}$ are labels. Following \cite{malitsky2020golden},
we set $\tau=0.005\|A^{\mathsf T}b\|_\infty$, where $A$
has rows $a_i^{\mathsf T}$ and
$b=(b_1,\ldots,b_m)^{\mathsf T}$. This problem fits
\eqref{eq:monotone_vi} with $g(x)=\tau\|x\|_1$ and
\[
F(x)=-\sum_{i=1}^{m}
\frac{b_i a_i}{1+\exp(b_i a_i^{\mathsf T}x)}.
\]
Here, $F$ is monotone and globally Lipschitz with the constant
$L=\frac{\|A\|_2^2}{4}$, where $\|A\|_2$ is the spectral norm of $A$.
\begin{figure}[htbp]
    \centering
    \includegraphics[width=0.3\linewidth]{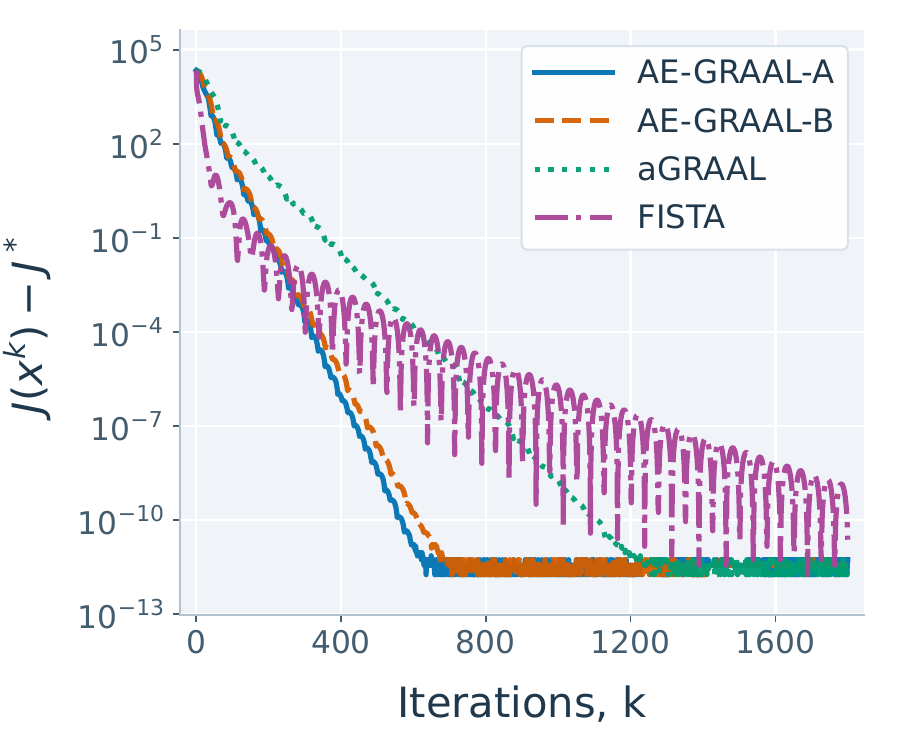}%
    \hfill
    \includegraphics[width=0.3\linewidth]{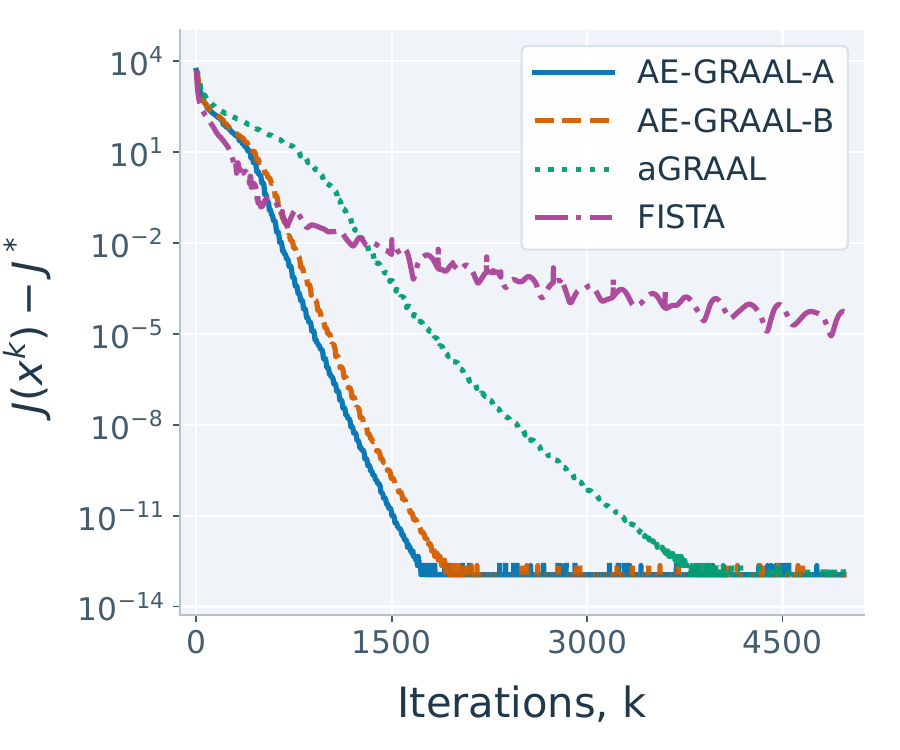}%
    \hfill
    \includegraphics[width=0.3\linewidth]{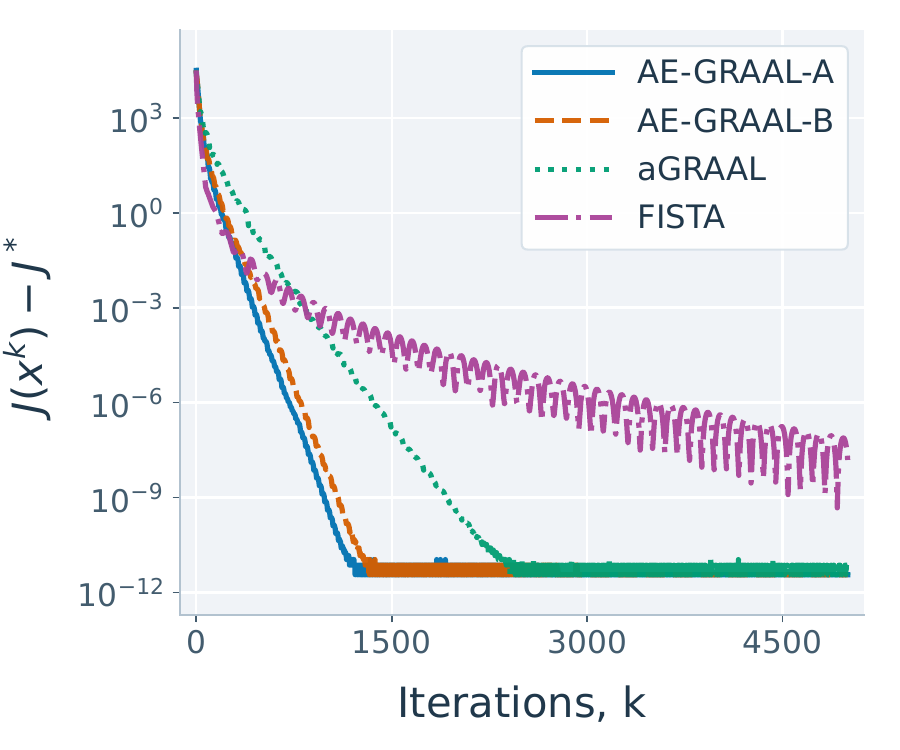}\\[1ex]
    \includegraphics[width=0.3\linewidth]{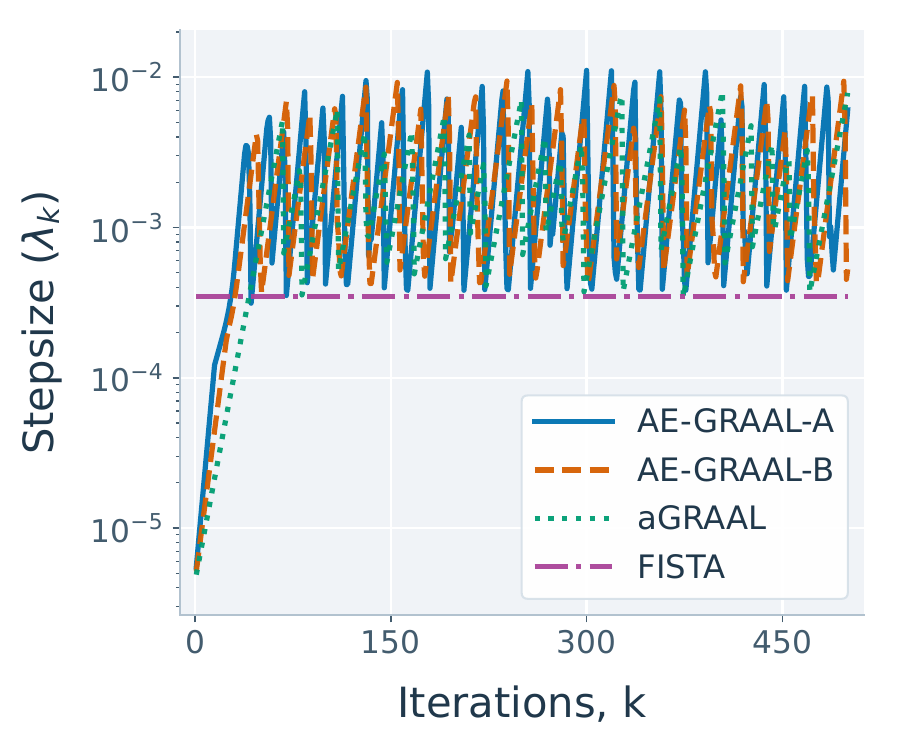}%
    \hfill
    \includegraphics[width=0.3\linewidth]{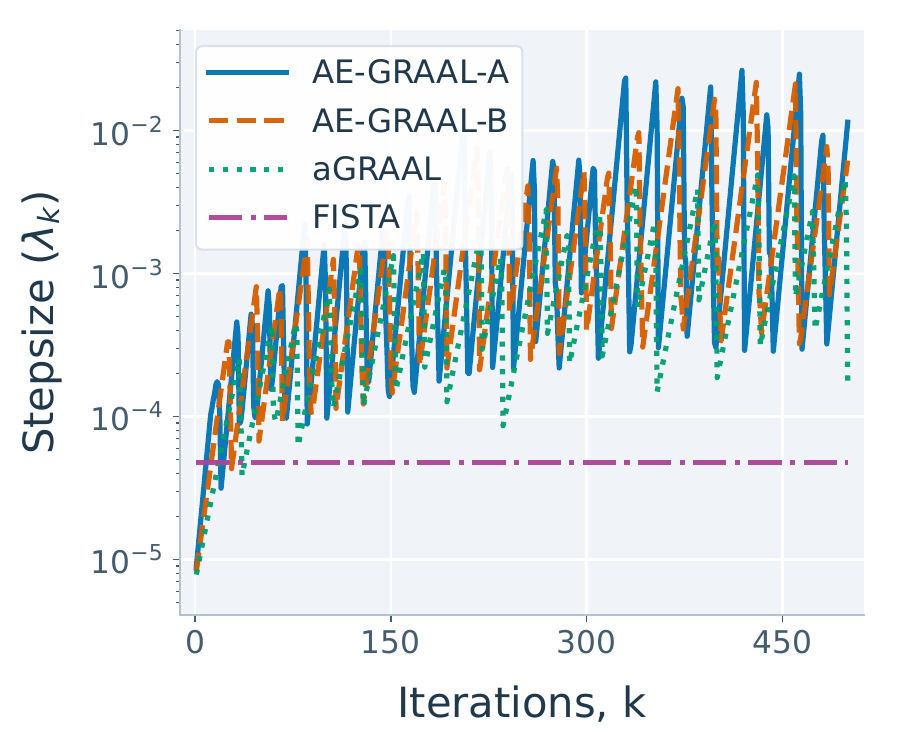}%
    \hfill
    \includegraphics[width=0.3\linewidth]{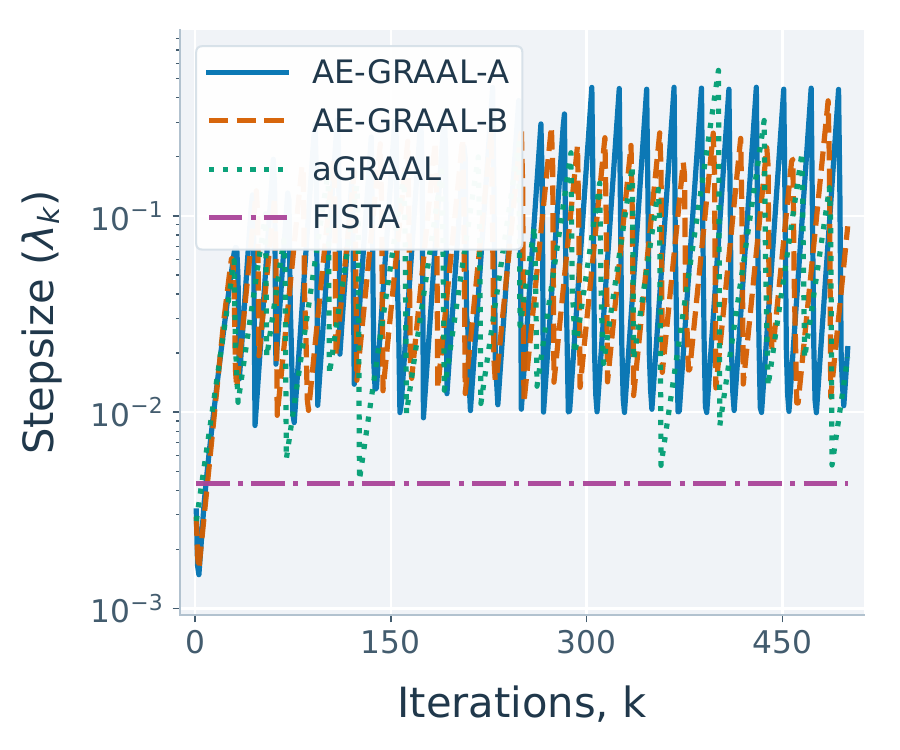}
    \caption{Results for sparse logistic regression
    on \texttt{ijcnn1}, \texttt{mushrooms}, and \texttt{real-sim} datasets,
    from left to right. Top row: objective gap $J(x^k)-J_{\mathrm{*}}$ versus iterations;
    Bottom row: the first $500$ step sizes.}
    \label{fig:logistic-datasets}
\end{figure}
\noindent
The proximal map is given componentwise by $[\operatorname{prox}_{\lambda g}(x)]_j
=\operatorname{sign}(x_j)\max\{|x_j|-\lambda\tau,0\}.$ We use the standard LIBSVM datasets \texttt{ijcnn1}, \texttt{mushrooms}, and
\texttt{real-sim}, with dimensions
$(m,n)=(49\,990,22)$, $(8\,124,112)$, and
$(72\,309,20\,958)$, respectively. We compare AE-GRAAL-A, AE-GRAAL-B, aGRAAL, and FISTA, all starting from $x^1=\mathbf0$. For the adaptive methods, we also use $x^0\sim\mathcal N(0,I_n)$ and
$\lambda_0=0.005\|x^1-x^0\|/\|F(x^1)-F(x^0)\|$. For FISTA, we take the fixed step $4/\|A\|_2^2$, where $\|A\|_2$ is the
spectral norm of $A$. Following~\cite{malitsky2020golden}, we plot $J(x^k)-J_*$, where $J_*$
is the smallest objective value recorded across the methods,
accepted only if at least one recorded point satisfies $\|x-\operatorname{prox}_g(x-F(x))\|\leq10^{-9}.$ We also record the first $500$ applied step sizes. From Figure \ref{fig:logistic-datasets}, we can observe that in all three datasets, both instances of Algorithm \ref{Alg:general} give a smaller objective gap faster than aGRAAL for the same number of iterations. Although step sizes continue to oscillate within a fixed range across all adaptive algorithms, our proposed methods adapt more effectively to the problem's geometry.

\subsection{Cubic complementarity problem}

We consider the problem of finding $x^* \in \R_+^n$ such that
\begin{equation}\label{eq:cubic reguralisation}
   \ip{F(x^*)}{x-x^*\geq0}
   \quad\forall x\in \R_+^n,
\end{equation}
where $F(x)=x+x^{\odot3}+Sx-b,$ with $(Sx)_i=x_{i+1}-x_{i-1}$ (indices taken cyclically) and $n\geq3$. Here
$x^{\odot3}$ denotes componentwise cubing, and $S^\top=-S$. 
\begin{figure}[htbp]
    \centering
    \includegraphics[width=0.3\linewidth]{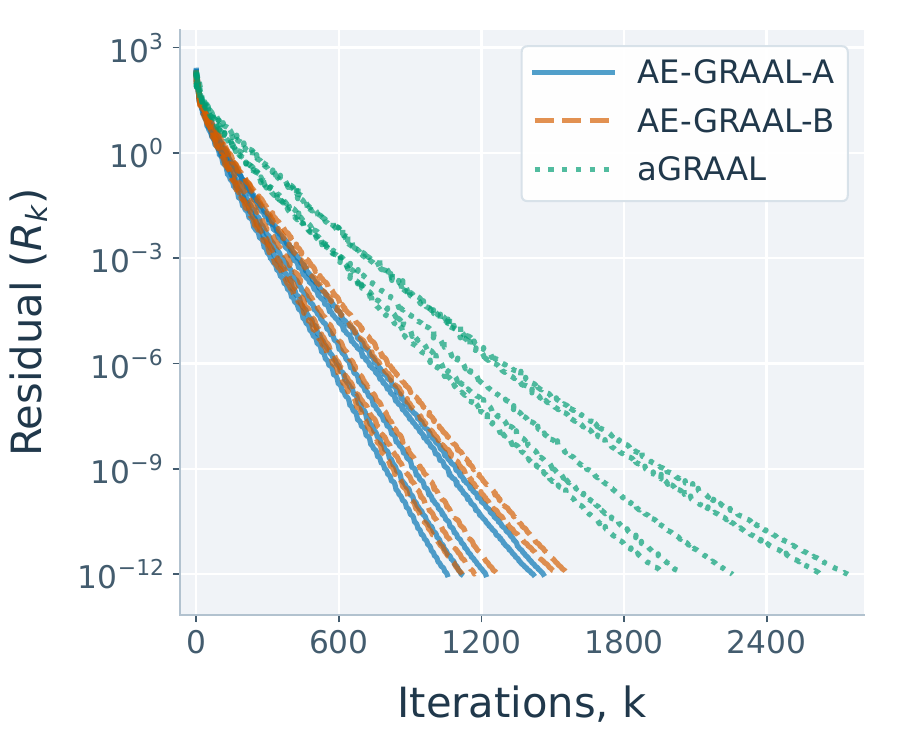}%
    \hfill
    \includegraphics[width=0.3\linewidth]{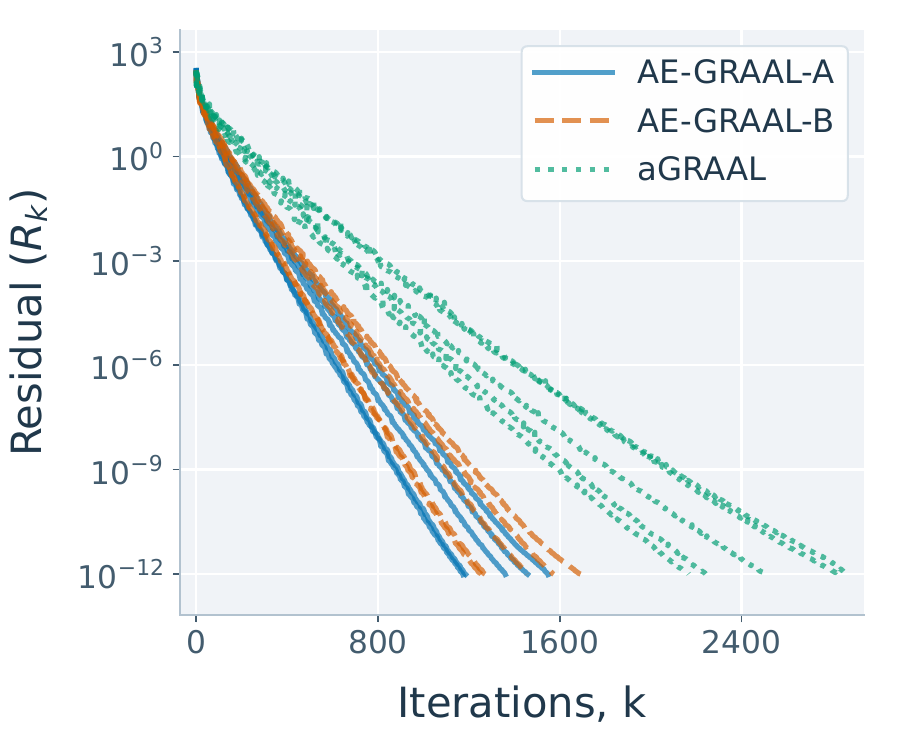}%
    \hfill
    \includegraphics[width=0.3\linewidth]{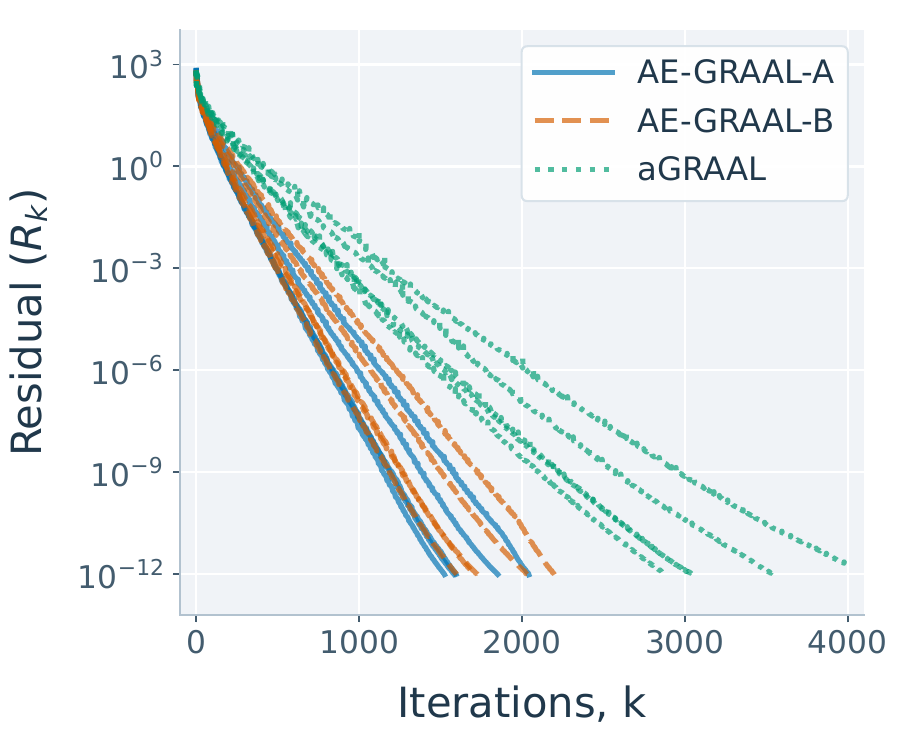}
    \caption{Residual ($R_k$) over iterations for the cubic complementarity problem with $n=5000$ (left), $n=10000$ (middle),
and $n=50000$ (right).}
    \label{fig:cubic problems}
\end{figure}
\begin{figure}[htbp]
  \centering
  \includegraphics[width=0.32\linewidth]{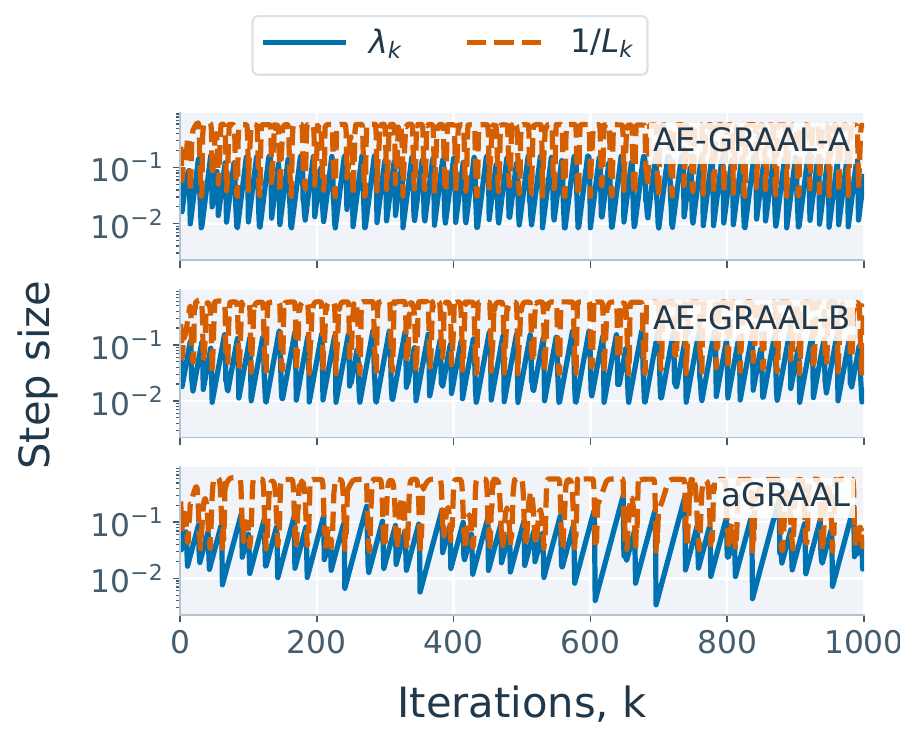}%
  \hspace{1.1cm}
  \includegraphics[width=0.32\linewidth]{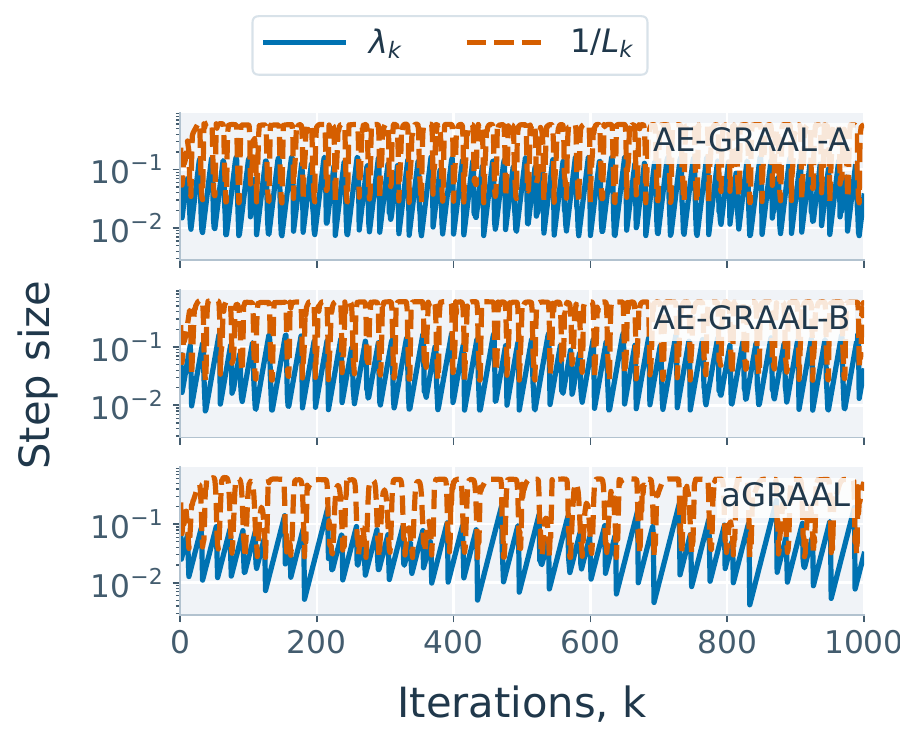}\\
    \includegraphics[width=0.32\linewidth]{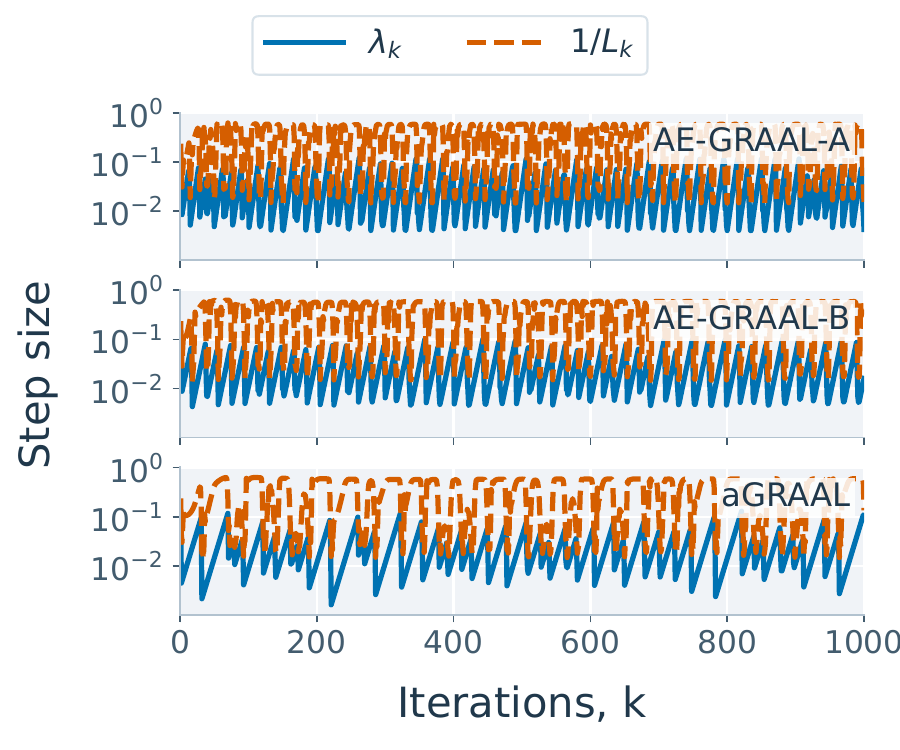}
    \hspace{1.1cm}
     \includegraphics[width=0.32\linewidth]{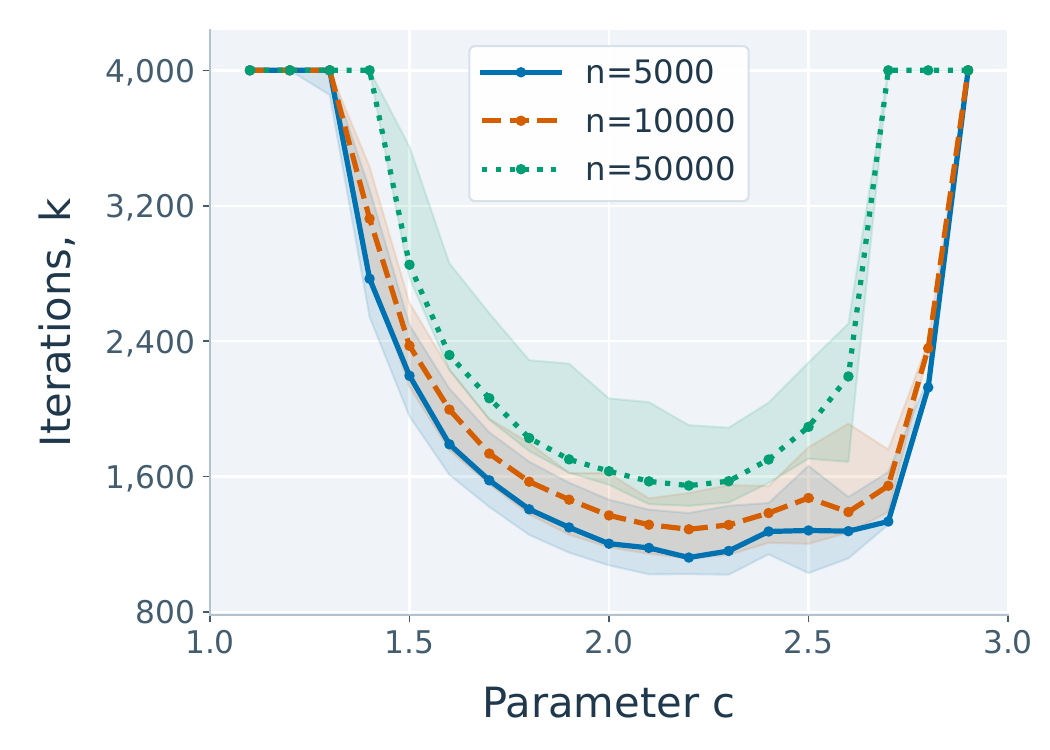}
\caption{Results for the cubic complementarity problem:
comparisons of $\lambda_k$ and $1/L_k$ for the first realisation
(seed $0$) with $n=5000$ (top left), $n=10000$ (top right),
and $n=50000$ (bottom left), and median iteration counts for
AE-GRAAL over $c\in\{1.1,1.2,\ldots,2.9\}$ at these three
dimensions (bottom right). In the bottom right case, each run stops when $R_k\leq10^{-12}$ or $4000$ iterations are reached.}
  \label{fig:cubic_complementarity_steps_all_run}
\end{figure}
The map $F$ is locally Lipschitz, but it is not the gradient of a scalar potential because the Jacobian of $F$ has a nonzero skew-symmetric part. To generate an instance with a known solution, we first draw $v\sim N(0,I_n)$
and set $x^*=[v]_+$, $s=[-v]_+$, and $ b=x^*+(x^*)^{\odot3}+Sx^*-s,$ where $[z]_+:=\bigl(\max\{z_i,0\}\bigr)_{i=1}^n$ is the componentwise positive
part. This choice gives $x^*\geq0$, $s\geq0$, and $x_i^* s_i=0$ for
every $i$, and it is chosen precisely so that $F(x^*)=s$. Hence $x^*$ is a solution of \eqref{eq:cubic reguralisation}. Throughout this experiment, we take $n\in\{5000,10000,50000\}$ and, for each dimension,
run each method five times with seeds $0,\ldots,4$.

\medskip
All algorithms are initialised at $x^1=(1,1,\ldots,1)$ and $x^0=[x^1+10^{-3}\xi]_+$, where
$\xi\sim N(0,I_n)$, and use the initial step size
$\lambda_0=\|x^1-x^0\|_2/\|F(x^1)-F(x^0)\|_2$. As in the previous experiments, we compute the natural residual $R_k$ and run each method until $R_k\le10^{-12}$
holds or a total of $5000$ iterations is reached. All algorithms in Figure~\ref{fig:cubic problems} reached the error threshold before exhausting the maximum number of iterations. Nevertheless, both AE-GRAAL-A and AE-GRAAL-B show faster residual decay than aGRAAL. Furthermore, Figure~\ref{fig:cubic_complementarity_steps_all_run} shows that our step size rule approximates the local inverse Lipschitz constant of $F$ more accurately than aGRAAL. Moreover, one can note from Figure~\ref{fig:cubic_complementarity_steps_all_run} that across all three scenarios, AE-GRAAL requires fewer iterations to reach the prescribed residual when $c$ lies somewhere between $2$ and $2.3$.

\section{Conclusions and future directions}
The results of this paper suggest several directions for further study.
\begin{enumerate}
    \item \textbf{Primal--dual extension.}
    It is natural to investigate whether the present Lyapunov estimates can be adapted to the primal--dual methods discussed in~\cite{chang2025splitting,chang2025primal,chang2026convex,soe2026golden,chang2022grpda,chang2021golden}. Furthermore, the golden ratio-based ADMM algorithms~\cite{chen2023golden,yan2025improved,yin2024golden} may also be revisited in light of the ideas discussed in this paper.

    \item \textbf{Bregman extension.}
    The Bregman extension of aGRAAL in \cite{tam2023bregman} suggests
    replacing the Euclidean proximal step with the one adapted to the geometry of the problem. However, their analysis requires that $F$ is locally Lipschitz~\cite[Theorem 4.1]{tam2023bregman} on the bounded set $\dom g\bigcap\operatorname{int}\dom h$, where $h$ is an appropriate Bregman kernel, to discuss the convergence of the iterates. It would be therefore interesting to
    see whether our adaptive analysis can be adapted to relax this restriction. In particular, it would be interesting to investigate whether a one-step Lyapunov analysis can replace the \cite[Eq. (37)]{tam2023bregman} with an alternating analysis discussed in this paper.

\item \textbf{Inertial methods and preconditioning.}
Adding an inertial term can accelerate the convergence of an optimisation algorithm, as demonstrated in~\cite{polyak1964some,boct2016inertial,moudafi2003convergence,chang2025splitting}. In the context of golden ratio-based algorithms, an interesting preconditioning technique was introduced in~\cite{wang2025new} that removes the dependence on the operator norm. For both algorithms proposed here (Algorithms~\ref{Alg:fixed} and~\ref{Alg:general}), it is therefore natural to ask whether such modifications can be incorporated to obtain methods that are efficient in practice.

    \item \textbf{Revisiting other GRAAL extensions.}
The further developments of GRAAL and aGRAAL published to date could be reconsidered in light of the idea proposed in this paper.

    \item \textbf{Stochastic and distributed methods.}
    A stochastic extension would be useful when exact operator
    evaluations are expensive or only noisy observations are
    available. In a distributed setting, each agent could use
    local observations and communication with its neighbours. New algorithms for distributed optimisation can be developed using the idea proposed in this work when at least one function involved in the objective is locally smooth.
\end{enumerate}

\section*{Acknowledgements}

The research of Santanu Soe was supported by the Prime Minister’s Research Fellowship program (Project number SB23242132MAPMRF005015), Ministry of Education, Government of India, and the Melbourne Research Scholarship. Matthew K. Tam was supported in part by Australian Research Council grant DP230101749.

\section*{Declarations}

\paragraph{Conflicts of interest.}
The authors declare that they have no competing interests
relevant to this work.

\paragraph{Code availability.}
The Python code used for the numerical experiments in this paper
is available at \url{https://github.com/santanusoemath/GRAAL_revisited}.

\section*{Declaration of AI use}

The authors used OpenAI's GPT-5.6 Sol to explore proof strategies, test counterexamples, and assist with generating an initial version of the code for the numerical experiments. The authors verified the mathematical arguments and code and take full responsibility for the content of this manuscript.

\bibliographystyle{abbrv}
\bibliography{GRAAL_revisited_references}

\appendix

\section{Parameter selection for the E-GRAAL algorithm}
\label{app:E-GRAAL_parameter_calculation}

Given \(c>1\), we seek \(\eta\geq0\) for which the
numerator of \(\Theta\) in \eqref{eq:descent_coefficient} and the Young weight $\tau:=\beta-2\lambda L$ are
positive. This implies that  \(\beta=1+\frac1c-\eta>0\). For convenience, we rewrite the numerator of \(\Theta\) in
\eqref{eq:descent_coefficient} as
\begin{equation}
N-2\lambda LD,
\label{eq:fixed_residual_numerator}
\end{equation}
where
\begin{align}
N&:=\beta\left(\eta+\frac2c\right)-1
=\frac{2c+2-c^2}{c^2}+\frac{c-1}{c}\eta-\eta^2,
\label{eq:fixed_N0}\\
D&:=\eta+\frac\beta{c^2}
=\frac{c^2-1}{c^2}\eta+\frac{c+1}{c^3}>0.
\label{eq:fixed_D}
\end{align}
Let $H_c(\eta):=\frac{N}{2D}.$ To obtain a descent inequality, we must have $N>0$; otherwise, \eqref{eq:fixed_residual_numerator} would be negative, and $\Theta$ could not be positive when the Young weight is positive. When \(N>0\), the restriction \(0<\lambda L<H_c(\eta)\) makes the numerator in
\eqref{eq:fixed_residual_numerator} positive. Furthermore, it also guarantees
the positivity of the Young weight. Indeed,
\begin{equation*}
\beta D-N
=\left(1-\frac\beta c\right)^2\geq0.
\end{equation*}
Therefore, \(2\lambda L<\frac{N}{D}\leq\beta\), and hence \(\tau:=\beta-2\lambda L>0\). It remains to maximise \(H_c(\eta)\) over \(\eta\geq0\)
and determine for which \(c>1\) this maximum is positive. Using the quotient rule, we have
\[
H_c'(\eta)=
\frac{\bigl(\frac{c-1}{c}-2\eta\bigr)D
-\frac{c^2-1}{c^2}N}{2D^2}.
\]
Expanding the numerator and then factoring it yields
\begin{align*}
&-\frac{c^2-1}{c^2}\eta^2-\frac{2(c+1)}{c^3}\eta
+\frac{(c^2-1)(c^2-2c-1)}{c^4}\notag\\
&=-\frac{c^2-1}{c^2}
\left(\eta+\frac{c-1}{c}\right)
\left(\eta-\frac{c^2-2c-1}{c(c-1)}\right).
\end{align*}
Consequently,
\begin{equation*}
H_c'(\eta)=
-\frac{c^2-1}{2c^2D^2}
\left(\eta+\frac{c-1}{c}\right)
\left(\eta-\frac{c^2-2c-1}{c(c-1)}\right).
\end{equation*}
For \(\eta\geq0\), all factors except the last have a fixed sign.
If \(1<c\leq1+\sqrt2\), the last factor is nonnegative. Thus
\(H_c\) is nonincreasing and its maximum is attained at \(\eta=0\).
If \(c>1+\sqrt2\), the maximiser is 
$\bar\eta:=\frac{c^2-2c-1}{c(c-1)}.$ This proves the selection of the parameters in \eqref{eq:fixed_eta_choice}. For the first range, substitution of $\eta=0$ in~$H_c(\eta)$ gives
\begin{equation}
H_c(0)=
\frac{(2c+2-c^2)/c^2}{2(c+1)/c^3}
=\frac{c(2c+2-c^2)}{2(c+1)}.
\label{eq:first_branch}
\end{equation}
It is positive on \(1<c\leq1+\sqrt2\), as the positive root
of \(2c+2-c^2\) is \(1+\sqrt3\).
For the second range, putting \(\eta=\bar\eta\) yields
\[
\beta=\frac2{c-1},\qquad
\eta+\frac2c=\frac{c^2-3}{c(c-1)}.
\]
Therefore, from \eqref{eq:fixed_N0} and \eqref{eq:fixed_D}, we obtain
\begin{align*}
N(c,\bar\eta)&=\frac{(3-c)(c-2)(c+1)}{c(c-1)^2}
\quad\text{and}\quad D(c,\bar\eta)=\frac{(c-2)(c+1)}{c^2},
\end{align*}
and
\begin{equation}
H_c(\bar\eta)=\frac{c(3-c)}{2(c-1)^2}.
\label{eq:second_branch}
\end{equation}
Observe that the numerator is positive exactly when \(c<3\). The two expressions agree
at \(c=1+\sqrt2\), where \(\bar\eta=0\).
Together, \eqref{eq:first_branch} and \eqref{eq:second_branch} yield~\eqref{eq:fixed_final_step_size}, and
\(0<\lambda L<\mu(c)\) ensures both
\(\beta>2\lambda L\) and \(\Theta>0\), as required in
Theorem~\ref{thm:fixed}.

\section{Parameter selection for the AE-GRAAL algorithm}
\label{app:local-parameter-calculation}
We justify the parameter selections
\eqref{eq:select_eta_from_the_range}--\eqref{eq:select_sigma_from_the_bound}.
To obtain strict descent from Lemma~\ref{lem:local_energy},
we require \(\beta>2\sigma\) and a positive lower bound for
\(\Delta(\theta)\) on \((0,\rho]\).
Recall from \eqref{eq:local_descent_term_general} and the definitions
following it that \(\beta=1+1/c-\eta\),
\(q(\theta)=\beta\eta+2\beta\theta/c-\theta^2\), and
\(D=\eta+\beta/c^2\). If \(\eta=0\), then
\(\beta=1+1/c\), and the numerator of
\(\Delta(\theta)\) tends to
\(-2\sigma\beta/c^2<0\) as $\theta$ decreases to zero. Thus,
\(\eta>0\) is necessary for this uniform descent estimate, and
\(D=\eta(1-1/c^2)+(c+1)/c^3>0\). Since \(q\) is a concave
quadratic, we have
\begin{equation*}
\inf_{0<\theta\leq\rho}q(\theta)=\min\{q(0),q(\rho)\}.
\end{equation*}
Thus, to get a positive lower bound for $\Delta$, it suffices to choose
\begin{equation}
\min\{q(0),q(\rho)\}>2\sigma D.
\label{eq:endpoint_conditions}
\end{equation}
Now we have to show that \eqref{eq:endpoint_conditions} implies $\tau:=\beta-2\sigma>0$. Note that $q(0)=\beta\eta>2\sigma D>0$. Hence $\beta>0$ together with
\(D=\eta+\beta/c^2>\eta\) gives
\(\beta>\frac{2\sigma D}{\eta}>2\sigma\).
Thus
\[
\Delta(\theta)\ge\delta_*=\frac{\min\{q(0),q(\rho)\}-2\sigma D}{\beta-2\sigma}
>0\quad\text{for}~0<\theta\le \rho,
\]
as used in \eqref{eq:uniform_delta}. It remains to determine for which \(c>1\) and \(\eta>0\) there exist
\(\rho>1\) and \(\sigma>0\) satisfying~\eqref{eq:endpoint_conditions}. By concavity of \(q\), the inequalities \(q(0)>0\),
\(q(\rho)>0\), and \(\rho>1\) imply \(q(1)>0\).
Conversely, if \(q(1)=\beta(\eta+2/c)-1>0\), then \(\beta>0\), and hence
\(q(0)=\beta\eta>0\). Therefore, continuity gives \(q(\rho)>0\)
for some \(\rho>1\), after which a sufficiently small
\(\sigma>0\) satisfies \eqref{eq:endpoint_conditions}. By completing the square, we obtain
\begin{equation*}
q(1)=\frac{3(c+1)(3-c)}{4c^2}
-\left(\eta-\frac{c-1}{2c}\right)^2.
\end{equation*}
For \(1<c<3\), this is positive exactly when
\(\eta_-(c)<\eta<\eta_+(c)\), where
\[
\eta_\pm(c):=
\frac{c-1\pm\sqrt{3(c+1)(3-c)}}{2c}.
\]
Together with \(\eta>0\), this gives
\eqref{eq:select_eta_from_the_range}.  To determine the growth factor $\rho$, let
\begin{equation*}
q(\rho)=-\left(\rho-\frac\beta c\right)^2
+\frac{\beta^2}{c^2}+\beta\eta.
\end{equation*}
The roots of $q$ are $r_\pm=\frac\beta c
\pm\sqrt{\frac{\beta^2}{c^2}+\beta\eta}.$ Since \(\beta\eta>0\), we have \(r_-<0<r_{+}\).
Also, \(q(1)>0\) implies \(r_{-}<1<r_{+}\). The factorisation \(q(\rho)=(r_+-\rho)(\rho-r_-)\) shows that the admissible values \(\rho>1\) form exactly the interval \((1,r_+)\) in \eqref{eq:select_rho_from_the_range}. Finally, choose
\[
0<\sigma<\sigma_{\max}=\frac{\min\{q(0),q(\rho)\}}{2D}.
\]
This is \eqref{eq:select_sigma_from_the_bound} which completes the construction.

\end{document}